\def\on{\bar\rho}
\newtheorem{theorem}{Theorem}[section]
\newtheorem{remark}{Remark}[section]
\newtheorem{lemma}[theorem]{Lemma}
\newtheorem{proposition}[theorem]{Proposition}
\newcommand{\n}{\rho}
\newcommand{\lm}{\lambda}
\newcommand{\ltwo}{_{L^2}^2}
\renewcommand{\div}{ {\rm div }  }
\newcommand{\pa}{\partial}
\renewcommand{\r}{\mathbb{R}}
\newcommand{\ia}{\int_0^T}
\newcommand{\bt}{\begin{theorem}}
\newcommand{\bl}{\begin{lemma}}
\newcommand{\el}{\end{lemma}}
\newcommand{\et}{\end{theorem}}
\newcommand{\ga}{\gamma}
\newcommand{\curl}{{\rm curl} }
\newcommand{\de}{\delta}
\newcommand{\ve}{\varepsilon}
\newcommand{\ol}{\overline}
\newcommand{\bn}{\begin{eqnarray}}
\newcommand{\en}{\end{eqnarray}}
\newcommand{\bnn}{\begin{eqnarray*}}
\newcommand{\enn}{\end{eqnarray*}}
\newcommand{\bnnn}{\begin{eqnarray*}}
\newcommand{\ennn}{\end{eqnarray*}}
\newcommand{\ba}{\begin{aligned}}
\newcommand{\ea}{\end{aligned}}
\newcommand{\be}{\begin{equation}}
\newcommand{\ee}{\end{equation}}
\def\O{{\Omega }}
\def\norm[#1]#2{\|#2\|_{#1}}
\newcommand{\si}{\sigma}
\def\na{\nabla}
\def\on{\bar\n}
\def\QEDopen{{\setlength{\fboxsep}{0pt}\setlength{\fboxrule}{0.2pt}\fbox{\rule[0pt]{0pt}{1.3ex}\rule[0pt]{1.3ex}{0pt}}}} 
\def\QED{\QEDopen} 
\def\endproof{\hspace*{\fill}~\QED\par\endtrivlist\unskip}
\title{Global Strong Solutions to the Compressible Magnetohydrodynamic Equations with Slip Boundary Conditions in 3D Bounded Domains}
\author{Yazhou C{\small HEN}, Bin H{\small UANG}, Xiaoding S{\small HI}   \\[3mm]
{\normalsize   College of Mathematics and Physics, }\\ {\normalsize  Beijing University of Chemical Technology, Beijing 100029, P. R. China} }
\date{ }
\begin{document}
\maketitle

\begin{abstract}
We deal with  the barotropic compressible magnetohydrodynamic equations in three-dimensional (3D) bounded domain with slip boundary condition and vacuum. By a series of a priori estimates, especially the boundary estimates, we prove the global well-posedness of classical solution and the exponential decay rate to the initial-boundary-value problem of this system for the regular initial data with small energy but possibly large oscillations. The initial density of such a classical solution is allowed to contain vacuum states. Moreover, it is also shown that the oscillation of the density will grow unboundedly with an exponential rate when the initial state contains vacuum.
\end{abstract}

\textbf{Keywords:} compressible magnetohydrodynamic equations;  global classical existence; priori estimates; slip boundary condition; vacuum.

\textbf{AMS subject classifications:} 35Q55, 35K65, 76N10, 76W05

\section{Introduction}
In this paper, we consider the viscous compressible magnetohydrodynamic (MHD) equations for barotropic flows in a domain $\Omega\subset\r^{3}$, which can be written as
\begin{equation}\label{CMHD}
\begin{cases}
\rho_t+ \mathop{\mathrm{div}}\nolimits(\rho u)=0,\\
(\rho u)_t+\mathop{\mathrm{div}}\nolimits(\rho u\otimes u)+\nabla P
=\mu \Delta u+(\mu+\lambda)\nabla \mathop{\mathrm{div}}\nolimits u +(\nabla\times H)\times H,\\
H_t -\nabla \times (u \times H)=-\nu \nabla \times (\nabla \times H),
\\
\mathop{\mathrm{div}}\nolimits H=0,
\end{cases}
\end{equation}
where $(x,t)\in\Omega\times (0,T]$, $t\geq 0$ is time, and $x=(x_1,x_2,x_3)$ is the spatial coordinate. The unknown functions $\rho, u=(u^1,u^2,u^3), P=P(\rho),$ and $H=(H^1,H^2,H^3)$ denote the fluid density, velocity, pressure and magnetic field, respectively. Here we consider the barotropic flows with $\gamma$-law pressure $P(\rho)=a\rho^{\gamma},$ where $a>0$ and $\gamma >1$ are some physical parameters.
The constants $\mu$ and $\lambda$ are the shear viscosity and bulk coefficients respectively satisfying the following physical restrictions
$\mu>0$ and $2\mu +{3}\lambda\geq 0$.
The constant $\nu >0$ is the resistivity coefficient which is inversely proportional to the electrical conductivity constant and acts as the magnetic diffusivity of magnetic fields.
In addition, the system is solved subject to the given initial data
\begin{equation}\label{initial}
\displaystyle  \rho(x,0)=\rho_0(x), \quad \rho u(x,0)=\rho_0 u_0(x),\quad H(x,0)=H_0(x),\quad x\in \Omega,
\end{equation}
and slip boundary conditions
\begin{align}
& u\cdot n=0,\,\,\,\curl u\times n=0, &\text{on} \,\,\,\partial\Omega, \label{navier-b}\\
& H \cdot n=0,\,\,\,\curl H\times n=0,  &\text{on} \,\,\,\partial\Omega,\label{boundary}
\end{align}
where $n$ is the unit outward normal vector to $\partial \Omega$.

The choice of boundary conditions is very important for hydrodynamics.
For the velocity field, one of the well-accepted choices is the no-slip boundary condition (i.e. Dirichlet boundary condition), which has been successfully applied to many hydrodynamical problems on the macroscopic scale.
However, experimental studies \cite{zg2001} reveal that for flows on the micro-andnanoscale, the empirical non-slip boundary condition may break down, depending on the interfacial roughness and in-terfacial interactions between solids and fluids.
Another choice is the slip boundary condition, which has different behaviors between the macro-scopic scale and microscale. The earliest slip boundary condition is proposed by Navier in \cite{Nclm1}, which indicates that there is a stagnant layer of fluid close to the wall allowing a fluid to slip and the slip velocity is proportional to the shear stress, that is
\begin{align}\label{Navi2}
\displaystyle  u \cdot n = 0, \,\,(D(u) n)_{\tau}+ \vartheta u_{\tau}=0 \,\,\,\text{on}\,\,\, \partial\Omega,
\end{align}
where $D(u) = (\nabla u+(\nabla u)^*)/2$ is the shear stress, $\vartheta$ is a scalar friction function, subscript $\tau$ denotes the tangential component on $\partial\Omega$.
With the development of micro/nano test technologies and molecular-dynamic simulation technology, the Navier-type slip boundary has been more concerned and well studied in numerical studies and analysis for various fluid mechanical problems,
see, for instance \cite{cl2019,cf1988,Ho3,hbdv3,Itt2,z1998,xx2007,xxw2009} and the references therein.
Additional, as shown in \cite{cl2019,xx2007}, the Navier-slip condition \eqref{Navi2} is written to the following generalized one
\begin{align}\label{navi1}
u\cdot n=0,\,\,\,\curl u\times n=-Au \,\,\,&\text{on} \,\,\,\partial\Omega,
\end{align}
where $A$ is a smooth symmetric matrix defined on $\partial \Omega$, especially when $A=0$, it is strongly related to \eqref{Navi2}. Hence, the boundary condition \eqref{navier-b} presented in this paper can be regarded as a Navier-type slip boundary condition.
For the magnetic field, the boundary condition \eqref{boundary} describes that the boundary $\partial \Omega$ is a perfect conductor (see \cite{fy2009,xxw2009}), that means the magnetic field is confined inside and separated from the exterior.  We also observed that \eqref{boundary} is adaptable to the system since it ensured the boundary balance of the quantities on the boundary.
Therefore, it is appropriate to consider the compressible MHD equations with the boundary conditions \eqref{navier-b}-\eqref{boundary}.

The compressible MHD system \eqref{CMHD} has been attracted a lot of attention of physicists and mathematicians due to its physical importance and mathematical challenges, including the strong coupling and interplay interaction between fluid motion and magnetic field, and significant progress has been made in the analysis of the well-posedness and dynamic behavior to the solutions of the system, see, for example, \cite{cw2002,cw2003,djj2013,df2006,fjn2007,fy2008,fy2009,hhpz,hw2008,hw2008-1,hw2009,hw2010,k1984,lxz2013,liu2015,lvh2015,lsx2016,tg2016,vk1972,wdh2003,xh2017,zjx2009,zz2010,zhu2015} and their references. Among them, we briefly review the results related to well-posedness of solutions for the multi-dimensional compressible MHD equation.
The local existence of strong solutions to the compressible MHD equations was obtained by Vol'pert and Hudjaev \cite{vk1972} for the Cauchy problem with large initial data and the initial density being strictly positive. Fan and Yu \cite{fy2009} extended the result to the case that the initial density may contain vacuum for the whole space or a bounded domain with non-slip boundary condition. Lv and Huang \cite{lvh2015} obtained the local existence of strong and classical solutions in $\r^2$ with vacuum as far field density.
The global existence of solutions to the compressible MHD equations has been studied in many works. Kawashima \cite{k1984} obtained the global existence of smooth solutions to the general electro-magneto-fluid equations in two dimensions when the initial data are small perturbations of a given constant state. Hu and Wang \cite{hw2008,hw2010} and Fan and Yu \cite{fy2008} proved the global existence of renormalized solutions to the compressible MHD equations for general large initial data. Recently, Li, Xu and Zhang \cite{lxz2013} established the global existence and uniqueness of classical solutions with constant state as far field in $\r^3$ with large oscillations and vacuum. Hong, Hou, Peng and Zhu \cite{hhpz} generalized the result for large initial data when $\gamma-1$ and $\nu^{-1}$ are suitably small. Lv, Shi and Xu \cite{lsx2016} got the global existence of unqiue classical solutions in two-dimensional space and obtained some better a priori decay with rates.

However, all of the above results only concern with the whole space or with non-slip boundary conditions.  It is rather complicated to investigate the well-posedness and dynamical behaviors of the compressible MHD system with slip boundary condition due to the compatibility issues of the nonlinear terms with the slip boundary conditions.
Tang and Gao \cite{tg2016} consider the local strong solutions to the compressible MHD equations with initial vacuum, in which the velocity field satisfies the Navier-slip condition. Considering the full compressible MHD system, Xi and Hao \cite{xh2017} proved the local existence of the classical solutions to the initial-boundary value problem with slip boundary condition for the full compressible MHD system without thermal conductivity, where the initial data contains vacuum and satisfies some initial layer compatibility condition.
However, to our best knowledge, whether the strong (classical) solution for general bounded smooth domains $\Omega$ with density containing vacuum initially to the MHD system exists globally in time is still open.
Recently, for the barotropic compressible Navier-Stokes equations in a bounded domain $\Omega$ with slip boundary condition, Cai and Li \cite{cl2019} proved that the classical solution of the initial-boundary-value problem exists globally with vacuum and small energy but possibly large oscillations and adopt some new techniques to obtain necessary a priori estimates, especially the boundary estimates, compared with the work in \cite{hlx1,jx01} for the Cauchy problem of the compressible Navier-Stokes equations. 
The main purpose of this paper is to establish the global well-posedness of classical solutions of the compressible MHD system \eqref{CMHD}-\eqref{boundary} in a bounded domain $\Omega\subset\r^3$.
We would like to obtain the time-independent upper bound of the density and the time-dependent higher-norm estimates of $(\rho,u,H)$ and extend the classical solution globally in time, which is motivated by the works of Cai and Li \cite{cl2019} and Li, Xu and Zhang \cite{lxz2013}.

Before formulating our main result, we first explain the notation and conventions used throughout the paper.
For integer $k\geq 1$ and $1\leq q<+\infty$, We denote the standard Sobolev space by $W^{k,q}(\Omega)$ and $H^k(\Omega)\triangleq W^{k,2}(\Omega)$. 
For some $s\in(0,1)$, the fractional Sobolev space $H^s(\Omega)$ is defined by
$$ H^s(\Omega)\triangleq\left\{u\in L^2(\Omega)~\text{:} \int_{\Omega\times\Omega}\frac{|u(x)-u(y)|^2}{|x-y|^{3+2s}}dxdy<+\infty\right\},\,\,\text{with the norm:}$$
$$\| u\|_{H^s(\Omega)}\triangleq \|u\|_{L^2(\Omega)}+\left(\int_{\Omega\times\Omega}\frac{|u(x)-u(y)|^2}{|x-y|^{3+2s}}dxdy\right)^\frac{1}{2}.$$
For simplicity, we denote $L^q(\Omega)$, $W^{k,q}(\Omega)$, $H^k(\Omega)$ and ${H^s(\Omega)}$ by $L^q$, $W^{k,q}$, $H^k$ and ${H^s}$ respectively, and set
$$\int fdx \triangleq \int_\Omega fdx,\quad \int_0^T\int fdxdt\triangleq\int_0^T\int_\Omega fdxdt. $$
For two $3\times 3$  matrices $A=\{a_{ij}\},\,\,B=\{b_{ij}\}$, the symbol $A\colon  B$ represents the trace of $AB^*$, where $B^*$ is the transpose of $B$, that is,
$$ A\colon  B\triangleq \text{tr} (AB^*)=\sum\limits_{i,j=1}^{3}a_{ij}b_{ij}.$$
Finally, for $v=(v^1,v^2,v^3)$, we denote $\nabla_iv\triangleq(\partial_iv^1,\partial_iv^2,\partial_iv^3)$ for $i=1,2,3,$ and the
material derivative of $v$ by  $\dot v\triangleq v_t+u\cdot\nabla v$.

The initial total energy of \eqref{CMHD} is defined as
\begin{align}\label{c0}
\displaystyle  C_0 =\int_{\Omega}\left(\frac{1}{2}\rho_0|u_0|^2 + G(\rho_0)+\frac{1}{2}|H_0|^2 \right)dx.
\end{align}
where
\begin{align}
\displaystyle  G(\rho)\triangleq\rho\int_{\bar{\rho}}^{\rho}\frac{P(s)-\bar{P}}{s^{2}} ds,\quad\bar{\rho}\triangleq\frac{1}{|\Omega|}\int_{\Omega}\rho_0 dx,\quad
\bar{P}\triangleq P(\bar{\rho}).
\end{align}

Now we can state our main result, Theorem \ref{th1}, concerning existence of global classical solutions to the problem  \eqref{CMHD}-\eqref{boundary}.
\begin{theorem}\label{th1}
Let $\Omega$ be a simply connected bounded domain in $\r^3$ and its smooth boundary $\partial\Omega$ has a finite number of 2-dimensional connected components. For $q\in (3,6)$ and some given constants $M_1,M_2>0$, $s\in (\frac{1}{2},1]$, and $\hat{\rho}\geq\bar{\rho}+1$ , and the initial data $(\rho_0,u_0,H_0)$ satisfy the boundary conditions \eqref{navier-b}-\eqref{boundary} and
\begin{align}
\displaystyle & 0\leq\rho_0\leq\hat{\rho},\quad
(\rho_0,P(\rho_0))\in H^2\cap W^{2,q}, \label{dt1}\\
& u_0\in  H^2 ,\quad H_0 \in  H^2 ,\quad \div H_0=0,\label{dt2}\\
& \|u_0\|_{H^s}\leq M_1,\quad \|H_0\|_{H^s}\leq M_2,\label{dt-s}
\end{align}
and the compatibility condition
\begin{align}\label{dt3}
\displaystyle  -\mu\triangle u_0-(\mu+\lambda)\nabla \mathop{\mathrm{div}}\nolimits u_0 + \nabla P(\rho_0)- (\nabla \times H_0) \times H_0 = \rho_0^{1/2}g,
\end{align}
for some  $ g\in L^2.$
Then there exists a positive constant $\ve$ depending only on  $\mu$, $\lambda$, $\nu$, $\ga$, $a$, $\on$, $\hat{\rho}$, $s$, $\Omega$, $M_1$ and $M_2$  such that the system \eqref{CMHD}-\eqref{boundary} has a unique global classical solution $(\rho,u,H)$ in $\Omega\times(0,\infty)$ satisfying
\begin{align}\label{esti-rho}
\displaystyle  0\le \rho(x,t)\le 2\hat{\rho},\quad  (x,t)\in \Omega\times(0,\infty),
\end{align}
\begin{equation}\label{esti-uh}
\begin{cases}
(\rho-\bar{\rho},P-\bar{P})\in C([0,\infty);H^2 \cap W^{2,q} ),\\
\nabla u\in C([0,\infty);H^1 )\cap  L^\infty_{\rm loc}(0,\infty;H^2\cap W^{2,q}),\\
u_t\in L^{\infty}_{\rm loc}(0,\infty; H^2)\cap H^1_{\rm loc}(0,\infty; H^1),\\
H \in C([0,\infty);H^2)\cap  L^\infty_{\rm loc}(0,\infty; H^4),\\
H_t\in C([0,\infty);L^2)\cap H^1_{\rm loc}(0,\infty; H^1)\cap L^\infty_{\rm loc}(0,\infty; H^2),	
\end{cases}
\end{equation}
provided the initial total energy $C_0\leq\ve$.
Moreover,  for any $r\in [1,\infty)$ and $p\in [1,6],$ there exist positive constants $C$ and $\eta_0$ depending only  on $\mu,$  $\lambda,$ $\nu,$  $\gamma,$ $a$, $s$, $\bar{\rho}$, $\hat{\rho}$, $\Omega$,   $M_1, M_2,$  $r$ and $p$  such that for $t>0,$
\begin{align}\label{esti-t}
\displaystyle  \|\rho(\cdot,t)-\bar\rho\|_{L^r}+\|u(\cdot,t)\|_{W^{1,p}} +\|(\sqrt{\rho}\dot{u})(\cdot,t)\|^2_{L^2}+\|H(\cdot,t)\|_{H^2}\leq Ce^{-\eta_0 t}.
\end{align}
\end{theorem}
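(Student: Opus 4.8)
The proof rests on combining a local existence theorem for the initial-boundary-value problem with a global continuity (bootstrap) argument driven by uniform-in-time a priori estimates; this is the strategy of Cai--Li \cite{cl2019} for Navier--Stokes, adapted to accommodate the magnetic coupling in the spirit of Li--Xu--Zhang \cite{lxz2013}. First I would establish local existence and uniqueness of a classical solution on a maximal interval $[0,T^*)$. The heart of the matter is then to show that, under $C_0\le\ve$, all the norms in \eqref{esti-uh} stay bounded independently of $T<T^*$, so the local solution extends to $[0,\infty)$. To organize this I would posit a priori bounds of the form $\sup_{0\le t\le T}\|\rho\|_{L^\infty}\le 2\hat\rho$ together with integral quantities controlling $\int_0^T(\|\nabla u\|_{L^2}^2+\|\sqrt{\rho}\dot u\|_{L^2}^2+\|\nabla H\|_{L^2}^2+\cdots)\,dt$, and aim to recover strictly better bounds (e.g.\ $\rho\le\tfrac32\hat\rho$), closing the loop by the continuity method.

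For the energy layer, the first step is the basic energy identity from testing the momentum equation by $u$ and the magnetic equation by $H$: using $(\curl H)\times H=(H\cdot\na)H-\tfrac12\na|H|^2$ and the boundary conditions \eqref{navier-b}--\eqref{boundary}, the pressure-work, Lorentz-force and induction terms combine so that the total energy $E(t)=\int(\tfrac12\rho|u|^2+G(\rho)+\tfrac12|H|^2)\,dx$ obeys $\tfrac{d}{dt}E(t)+\mu\|\na u\|_{L^2}^2+(\mu+\lambda)\|\div u\|_{L^2}^2+\nu\|\curl H\|_{L^2}^2\le 0$ modulo boundary curvature terms. The crucial tool throughout is the effective viscous flux $F\triangleq(2\mu+\lambda)\div u-(P-\bar P)-\tfrac12|H|^2$, which satisfies $\Delta F=\div\!\big(\rho\dot u-(H\cdot\na)H\big)$ and enjoys good $L^p$ estimates, together with the curl-div bound $\|\na u\|_{L^p}\le C(\|\div u\|_{L^p}+\|\curl u\|_{L^p})$ valid under the slip conditions. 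Testing the momentum equation by $\dot u$ (using $\curl u\times n=0$ to convert $\int\Delta u\cdot\dot u$ into controllable div/curl boundary expressions) yields the first-order estimate for $\|\sqrt{\rho}\dot u\|_{L^2}^2$ and $\tfrac{d}{dt}(\mu\|\na u\|_{L^2}^2+\cdots)$; the magnetic contributions are absorbed using parallel estimates for $\|\na H\|_{L^2}^2$ and $\|H_t\|_{L^2}^2$ obtained from $\dot H=(H\cdot\na)u-H\div u+\nu\Delta H$.

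With these in hand, the time-independent density bound comes from writing the continuity equation along trajectories through the flux, $(2\mu+\lambda)\tfrac{D}{Dt}\log\rho=-F-(P-\bar P)-\tfrac12|H|^2$, and applying a Zlotnik-type ODE lemma to conclude $\rho\le\tfrac32\hat\rho$ once $C_0$ is small, which improves the a priori density assumption and closes the bootstrap. Propagating the higher regularity in \eqref{esti-uh} then proceeds by differentiating the equations in $t$, estimating $\|\sqrt{\rho}\,u_t\|$, $\|\na u_t\|$, $\|H_t\|_{H^1}$, and invoking the compatibility condition \eqref{dt3} to bound $\sqrt{\rho}\dot u$ at $t=0$; the $L^\infty(0,\infty;H^2\cap W^{2,q})$ bounds for $\na u$ and the $H^4$ bound for $H$ follow from elliptic and curl-div regularity applied to $F$, $\curl u$ and $H$. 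Finally, the exponential decay \eqref{esti-t} is derived by upgrading the integral estimates to differential inequalities of the form $\tfrac{d}{dt}\mathcal E+\eta_0\mathcal E\le 0$ for suitably weighted energies $\mathcal E$, with Poincar\'e's inequality converting the dissipation into control of $\mathcal E$ itself, then bootstrapping the decay to the higher norms.

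The main obstacle, as in the slip-boundary Navier--Stokes analysis, will be the boundary estimates. The identities underlying $\int_\Omega\Delta u\cdot\na\div u$ and $\int_\Omega\Delta u\cdot\dot u$, together with their magnetic analogues, all generate integrals over $\partial\Omega$ that do not vanish for Navier-type conditions; after using $u\cdot n=0$, $\curl u\times n=0$, $H\cdot n=0$, $\curl H\times n=0$ they reduce to expressions involving the second fundamental form of $\partial\Omega$, which must be controlled by interior norms without exhausting the smallness budget. Bounding these curvature terms simultaneously for $u$ and $H$, and checking that the Lorentz-force and induction couplings are compatible with the perfect-conductor condition \eqref{boundary} so that no uncontrolled boundary contribution survives, is where the genuinely new work beyond \cite{cl2019} and \cite{lxz2013} lies.
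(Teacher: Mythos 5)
Your overall architecture is exactly the paper's: local existence plus a continuity/bootstrap argument, the basic energy identity, the effective viscous flux $F=(2\mu+\lambda)\div u-(P-\bar P)-\tfrac12|H|^2$ with $\Delta F=\div(\rho\dot u-H\cdot\nabla H)$, the div--curl bound $\|\nabla u\|_{L^q}\le C(\|\div u\|_{L^q}+\|\curl u\|_{L^q})$ under the slip conditions, the observation $u\cdot\nabla u\cdot n=-u\cdot\nabla n\cdot u$ to tame the boundary integrals arising from $\int\nabla\div u\cdot\dot u$, Zlotnik's lemma for the time-independent density bound, and time-differentiated estimates plus the compatibility condition for the higher regularity and the decay. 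One small correction: with the boundary conditions \eqref{navier-b}--\eqref{boundary} the energy identity is \emph{exact}, with dissipation $(\lambda+2\mu)\|\div u\|_{L^2}^2+\mu\|\curl u\|_{L^2}^2+\nu\|\curl H\|_{L^2}^2$ and no boundary remainder (cf.\ \eqref{m2}); writing the dissipation as $\mu\|\nabla u\|_{L^2}^2$ ``modulo boundary curvature terms'' is the wrong normal form here and would needlessly spend smallness.

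The genuine gap is that nowhere in your sketch does the hypothesis \eqref{dt-s}, i.e.\ $\|u_0\|_{H^s}\le M_1$, $\|H_0\|_{H^s}\le M_2$ with $s\in(\tfrac12,1]$, enter, and without it the bootstrap cannot be closed on the initial layer $[0,\sigma(T)]$. Only $C_0$ is assumed small, while $\|\nabla u_0\|_{L^2}$ and $\|\nabla H_0\|_{L^2}$ may be arbitrarily large, so the quantities you list (the $\sigma$-weighted $\|\nabla u\|_{L^2}^2$ and the time integrals of $\rho|\dot u|^2$, etc.) do not by themselves control $\int_0^{\sigma(T)}\sigma^2\|\nabla u\|_{L^4}^4\,dt$ or the oscillation of $b(t)$ in the Zlotnik argument on $[0,\sigma(T)]$ in terms of a power of $C_0$. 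The paper closes this by introducing the extra bootstrap quantities $A_3,A_4,A_5$ (the $L^3$-norm of $H$, the $t^{(3-2s)/4}$-weighted energy, and $\int\rho|u|^3$), and by proving $t^{1-s}$- and $t^{2-s}$-weighted estimates on the initial layer via the decomposition $u=w_1+w_2+w_3$ into solutions of linear Lam\'e-type problems (homogeneous data, pressure forcing, Lorentz forcing) together with the Stein--Weiss interpolation argument between the $L^2$-data and $H^1$-data estimates to reach $H^s$ data; the analogous interpolation is also needed for $H$ in \eqref{tdh-s}. This initial-layer mechanism — where $M_1$, $M_2$ and the exponent $\delta_0=\tfrac{2s-1}{9s}$ actually appear — is the one essential ingredient your proposal is missing; the rest is a faithful outline of the paper's proof.
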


Then, thanks to the exponential decay rate \eqref{esti-t}, taking the similar procedure as in \cite{cl2019,lx}, we can directly deduce the following large-time behavior of the gradient of the density when the initial density contains vacuum state.
\begin{theorem}\label{th2}
Under the conditions of Theorem \ref{th1}, assume further  that there exists some point $x_0\in \Omega$ such that $\rho_0(x_0)=0.$  Then the unique global classical solution $(\rho,u,H)$ to the problem \eqref{CMHD}-\eqref{boundary} obtained in
Theorem \ref{th1}  satisfies that for any $r_1>3,$   there exist positive constants $\tilde{C}_1$ and $\tilde{C}_2$ depending only  on $\mu$,  $\lambda$, $\nu$, $\gamma$, $a$, $s$, $\bar{\rho}$, $\hat{\rho}$,  $\Omega$, $M_1$, $M_2$ and  $r_1$   such that for any $t>0$,
\begin{align}\label{esti-2}
\displaystyle \|\nabla\rho (\cdot,t)\|_{L^{r_1}}\geq \tilde{C}_1 e^{\tilde{C}_2 t} .
\end{align}
\end{theorem}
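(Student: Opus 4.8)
The plan is to pit two facts from Theorem \ref{th1} against each other: the density remains \emph{exactly} zero along the particle trajectory issuing from $x_0$, while \eqref{esti-t} forces $\rho$ to be $L^r$-close to $\bar\rho>0$ for large $t$. A persistent vacuum point together with a small $\|\nabla\rho\|_{L^{r_1}}$ would spread the vacuum over a macroscopic ball, contradicting the decay; quantifying this incompatibility gives the exponential lower bound directly, without arguing by contradiction.

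First I would introduce the flow map $X(t;x)$ solving $\frac{d}{dt}X(t;x)=u(X(t;x),t)$, $X(0;x)=x$. Since \eqref{esti-uh} gives $u\in L^\infty_{\rm loc}(0,\infty;W^{2,q})$ with $q>3$, each $u(\cdot,t)$ is $C^1$ and spatially Lipschitz, so the trajectories are uniquely defined, and $u\cdot n=0$ on $\partial\Omega$ makes $X(t;\cdot)$ a homeomorphism of $\bar\Omega$ onto itself. Writing the continuity equation in \eqref{CMHD} as $\dot\rho=-\rho\,\div u$ and using $\frac{d}{dt}J=(\div u)\,J$ for the Jacobian $J(t;x)=\det\nabla_xX(t;x)$, one obtains the per-trajectory identity $\rho(X(t;x),t)\,J(t;x)=\rho_0(x)$. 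Because $\div u$ is bounded on finite time intervals, $0<J(t;x)<\infty$, so $\rho_0(x_0)=0$ propagates to
\[
\rho(x_t,t)=0\quad\text{for all }t>0,\qquad x_t:=X(t;x_0)\in\bar\Omega .
\]

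Next, since $r_1>3$ and $\rho(\cdot,t)-\bar\rho\in W^{2,q}\subset W^{1,r_1}$, Morrey's inequality (applied on a fixed ball, with scale-invariant constant) gives, for $x$ near $x_t$,
\[
\rho(x,t)=|\rho(x,t)-\rho(x_t,t)|\le C\,N(t)\,|x-x_t|^{1-3/r_1},\qquad N(t):=\|\nabla\rho(\cdot,t)\|_{L^{r_1}} .
\]
Choosing $R_*(t)\sim N(t)^{-1/(1-3/r_1)}$ so that $\rho(\cdot,t)\le\bar\rho/2$ on $B(x_t,R_*(t))$, one has $|\rho-\bar\rho|\ge\bar\rho/2$ there, and since $|B(x_t,R)\cap\Omega|\ge cR^3$ for small $R$ (valid even if $x_t\in\partial\Omega$, by smoothness of $\partial\Omega$), integration yields
\[
\|\rho(\cdot,t)-\bar\rho\|_{L^r}\ge c\,R_*(t)^{3/r}\ge c'\,N(t)^{-\frac{3}{r(1-3/r_1)}} .
\]
Comparing with the upper bound $\|\rho(\cdot,t)-\bar\rho\|_{L^r}\le Ce^{-\eta_0t}$ from \eqref{esti-t} and solving for $N(t)$ gives $N(t)\ge\tilde C_1\,e^{\tilde C_2 t}$ with $\tilde C_2=\eta_0\,r(1-3/r_1)/3>0$, which is exactly \eqref{esti-2} (here $r\ge1$ is any fixed exponent, e.g.\ $r=1$).

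The main obstacle is the rigorous justification of the first step: that the flow map is globally well defined, that the mass identity $\rho(X(t;x),t)J(t;x)=\rho_0(x)$ holds classically, and that the vacuum point $x_t$ stays in $\bar\Omega$ so the local measure bound $|B(x_t,R)\cap\Omega|\gtrsim R^3$ is available uniformly in $t$. All of this rests on the uniform-in-time control of $\div u$ and the spatial Lipschitz bound on $u$ furnished by \eqref{esti-uh}; once these are secured, the remaining estimates are elementary. A secondary point to verify is that the Morrey constant may be taken independent of the ball radius (it is, by scaling), so that $R_*(t)$ can be chosen freely as $N(t)\to\infty$.
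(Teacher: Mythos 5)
Your argument is correct and is essentially the proof the paper invokes (via \cite{cl2019,lx}): the vacuum persists along the particle path issuing from $x_0$, and the pointwise value $\rho(x_t,t)=0$ is played off against the exponential decay \eqref{esti-t} of $\|\rho-\bar\rho\|_{L^r}$ through an interpolation controlled by $\|\nabla\rho\|_{L^{r_1}}$ --- the cited proof applies the Gagliardo--Nirenberg inequality \eqref{g2} directly to $\rho-\bar\rho$, whereas you use Morrey's inequality plus an explicit ball-volume bound, which is the same multiplicative interpolation $\bar\rho=|(\rho-\bar\rho)(x_t,t)|\le C\|\rho-\bar\rho\|_{L^r}^{\theta}\|\nabla\rho\|_{L^{r_1}}^{1-\theta}$ in different clothing. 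The only point to add is the regime of bounded $t$, where your comparison yields nothing if $R_*(t)$ exceeds $\mathrm{diam}\,\Omega$; there one observes that $t\mapsto\|\nabla\rho(\cdot,t)\|_{L^{r_1}}$ is continuous (by \eqref{esti-uh}$_1$ and $W^{2,q}\hookrightarrow W^{1,r_1}$) and strictly positive (since $\rho(\cdot,t)$ vanishes at $x_t$ but has mean $\bar\rho>0$), hence bounded below on compact time intervals, so $\tilde C_1$ may be shrunk to cover small times.
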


\begin{remark}\label{rem:1} From Sobolev's inequality and \eqref{esti-uh}$_1$ with $q>3$, it follows that 
\begin{equation}
\displaystyle \rho, \nabla \rho \in C(\bar\Omega\times [0,T]). 
\end{equation}
Moreover, it also follows from \eqref{esti-uh}$_{2-5}$  that  
\begin{equation}
\displaystyle  u, H, \nabla u, \nabla H, \nabla^2 u,\nabla^2 H, u_t, H_t \in C(\bar\Omega\times [\tau,T]), 
\end{equation}
due to the following simple fact that $$L^2(\tau,T;H^1)\cap H^1(\tau,T;H^{-1})\hookrightarrow C([\tau,T];L^2).$$
Finally, by \eqref{CMHD}$_1,$ we have 
\begin{equation}
\displaystyle \rho_t=-u\cdot \nabla \rho-\rho\div u\in C(\bar\Omega\times [\tau,T]). 
\end{equation}
Hence the solution obtained in Theorem \ref{th1} becomes a classical one away
from the initial time.
\end{remark}

\begin{remark}\label{rem:2}
When we consider the general slip boundary \eqref{navi1} for the velocity field, and assume that the matrix $A$ is smooth and positive semi-definite, and even if the restriction on $A$ is relaxed to $A\in H^3$ and the negative eigenvalues of $A$ (if exist) are small enough, Theorem \ref{th1} will still hold. This can be achieved by a similar way as in \cite{cl2019}.
When $H = 0$, i.e., there is no electromagnetic field effect, the compressible MHD system \eqref{CMHD} turns to be the compressible Navier-Stokes equations, and Theorem \ref{th1} is the same as the result of Cai and Li \cite{cl2019}. Roughly speaking, we generalize the results of \cite{cl2019} to the compressible MHD equations.
\end{remark}

\begin{remark}\label{rem:3} When the initial state contains vacuum,
Theorem \ref{th2} implies that the oscillation of the density will grow unboundedly with an exponential rate, which is somewhat surprisingly compared with the Cauchy problem  \cite{lxz2013} where there is no results concerning the growth rate of the gradient of the density.
\end{remark}

\begin{remark}\label{rem:4}
In our case, $\Omega$ is a bounded domain in $\r^3$, for the small initial energy, we need the boundedness assumptions on the $H^s$-norm, for $s\in (1/2,1]$, of the initial velocity and magnetic field, which is analogous to the $\dot{H}^\beta$-norm in the whole space case \cite{lxz2013,jx01}. Thus, compared with the results in \cite{k1984}, the conditions on the initial velocity may be optimal under the smallness conditions on the initial energy.
\end{remark}

We now sketch the main idea used in the proof of Theorem \ref{th1}. Similar to the argument in \cite{cl2019,lxz2013}, the key issue in our proof is to derive the time-independent upper bound of the density in Proposition \ref{pr1} and the time-dependent higher-norm estimates of $(\rho,u,H)$. It is worth pointing out that the effective viscous flux $F$ and the vorticity $\omega$ (see \eqref{flux} for the definition) play an important role in the proof.
However, unlike \cite{lxz2013}, it can not get the $L^p$-norm ($2\leq p\leq 6$) of $\nabla u$ by the standard elliptic estimate, due to the bounded domain with the slip boundary condition \eqref{navier-b}. To deal with this difficulty, we consider the Petrovsky type Lam\'{e}'s system \eqref{lame1} (see \cite{adn}) and obtain the estimates of $W^{k,q}$-norm to the slip boundary condition (see Lemma \ref{lem-lame}). Thanks to \cite{ar2014,vww}, Lemma \ref{lem-vn} shows that the inequality $\|\nabla u\|_{L^q}\leq C(\|\div u\|_{L^q}+\|\curl u\|_{L^q})\,\,\,\text{for any} \,\,\,q>1$ holds  for $u\in W^{1,q}$ with $u\cdot n=0$ on $\partial\Omega$. These fact allows us to control $\nabla u$ by means of $\div u$ and $\curl u$. For the magnetic field, with the help of the magnetic diffusivity structure, we obtain the estimates of $\curl H$ and $\curl^2 H$ to control $\nabla H$ and $\nabla^2 H$, which can be used to deal with the strong coupling and interplay interaction between the fluid motion and the magnetic field, such as the magnetic force $(\nabla \times H)\times H$ and the convection term $\nabla \times (u\times H)$.
In addition, the slip boundary also makes the time-independent estimates of $A_1(T)$ and $A_2(T)$ more difficult. Our observation is that due to the boundary condition $u\cdot n=0$, which yields $u\cdot\nabla u\cdot n=-u\cdot\nabla n\cdot u$. This equality is the key to estimate the integrals on the boundary $\partial\Omega$ and we obtain the estimate of $\dot{u}$ and $\nabla \dot{u}$ (see \eqref{udot} and \eqref{tdudot}).

The rest of the paper is organized as follows.
In Section 2, we derive the elementary energy estimates for the system \eqref{CMHD}-\eqref{boundary} and some key a priori estimates.
Section 3 and Section 4 are devoted to deriving the necessary time-independent lower-order estimates and time-dependent higher-order estimates, which can guarantee the local classical solution to be a global classical one.
In Section 5, the proof of Theorem \ref{th1} will be completed.
In Appendix \ref{appendix-a}, we list some elementary inequalities and important lemmas that we use intensively in the paper.

\section{Preliminaries}\label{se2}
In this section, we derive the elementary energy estimates for the system \eqref{CMHD}-\eqref{boundary} and some key a priori estimates. Let $T>0$ be a fixed time and $(\rho,u,H)$ be a smooth solution to \eqref{CMHD}-\eqref{boundary} on $\Omega \times (0,T]$.
For $H$ and $u$ sufficiently smooth, there are some formulas based on $\div H =0$:
\begin{equation}\label{divh}
\begin{cases}
(\nabla\times H)\times H=\div(H\otimes H-\frac{1}{2}|H|^2I_{3})=H \cdot \nabla H-\frac{1}{2}\nabla|H|^2,\\		
\nabla\times (u\times H)=(H\cdot\nabla)u-(u\cdot\nabla)H-H\div u.
\end{cases}
\end{equation}
Then we rewrite \eqref{CMHD} in the following form:
\begin{equation}\label{CMHD1}
\begin{cases}
\rho_t+ \mathop{\mathrm{div}}\nolimits(\rho u)=0,\\
\rho u_t+\rho u \cdot \nabla u - (\lambda\! +\! 2\mu)\nabla\div u+\mu\nabla\!\times\!\omega + \nabla(P\!-\!\bar{P})=H \!\cdot\! \nabla H+\nabla \frac{|H|^2}{2},\\
H_t+u \cdot \nabla H- H \cdot \nabla u+ H \div u= -\nu \nabla \times \curl H,
\\
\mathop{\mathrm{div}}\nolimits H=0,
\end{cases}
\end{equation}
where we used the fact $-\Delta u=-\nabla\div u+\nabla\times\omega$ and $\omega \triangleq \nabla\times u, \curl H \triangleq \nabla\times H$.
Multiplying $\eqref{CMHD1}_1 $ by $G'(\rho)$, $\eqref{CMHD1}_2$ by $u$ and $\eqref{CMHD1}_3$ by $H$ respectively, integrating by parts over $\Omega$, summing them up, by \eqref{navier-b} and \eqref{boundary}, we have
\begin{align}\label{m2}
\displaystyle  &\left(\int \Big(G(\rho)+\frac{1}{2}\rho |u|^{2}+\frac{1}{2}|H|^{2}\Big)dx\right)_t + (\lambda + 2\mu)\int(\div u)^{2}dx \nonumber \\
 & + \mu\int|\omega|^{2}dx 
 + \nu\int|\curl H|^{2}dx =0,
\end{align}
which, integrated over $(0,T)$, leads to the following elementary energy estimates.
\begin{lemma}\label{lem-basic}
 Let $(\rho,u,H)$ be a smooth solution of \eqref{CMHD}-\eqref{boundary} on $\O \times (0,T]$. Then 
\begin{align}
\displaystyle  &\sup_{0\le t\le T}
\left(\frac{1}{2}\|\rho^{\frac{1}{2}}u\|_{L^2}^2+\|G(\rho)\|_{L^1}+\frac{1}{2}\|H\|_{L^2}^2\right)
\nonumber \\
 &+ \int_0^{T}(\lambda+2\mu)\|\div u\|_{L^2}^2 +\mu\|\omega\|_{L^2}^2+\nu \|\curl H\|_{L^2}^2)dt \le C_0.\label{basic1}
\end{align}
\end{lemma}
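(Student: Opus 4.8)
The plan is to read \eqref{basic1} as the time-integrated form of the pointwise-in-time energy identity \eqref{m2}, so that the whole statement follows from that identity together with two elementary positivity facts. I would split the argument into two stages: justify \eqref{m2}, then integrate it in time.

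For the first stage I would multiply $\eqref{CMHD1}_1$ by $G'(\rho)$, $\eqref{CMHD1}_2$ by $u$ and $\eqref{CMHD1}_3$ by $H$, integrate over $\O$, and sum. The convective contributions recombine with the continuity equation to produce the exact time derivatives $\frac{d}{dt}\int G(\rho)\,dx$ and $\frac{d}{dt}\frac12\int\rho|u|^2\,dx$, while each spatial integration by parts deposits a boundary integral on $\pa\O$. The one genuinely delicate point — and what I expect to be the main obstacle — is showing that every such boundary integral vanishes: the diffusion term $\mu\int\na\times\o\cdot u$ yields $\mu\int|\o|^2+\mu\int_{\pa\O}(\o\times n)\cdot u$, whose boundary piece drops by $\curl u\times n=0$ in \eqref{navier-b}; the terms $(\lambda+2\mu)\na\div u$ and $\na(P-\bar P)$ carry boundary factors $u\cdot n=0$; and the resistive term $\nu\int\na\times\curl H\cdot H$ produces $\nu\int|\curl H|^2$ with boundary piece killed by $\curl H\times n=0$ from \eqref{boundary}. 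At the same time I would verify that the Lorentz work $\int(H\cdot\na H+\na\frac{|H|^2}{2})\cdot u$ cancels the coupling terms $\int(u\cdot\na H-H\cdot\na u+H\div u)\cdot H$ coming from the induction equation, a cancellation that relies on $\div H=0$ and again on $u\cdot n=0$. This yields exactly \eqref{m2}.

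For the second stage I would integrate \eqref{m2} over $(0,t)$ for arbitrary $t\in(0,T]$ and use the definition \eqref{c0} of $C_0$ to identify the energy at the initial time, obtaining
\begin{align*}
&\int\Big(G(\rho)+\tfrac12\rho|u|^2+\tfrac12|H|^2\Big)(\cdot,t)\,dx\\
&\quad+\int_0^t\Big((\lambda+2\mu)\|\div u\|_{L^2}^2+\mu\|\o\|_{L^2}^2+\nu\|\curl H\|_{L^2}^2\Big)\,ds=C_0.
\end{align*}
Since $G(\rho)\ge0$ (the relative potential energy of the convex pressure law is nonnegative), $\rho\ge0$, and the coefficients $\lambda+2\mu$, $\mu$, $\nu$ are strictly positive, every term on the left is nonnegative. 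Discarding the dissipation integral bounds the energy at each time $t$ by $C_0$, hence bounds the supremum term of \eqref{basic1} by $C_0$; discarding the final energy bounds the full dissipation integral on $(0,T)$ by $C_0$ as well. Each constituent on the left of \eqref{basic1} is therefore individually controlled by $C_0$, which is the content of the estimate. Beyond the boundary-term bookkeeping already isolated in the first stage, this step is entirely mechanical.
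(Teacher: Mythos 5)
Your proposal is correct and follows essentially the same route as the paper: derive the energy identity \eqref{m2} by testing $\eqref{CMHD1}_1$, $\eqref{CMHD1}_2$, $\eqref{CMHD1}_3$ with $G'(\rho)$, $u$, $H$ respectively, use \eqref{navier-b}--\eqref{boundary} to discard the boundary terms, and integrate in time using the nonnegativity of $G(\rho)$, $\rho|u|^2$, $|H|^2$ and of the dissipation coefficients. The boundary-term bookkeeping and the Lorentz/induction cancellation you isolate are exactly the points the paper's one-line derivation of \eqref{m2} relies on.
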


Next, similarly to the compressible Navier-Stokes equations, let us set
\begin{align}\label{flux}
F\triangleq(\lambda+2\mu)\text{div}u-(P-\bar{P})-\frac{1}{2}|H|^2,
\end{align}
where $F$ denotes the effective viscous flux, which plays an important role in our following analysis.
For $F$, $\omega$, $\nabla u$ and $\nabla H$, we give the following conclusion, which is a key to a priori estimates.
\begin{lemma}\label{lem-f-td}
Let $(\rho,u,H)$ be a smooth solution of \eqref{CMHD}-\eqref{boundary} on $\O \times (0,T]$. Then for any $p\in[2,6],\,\,1<q<+\infty,$ there exists a positive constant $C$ depending only on $p$, $q$, $\mu$, $\lambda$ and $\Omega$ such that
\begin{align}
\displaystyle & \|\nabla u\|_{L^q}\leq C(\|\div u\|_{L^q}+\|\omega\|_{L^q}),\label{tdu1}\\
& \|\nabla H\|_{L^q}\leq C\|\curl H\|_{L^q},\label{tdh1}\\
& \|\nabla F\|_{L^p}\leq C(\|\rho\dot{u}\|_{L^p}+\|H\! \cdot \nabla H\|_{L^p}),\label{tdf1}\\
& \|\nabla\omega\|_{L^p}\leq  C(\|\rho\dot{u}\|_{L^p}+\|H \cdot \nabla H\|_{L^p}+\|\nabla u\|_{L^2}),\label{tdxd-u1}
\end{align}
\begin{align}
\|F\|_{L^p}\leq & C(\|\rho\dot{u}\|_{L^2}+\|H \!\cdot \!\nabla H\|_{L^2})^{(3p-6)/(2p)}(\|\nabla u\|_{L^2}+\|P\!-\!\bar{P}\|_{L^2}+\|H\|^2_{L^4})^{(6-p)/(2p)} \nonumber \\
& +C(\|\nabla u\|_{L^2}+\|P\!-\!\bar{P}\|_{L^2}+\|H\|^2_{L^4}),\label{f-lp}\\
\|\omega\|_{L^p} \leq & C(\|\rho\dot{u}\|_{L^2}+\|H \cdot \nabla H\|_{L^2})^{(3p-6)/(2p)}\|\nabla u\|_{L^2}^{(6-p)/(2p)}+C\|\nabla u\|_{L^2},\label{xdu1}
\end{align}
Moreover,
\begin{align}
\|\nabla u\|_{L^p}\leq & C(\|\rho\dot{u}\|_{L^2}+\|H \!\cdot \!\nabla H\|_{L^2})^{(3p-6)/(2p)}(\|\nabla u\|_{L^2}+\|P\!-\!\bar{P}\|_{L^2}+\|H\|^2_{L^4})^{(6-p)/(2p)}\nonumber \\
&+C(\|\nabla u\|_{L^2}+\|P\!-\!\bar{P}\|_{L^p}+\||H|^2\|_{L^p}).\label{tdu2}
\end{align}
\end{lemma}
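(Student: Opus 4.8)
The plan is to deduce \eqref{tdu2} directly from the three preceding estimates \eqref{tdu1}, \eqref{f-lp} and \eqref{xdu1}, together with the definition \eqref{flux} of the effective viscous flux, so that no new analytic input is required---only a careful reassembly of the terms. The starting point is \eqref{tdu1} with $q=p$, which reduces the task of controlling $\|\nabla u\|_{L^p}$ to bounding the two scalar quantities $\|\div u\|_{L^p}$ and $\|\omega\|_{L^p}$ separately.

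First I would dispose of the vorticity, which is already handled by \eqref{xdu1}. Since $\|\nabla u\|_{L^2}\le \|\nabla u\|_{L^2}+\|P-\bar{P}\|_{L^2}+\|H\|_{L^4}^2$, the interpolation factor $\|\nabla u\|_{L^2}^{(6-p)/(2p)}$ appearing there is dominated by $(\|\nabla u\|_{L^2}+\|P-\bar{P}\|_{L^2}+\|H\|_{L^4}^2)^{(6-p)/(2p)}$, so the product term of \eqref{xdu1} is subsumed into the first term of \eqref{tdu2} and its remainder $C\|\nabla u\|_{L^2}$ into the second. Next, for the divergence I would invert \eqref{flux}, writing $(\lambda+2\mu)\div u=F+(P-\bar{P})+\tfrac12|H|^2$, which gives
$$\|\div u\|_{L^p}\le C\big(\|F\|_{L^p}+\|P-\bar{P}\|_{L^p}+\||H|^2\|_{L^p}\big).$$
Substituting the bound \eqref{f-lp} for $\|F\|_{L^p}$ reproduces precisely the product term of \eqref{tdu2} plus the lower-order remainder $C(\|\nabla u\|_{L^2}+\|P-\bar{P}\|_{L^2}+\|H\|_{L^4}^2)$, while the two remaining contributions supply exactly the $\|P-\bar{P}\|_{L^p}$ and $\||H|^2\|_{L^p}$ terms on the second line of \eqref{tdu2}.

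The final step is to consolidate the lower-order remainders. Because $\Omega$ is bounded, Hölder's inequality yields the embedding $L^p(\Omega)\hookrightarrow L^2(\Omega)$ for $p\ge 2$, whence $\|P-\bar{P}\|_{L^2}\le C\|P-\bar{P}\|_{L^p}$ and $\|H\|_{L^4}^2=\||H|^2\|_{L^2}\le C\||H|^2\|_{L^p}$. This absorbs the remainder coming from \eqref{f-lp} (and the $C\|\nabla u\|_{L^2}$ coming from \eqref{xdu1}) into the term $C(\|\nabla u\|_{L^2}+\|P-\bar{P}\|_{L^p}+\||H|^2\|_{L^p})$, which completes \eqref{tdu2}. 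I do not expect a genuine obstacle here: the argument is essentially bookkeeping, and the only point requiring care is verifying that the two distinct interpolation factors---$\|\nabla u\|_{L^2}^{(6-p)/(2p)}$ from the vorticity and $(\|\nabla u\|_{L^2}+\|P-\bar{P}\|_{L^2}+\|H\|_{L^4}^2)^{(6-p)/(2p)}$ from the flux---can be placed under a common factor, which is precisely the monotonicity observation made in the second paragraph.
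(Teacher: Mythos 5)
Your derivation of \eqref{tdu2} is correct and is exactly the route the paper takes: apply \eqref{tdu1} with $q=p$, invert the definition \eqref{flux} to write $(\lambda+2\mu)\div u=F+(P-\bar P)+\tfrac12|H|^2$, insert \eqref{f-lp} and \eqref{xdu1}, and absorb the remainders using $L^p(\Omega)\hookrightarrow L^2(\Omega)$ and the monotonicity of $x\mapsto x^{(6-p)/(2p)}$. The paper compresses this into one sentence, so your bookkeeping is a faithful (indeed more explicit) version of its argument.

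The genuine gap is that this is a proof of only the last of the seven estimates in the lemma. You treat \eqref{tdu1}, \eqref{f-lp} and \eqref{xdu1} as ``preceding estimates'' requiring no new analytic input, but they are assertions of the same lemma and carry essentially all of its content. What is missing: (i) \eqref{tdu1} and \eqref{tdh1} follow from the div--curl estimate of Lemma \ref{lem-vn}, which is applicable only because $u\cdot n=0$ and $H\cdot n=0$ on $\partial\Omega$; (ii) \eqref{tdf1} requires deriving from the momentum equation the Neumann problem $\Delta F=\div(\rho\dot u-H\cdot\nabla H)$ with $\partial F/\partial n=(\rho\dot u-H\cdot\nabla H)\cdot n$ and invoking the corresponding elliptic estimate; (iii) \eqref{tdxd-u1} uses the identity $\mu\nabla\times\omega=\nabla F-\rho\dot u+H\cdot\nabla H$ together with Lemma \ref{lem-curl} applied to $\omega$ (valid since $\omega\times n=0$ on $\partial\Omega$ and $\div\omega=0$); and (iv) \eqref{f-lp} and \eqref{xdu1} are then obtained by Gagliardo--Nirenberg interpolation \eqref{g1} between the $L^2$ norms of $F,\omega$ and the gradient bounds \eqref{tdf1}, \eqref{tdxd-u1}. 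None of this is mere bookkeeping --- the boundary conditions enter in an essential way --- so the proposal as written does not constitute a proof of the stated lemma.
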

\begin{proof}
The inequality \eqref{tdu1}-\eqref{tdh1} is a direct result of Lemma \ref{lem-vn}, since $u\cdot n=0, H \cdot n= 0$ on $\partial\Omega$.
Moreover, noticing that \eqref{CMHD}$_3$ and $H \cdot n=0, \curl H \times n=0$ on $\partial \Omega$, by Lemma \ref{lem-vn}-\ref{lem-curl}, for any integer $k\geq 1$,  we obtain
\begin{align}\label{tdhk}
\|H\|_{W^{k+1,q}}\leq C \|\curl H\|_{W^{k,q}} \leq C(\|\curl^2 H\|_{W^{k-1,q}}+\|\curl H \|_{L^p}),
\end{align}
where $\curl^2 H \triangleq \curl\curl  H$ and we have used the fact $\div\curl H=0$.

By \eqref{CMHD}$_2$, \eqref{divh} and the slip boundary condition \eqref{navier-b}, one can find that the viscous flux $F$ satisfies
\begin{equation}
\begin{cases}
\Delta F=\div(\rho\dot{u}-H \cdot \nabla H)~~ &in\,\,\Omega,\\ \frac{\partial F}{\partial n}=(\rho\dot{u}-H \cdot \nabla H 
)\cdot n\,\, &on\,\, \partial\Omega.
\end{cases}
\end{equation}
It follows from Lemma 4.27 in \cite{ns2004} that
\begin{align}\label{tdf2}
\displaystyle  \|\nabla F\|_{L^q}\leq C(\|\rho\dot{u}\|_{L^q}
+\|H \cdot \nabla H\|_{L^q}
),
\end{align}
which gives  \eqref{tdf1}.
Moreover, for any integer $k\geq0$,
\begin{equation}\label{tdfk}
\|\nabla\! F\|_{W^{k+1,q}}\!\leq\! C(\|\rho\dot{u}\|_{L^q}\!\!+\!\|H\!\! \cdot\! \nabla\! H\|_{L^q}\!\!+\!\|\nabla\!(\rho\dot{u})\|_{W^{k,q}}\!\!+\!\|\nabla\!(H\!\! \cdot\! \nabla\! H)\|_{W^{k,q}}
).
\end{equation}

On the other hand, one can rewrite $\eqref{CMHD}_2 $ as
\begin{equation}\label{2m2}
\displaystyle  \mu\nabla\times\omega=\nabla F-\rho\dot{u}+H \cdot \nabla H.
\end{equation}
Noticing that $\omega\times n=0$ on $\partial\Omega$ and $\mathop{\mathrm{div}}\nolimits \omega=0$, by Lemma \ref{lem-curl}, we get
\begin{align}
\displaystyle  \|\nabla\omega\|_{L^q}\leq C(\|\nabla\times\omega\|_{L^q}+\|\omega\|_{L^q}) \leq C(\|\rho\dot{u}\|_{L^q}+\|H \cdot \nabla H\|_{L^q}+\|\omega\|_{L^q}),\label{tdxd-u2}
\end{align}
and for any integer $k\geq0$,
\begin{align}\label{tdxdk}
&\quad\|\nabla\omega\|_{W^{k+1,q}}\leq C(\|\nabla\times\omega\|_{W^{k+1,q}}+\|\omega\|_{L^q})\nonumber \\
&\leq C(\|\rho\dot{u}\|_{L^q}+\|H \cdot \nabla H\|_{L^q}+\|\nabla(\rho\dot{u})\|_{W^{k,q}}+\|\nabla(H \cdot \nabla H)\|_{W^{k,q}}+\|\omega\|_{L^q}),
\end{align}
where we have taken advantage of \eqref{tdf2} and \eqref{tdfk}.
By Sobolev's inequality and \eqref{tdxd-u2}, for $p\in[2,6]$,
\begin{align}
\|\nabla\omega\|_{L^p}&\le C(\|\rho\dot{u}\|_{L^p}+\|H \cdot \nabla H\|_{L^p}+\|\omega\|_{L^p} ) \nonumber \\
&\le C(\|\rho\dot{u}\|_{L^p}+\|H \cdot \nabla H\|_{L^p}+\|\rho\dot{u}\|_{L^2}+\|H \cdot \nabla H\|_{L^2}+\|\omega\|_{L^2}) \nonumber \\
&\le C(\|\rho\dot{u}\|_{L^p}+\|H \cdot \nabla H\|_{L^p}+\|\nabla u\|_{L^2}),
\end{align}
which implies \eqref{tdxd-u1}.

Furthermore, one can deduce from \eqref{g1} and \eqref{tdf1} that for $p\in[2,6]$,
\begin{align}\label{f-lp1}
\|F\|_{L^p}\leq& C\|F\|_{L^2}^{(6-p)/(2p)}\|\nabla F\|_{L^2}^{(3p-6)/(2p)}+C\|F\|_{L^2}\nonumber \\
\leq& C(\|\rho\dot{u}\|_{L^2}+\|H\! \cdot\! \nabla H\|_{L^2})^{(3p-6)/(2p)}(\|\nabla u\|_{L^2}+\|P\!-\!\bar{P}\|_{L^2}+\|H\|^2_{L^4})^{(6-p)/(2p)}\nonumber\\
&+C(\|\nabla u\|_{L^2}+\|P\!-\!\bar{P}\|_{L^2}+\|H\|^2_{L^4}),
\end{align}
similarly, by \eqref{g1} and \eqref{tdxd-u2},
\begin{align}\label{xdu-lp1}
\|\omega\|_{L^p}\leq& C\|\omega\|_{L^2}^{(6-p)/(2p)}\|\nabla \omega\|_{L^2}^{(3p-6)/(2p)}+C\|\omega\|_{L^2}\nonumber \\
\leq& C(\|\rho\dot{u}\|_{L^2}+\|H \cdot \nabla H\|_{L^2}+\|\nabla u\|_{L^2})^{(3p-6)/(2p)}\|\nabla u\|_{L^2}^{(6-p)/(2p)}+C\|\nabla u\|_{L^2}, \nonumber \\
\leq& C(\|\rho\dot{u}\|_{L^2}+\|H \cdot \nabla H\|_{L^2})^{(3p-6)/(2p)}\|\nabla u\|_{L^2}^{(6-p)/(2p)}+C\|\nabla u\|_{L^2},
\end{align}
and so \eqref{f-lp}-\eqref{xdu1} are established.
By virtue of \eqref{tdu1}, \eqref{f-lp}, \eqref{xdu1}, \eqref{tdf1} and \eqref{tdxd-u2}, it implies that \eqref{tdu2} holds. This completes the proof.
\end{proof}

\begin{remark}\label{rem:energy}
It is easy to check that there exists a positive constant $C$ depending only on $a, \gamma, \hat{\rho}, \bar{\rho}$ such that
\begin{align}\label{grho}
\displaystyle  C^{-1}(\rho-\bar{\rho})^{2}\leq G(\rho)\leq C(\rho-\bar{\rho})^{2},
\end{align}
which together with \eqref{basic1}, \eqref{tdu1} and  \eqref{tdh1} gives
\begin{align}\label{basic2}
\displaystyle \sup_{0\le t\le T}\|\rho-\bar{\rho}\|_{L^2}^2+\int_{0}^{T}(\|\nabla u\|_{L^{2}}^{2}+\|\nabla H\|_{L^{2}}^{2})dt\leq CC_{0}.
\end{align}
\end{remark}

\begin{remark}
From \eqref{tdhk}, we can get the estimate of $\|\nabla^{2} H\|_{L^p}$ and $\|\nabla^{3} H\|_{L^p}$ for $p\in [2,6]$,
\begin{align}\label{2tdh}
\|\nabla^2 H\|_{L^p}\leq C \|\curl H\|_{W^{1,p}} \leq C(\|\curl^2  H\|_{L^p}+\|\curl H \|_{L^p}),
\end{align}
and
\begin{align}\label{3tdh}
\|\nabla^3 H\|_{L^p}\leq C \|\curl H\|_{W^{2,p}} \leq C(\|\curl^2  H\|_{W^{1,p}}+\|\curl H \|_{L^p}).
\end{align}
On the other hand, we can get the estimates of $\|\nabla^{2}u\|_{L^p}$ and $\|\nabla^{3}u\|_{L^p}$ for $p\in[2,6]$ by Lemma \ref{lem-vn}, which will be devoted to giving higher order estimates in Section \ref{se4}. In fact, by Lemma \ref{lem-vn}, \eqref{tdf2} and \eqref{tdfk}, for $p\in[2,6]$,
\begin{align}\label{2tdu}
&\quad\|\nabla^{2}u\|_{L^p}\leq C(\|\div u\|_{W^{1,p}}+\|\omega\|_{W^{1,p}})\nonumber \\
&\leq C(\|\rho\dot{u}\|_{L^p}+\|H \cdot \nabla H\|_{L^p}+\|\nabla P\|_{L^p}+\|P-\bar{P}\|_{L^p} \nonumber \\
&\quad +\||H|^2\|_{L^p}+\|\nabla H \cdot H\|_{L^p}+\|\nabla u\|_{L^2}),
\end{align}
and
\begin{align}\label{3tdu}
&\quad\|\nabla^{3}u\|_{L^p}\leq C(\|\div u\|_{W^{2,p}}+\|\omega\|_{W^{2,p}})\nonumber \\
&\leq C(\|\nabla(\rho\dot{u})\|_{L^p}+\|\nabla(H \cdot \nabla H)\|_{L^p}+\|\nabla^2 P\|_{L^p}+\|\nabla(\nabla H \cdot H)\|_{L^p}+\|H \cdot \nabla H\|_{L^p}\nonumber \\
&\quad +\|\rho\dot{u}\|_{L^p}+\|\nabla P\|_{L^p}+\|P-\bar{P}\|_{L^p} +\||H|^2\|_{L^p}+\|\nabla H \cdot H\|_{L^p}+\|\nabla u\|_{L^2}).
\end{align}
\end{remark}

The lemma below gives an a priori estimate on the $L^2(\Omega\times(0,T))$-norm of  $ \rho-\bar{\rho} .$ 
\begin{lemma} \label{lem-rho2}
Let $(\rho,u,H)$ be a smooth solution of \eqref{CMHD}-\eqref{boundary} on $\O \times (0,T]$. Then there exists a positive constant $C$ depending only on $p$, $q$, $\mu$, $\lambda$, $\nu$ and $\Omega$ such that
\begin{align}\label{rho2-int}
\displaystyle  \int_0^T\int(\rho-\bar{\rho})^{2}dxdt\leq CC_0.
\end{align}
\end{lemma}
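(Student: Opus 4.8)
The plan is to estimate $\int_0^T\!\int(\rho-\bar{\rho})^2$ by testing the momentum balance against a vector field whose divergence is $\rho-\bar{\rho}$, so that the pressure term becomes a coercive quantity. Since conservation of mass gives $\int_\Omega(\rho-\bar{\rho})\,dx=0$, I would let $\xi$ solve the Bogovskii problem $\div\xi=\rho-\bar{\rho}$ in $\Omega$, $\xi=0$ on $\partial\Omega$, with $\|\xi\|_{W^{1,p}}\le C\|\rho-\bar{\rho}\|_{L^p}$; differentiating the constraint in time and using $\rho_t=-\div(\rho u)$ together with $\rho u\cdot n=0$ on $\partial\Omega$ (a consequence of $u\cdot n=0$), one gets the companion bound $\|\xi_t\|_{L^p}\le C\|\rho u\|_{L^p}$. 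The point of this test field is that, since $P(\rho)=a\rho^\gamma$ is strictly increasing and $\rho$ is bounded, the quotient $(P(\rho)-\bar{P})/(\rho-\bar{\rho})$ stays bounded below by a positive constant, whence the pointwise lower bound $(\rho-\bar{\rho})^2\le C(P(\rho)-\bar{P})(\rho-\bar{\rho})$.

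I would then multiply $\eqref{CMHD1}_2$ by $\xi$ and integrate over $\Omega\times(0,T)$. Integrating the pressure term by parts (with $\xi|_{\partial\Omega}=0$) turns it into $\int_0^T\!\int(P-\bar{P})(\rho-\bar{\rho})$, so by the lower bound it suffices to dominate every other term by $CC_0$ plus a small multiple of $X:=\int_0^T\!\int(\rho-\bar{\rho})^2$. The two linear viscous terms integrate by parts into $(\lambda+2\mu)\int\div u\,(\rho-\bar{\rho})$ and $\mu\int\omega\cdot\curl\xi$, both $\le C(\|\div u\|_{L^2}+\|\omega\|_{L^2})\|\rho-\bar{\rho}\|_{L^2}$; Cauchy--Schwarz in time with \eqref{basic1} bounds their time integral by $C\sqrt{C_0}\,X^{1/2}$, which Young's inequality splits as $\tfrac12 c\,X+CC_0$ (the free parameter chosen below the pressure constant $c$). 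Using $\div H=0$ and $H\cdot n=0$, the magnetic forcing terms integrate by parts into $\int(H\otimes H):\nabla\xi$ and $\tfrac12\int|H|^2(\rho-\bar{\rho})$, each controlled by $C\|H\|_{L^4}^2\|\rho-\bar{\rho}\|_{L^2}$.

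The main obstacle is the inertial pair $\int_0^T\!\int(\rho u_t+\rho u\cdot\nabla u)\cdot\xi$. I would integrate the $\rho u_t$ term by parts in time as $[\int\rho u\cdot\xi]_0^T-\int_0^T\!\int\rho_t u\cdot\xi-\int_0^T\!\int\rho u\cdot\xi_t$; substituting $\rho_t=-\div(\rho u)$ and integrating by parts in space, the $\rho_t$ term combines with the convective term to leave exactly $-\int_0^T\!\int\rho(u\otimes u):\nabla\xi$. The endpoint term is $\le C\sup_t\|\rho^{1/2}u\|_{L^2}\|\rho-\bar{\rho}\|_{L^2}\le CC_0$ by \eqref{basic1}--\eqref{basic2}, while the two remaining pieces satisfy $\int\rho|u|^2|\nabla\xi|\le C\|u\|_{L^6}^2\|\rho-\bar{\rho}\|_{L^2}$ and $\int\rho|u|\,|\xi_t|\le C\|\rho^{1/2}u\|_{L^2}^2$. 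The crucial observation, which makes the whole estimate independent of $T$, is that the boundary conditions $u\cdot n=0$ and $H\cdot n=0$ furnish genuine Poincar\'e inequalities $\|u\|_{L^2}+\|u\|_{L^6}\le C\|\nabla u\|_{L^2}$ and $\|H\|_{L^2}+\|H\|_{L^6}\le C\|\nabla H\|_{L^2}$ (a constant vector tangent to the boundary of a bounded domain must vanish, since $|c|^2|\Omega|=\int_{\partial\Omega}(c\cdot n)(c\cdot x)\,dS=0$); combined with \eqref{tdu1}--\eqref{tdh1} these convert every kinetic quantity into the dissipation, e.g. $\|\rho^{1/2}u\|_{L^2}^2\le C\|\nabla u\|_{L^2}^2$ and $\|H\|_{L^4}^2\le C\|\nabla H\|_{L^2}^2$. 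Hence the inertial and magnetic time integrals are bounded by $C\sup_t\|\rho-\bar{\rho}\|_{L^2}\int_0^T(\|\nabla u\|_{L^2}^2+\|\nabla H\|_{L^2}^2)\,dt\le CC_0^{3/2}$ via \eqref{basic1}--\eqref{basic2}. Collecting all contributions gives $c\,X\le\tfrac12 c\,X+CC_0$, and absorbing the small multiple of $X$ into the left-hand side yields \eqref{rho2-int}; note that only the a priori bound on $\rho$ is needed, not the smallness of $C_0$.
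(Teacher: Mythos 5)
Your proposal is correct and takes essentially the same route as the paper: the paper's proof also tests the momentum equation against the Bogovskii field $\mathcal{B}[\rho-\bar\rho]$ from Lemma \ref{lem-divf}, handles the time derivative of the test field via $\mathcal{B}[\rho_t]$ with $\rho_t=-\div(\rho u)$, and closes with the energy bounds \eqref{basic1}--\eqref{basic2} after converting the pressure pairing into $\int(\rho-\bar\rho)^2$. The differences are purely organizational (the paper keeps the total time derivative $\left(\int\rho u\cdot\mathcal{B}[\rho-\bar\rho]\,dx\right)_t$ and absorbs $\delta\|\rho-\bar\rho\|_{L^2}^2$ pointwise in time rather than after integrating), and both arguments rely on the same implicit upper bound on $\rho$.
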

\begin{proof}
From Lemma \ref{lem-divf}, multiplying $\eqref{CMHD}_2$ by $\mathcal{B}[\n-\bar\n]$ and integrating over $\Omega,$ one has
\begin{align}\label{p-time}
&\int(P-\bar{P})(\n-\bar\n) dx \nonumber \\
= &\left(\int\rho u\cdot\mathcal{B}[\n-\bar\n] dx\right)_t-\int\rho u\cdot\nabla\mathcal{B}[\n-\bar\n]\cdot udx - \int\rho u\cdot\mathcal{B}[\n_t]  dx  \nonumber \\
& +\mu\int\nabla u\cdot\nabla\mathcal{B}[\n-\bar\n] dx + (\lambda+\mu)\int(\rho-\bar{\rho})\div udx \nonumber \\
& -\int H\cdot\nabla\mathcal{B}[\n-\bar\n]\cdot H dx - \int(\rho-\bar{\rho}) |H|^2/2 dx \nonumber \\
\leq &  \left(\int\rho u\cdot\mathcal{B}[\n-\bar\n] dx\right)_t+C\|\rho^{\frac{1}{2}}u\|_{L^{4}}^{2}\|\n-\bar\n\|_{L^2} +C\|\rho u\|_{L^2}^2 \nonumber \\
& +C\|\rho-\bar{\rho}\|_{L^2}\|\nabla u\|_{L^2}+C\|\rho-\bar{\rho}\|_{L^2}\|H\|_{L^4}^2 \nonumber \\
\leq & \left(\int\rho u\cdot\mathcal{B}[\n-\bar\n] dx\right)_t+\de \|\n-\bar\n\|_{L^2}^2 +C(\de)(\|\na u\|_{L^2}^2+\|\na H\|_{L^2}^2),
\end{align}
integrating it over $(0,T]$ together with \eqref{basic2} gives
\begin{align}
\displaystyle  \int_0^T\int(\rho-\bar{\rho})^{2}dxdt 
& \leq C\Big(\sup_{   0\le t\le T  }(\|\rho^{\frac{1}{2}}u\|_{L^{2}}^{2}+\|\rho-\bar{\rho}\|_{L^{2}}^{2})+\int_0^{T}(\|\nabla u\|_{L^{2}}^{2}+\|\nabla H\|_{L^{2}}^{2})dt\Big) \nonumber\\
& \leq CC_0,
\end{align}
and we finish the proof.
\end{proof}

Next we need the estimates on the material derivative of $u$. Since $u\cdot n=0$ on $\partial\Omega$, it follows that
\begin{align}\label{bdd2}
\displaystyle u\cdot\nabla u\cdot n=-u\cdot\nabla n\cdot u,
\end{align}
which implies
\begin{align}\label{bdd3}
\displaystyle (\dot{u}+(u\cdot\nabla n)\times u^{\perp})\cdot n=0 \mbox{ on } \partial \Omega  ,
\end{align}
where $u^{\perp}\triangleq -u\times n$ on $\partial\Omega$. We review the following Poincare-type inequality of $\dot u$, which depends on this observation (see \cite{cl2019}, Lemma 3.2).
\begin{lemma}\label{lem-ud}
If $(\rho,u,H)$ is a smooth solution of \eqref{CMHD} with slip condition \eqref{navier-b}-\eqref{boundary}, then there exists a positive constant $C$ depending only on $\Omega$ such that
\begin{align}
&\|\dot{u}\|_{L^6}\le C(\|\nabla\dot{u}\|_{L^2}+\|\nabla u\|_{L^2}^2),\label{udot}\\
&\|\nabla\dot{u}\|_{L^2}\le C(\|\div \dot{u}\|_{L^2}+\|\curl \dot{u}\|_{L^2}+\|\nabla u\|_{L^4}^2).\label{tdudot}
\end{align}
\end{lemma}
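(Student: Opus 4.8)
The plan is to transfer each estimate from $\dot u$ (which fails the impermeability condition) to a corrected field that genuinely satisfies it on $\partial\Omega$, and then to control the correction. Differentiating $u\cdot n=0$ in $t$ gives $u_t\cdot n=0$, so by \eqref{bdd2} we have $\dot u\cdot n=u\cdot\nabla u\cdot n=-u\cdot\nabla n\cdot u$ on $\partial\Omega$; equivalently, by \eqref{bdd3}, the field $w\triangleq \dot u+\Phi$ with $\Phi\triangleq(u\cdot\nabla n)\times u^{\perp}$ obeys $w\cdot n=0$ on $\partial\Omega$. To give $\Phi$ and its derivatives a meaning in the interior I would fix a smooth extension of $n$ to $\bar\Omega$, so that $\Phi$ is a smooth bilinear expression in $u$ with bounded coefficients, giving pointwise $|\Phi|\le C|u|^2$ and $|\nabla\Phi|\le C(|u||\nabla u|+|u|^2)$. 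Using the Poincar\'e--Sobolev bound $\|u\|_{L^4}\le C\|u\|_{L^6}\le C\|\nabla u\|_{L^2}$ (valid since $u\cdot n=0$) together with the bounded-domain inequality $\|\nabla u\|_{L^2}\le C\|\nabla u\|_{L^4}$, I record the single estimate I will need,
\[
\|\nabla\Phi\|_{L^2}\le C\big(\|u\|_{L^4}\|\nabla u\|_{L^4}+\|u\|_{L^4}^2\big)\le C\|\nabla u\|_{L^4}^2 .
\]

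For \eqref{udot} I would \emph{not} subtract $\Phi$ in the interior, but route the boundary defect through a trace. First I would establish, by a standard compactness (Rellich) argument, the Poincar\'e--trace inequality
\[
\|v\|_{L^6}\le C\big(\|\nabla v\|_{L^2}+\|v\cdot n\|_{L^2(\partial\Omega)}\big)\qquad\text{for all } v\in H^1(\Omega),
\]
where the only obstruction in the contradiction argument is a constant field tangent to $\partial\Omega$, which must vanish because $n$ is non-constant on the boundary of a bounded domain. Applying this to $v=\dot u$ and using $|\dot u\cdot n|\le C|u|^2$ on $\partial\Omega$ (from \eqref{bdd2}), the trace embedding $\|u\|_{L^4(\partial\Omega)}\le C\|u\|_{H^1(\Omega)}$ and the Poincar\'e inequality ($u\cdot n=0$) give $\|\dot u\cdot n\|_{L^2(\partial\Omega)}\le C\|u\|_{L^4(\partial\Omega)}^2\le C\|\nabla u\|_{L^2}^2$, whence \eqref{udot}.

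For \eqref{tdudot} I would apply the div--curl estimate of Lemma \ref{lem-vn} (here simple-connectedness of $\Omega$ rules out nontrivial harmonic fields) to $w=\dot u+\Phi$, which satisfies $w\cdot n=0$:
\[
\|\nabla\dot u\|_{L^2}\le\|\nabla w\|_{L^2}+\|\nabla\Phi\|_{L^2}\le C\big(\|\div w\|_{L^2}+\|\curl w\|_{L^2}\big)+\|\nabla\Phi\|_{L^2}.
\]
Since $\div w=\div\dot u+\div\Phi$ and $\curl w=\curl\dot u+\curl\Phi$ with $|\div\Phi|+|\curl\Phi|\le C|\nabla\Phi|$, the right-hand side is bounded by $C(\|\div\dot u\|_{L^2}+\|\curl\dot u\|_{L^2}+\|\nabla\Phi\|_{L^2})$, and the bilinear bound $\|\nabla\Phi\|_{L^2}\le C\|\nabla u\|_{L^4}^2$ recorded above yields \eqref{tdudot}.

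The main obstacle is the non-vanishing normal trace of $\dot u$, and the key point is that the two inequalities require different devices. The interior subtraction of $\Phi$ succeeds for \eqref{tdudot} because its right-hand side already carries the stronger quantity $\|\nabla u\|_{L^4}^2$, which absorbs the cross term $\|u\|_{L^4}\|\nabla u\|_{L^4}$ produced by $\nabla\Phi$; the same subtraction fails for \eqref{udot}, since bounding $\|\dot u\|_{L^6}$ through $\|\nabla w\|_{L^2}$ would again generate $\|\nabla\Phi\|_{L^2}$ (and $\|\Phi\|_{L^6}\sim\|u\|_{L^{12}}^2$), neither of which is controllable by $\|\nabla u\|_{L^2}^2$ in three dimensions. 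The trace route circumvents this precisely because on the two-dimensional boundary $\dot u\cdot n$ is merely quadratic in $u$ and involves no derivative of $\dot u$, so that $\|\nabla u\|_{L^2}^2$ suffices. Throughout I rely on smoothness of $\partial\Omega$ (for $\nabla n$, the extension of $n$, and the trace inequalities) and on simple-connectedness (for the div--curl estimate).
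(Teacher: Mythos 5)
Your proof is correct. Note first that the paper does not actually prove this lemma: it records the boundary identities \eqref{bdd2}--\eqref{bdd3} and then cites Lemma 3.2 of \cite{cl2019} for both estimates, so your write-up is in effect the missing argument, and it is built on exactly the observation the paper singles out. Your two devices are the standard and, as you correctly explain, necessary ones: for \eqref{tdudot} the interior correction $w=\dot u+(u\cdot\nabla n)\times u^{\perp}$ restores $w\cdot n=0$ so that Lemma \ref{lem-vn} applies, and the error $\|\nabla\Phi\|_{L^2}\le C\|\nabla u\|_{L^4}^2$ is absorbed by the right-hand side; for \eqref{udot} the same subtraction would cost $\|\Phi\|_{L^6}\sim\|u\|_{L^{12}}^2$ and $\|\nabla\Phi\|_{L^2}\lesssim\|\nabla u\|_{L^4}^2$, neither of which $\|\nabla u\|_{L^2}^2$ controls, so routing the defect through the trace via $\|\dot u\cdot n\|_{L^2(\partial\Omega)}\le C\|u\|_{L^4(\partial\Omega)}^2\le C\|u\|_{H^1}^2\le C\|\nabla u\|_{L^2}^2$ is the right move; your diagnosis of why the two inequalities require different treatments is precisely the content of the lemma. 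One small repair in the compactness proof of $\|v\|_{L^6}\le C\bigl(\|\nabla v\|_{L^2}+\|v\cdot n\|_{L^2(\partial\Omega)}\bigr)$: the limiting constant field $c$ with $c\cdot n\equiv0$ on $\partial\Omega$ is excluded not because ``$n$ is non-constant'' (non-constancy alone does not preclude $c\cdot n\equiv0$, as an infinite cylinder shows) but because $\Omega$ is bounded: maximizing $x\mapsto c\cdot x$ over $\bar\Omega$ yields a boundary point at which $n$ is parallel to $c$, so $c\cdot n\equiv0$ forces $c=0$. With that justification supplied, every step checks out.
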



To this end, we recall the following local existence theorem of classical solution of \eqref{CMHD}-\eqref{boundary}, which can be proved in a similar manner as that in \cite{tg2016,xh2017}, base on the standard contraction mapping principle.
\begin{lemma}\label{lem-local}
Assume that the initial date $(\rho_0,u_0,H_0)$ satisfy the conditions \eqref{dt1}, \eqref{dt2} and \eqref{dt3}. Then there exist a positive time $T_0>0$ and a unique classical solution $(\rho,u,H)$ of the system \eqref{CMHD}-\eqref{boundary} in $\Omega \times (0,T_0]$, satisfying that $\rho \geq 0$, and that for $\tau \in (0,T_0)$,
\begin{equation}\label{esti-uh-local}
\begin{cases}
(\rho-\bar{\rho},P-\bar{P})\in C([0,T_0);H^2 \cap W^{2,q} ),\\
\nabla u\in C([0,T_0);H^1 )\cap  L^\infty(\tau,T_0;H^2\cap W^{2,q}),\\
u_t\in L^\infty(\tau,T_0; D^1 \cap H^2)\cap H^1(\tau,T_0; H^1),\\
H \in C([0,T_0);H^2)\cap  L^\infty(\tau,T_0; H^4),\\
H_t\in C([0,T_0);L^2)\cap H^1(\tau,T_0; H^1)\cap  L^\infty(\tau,T_0; H^2).	
\end{cases}
\end{equation}
\end{lemma}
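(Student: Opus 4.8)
The plan is to prove local existence by a standard linearization coupled with the Banach fixed-point theorem, together with an approximation procedure that removes the degeneracy caused by the possible vacuum. Because $\rho_0$ is only assumed nonnegative, the momentum equation degenerates on the vacuum set $\{\rho_0=0\}$, so one cannot solve directly for $u_t$ there. I would therefore first regularize by replacing $\rho_0$ with $\rho_0^\delta=\rho_0+\delta$ for small $\delta>0$, obtaining a strictly positive density, solve the regularized problem, and then let $\delta\to 0^+$. The compatibility condition \eqref{dt3}, which produces $g\in L^2$ with $\rho_0^{1/2}g$ equal to the initial momentum residual, is exactly what furnishes a bound on $\|(\rho_0^\delta)^{1/2}u_t(\cdot,0)\|_{L^2}$ uniform in $\delta$; this is the crux of passing to the vacuum limit while retaining the regularity in \eqref{esti-uh-local}.

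For fixed $\delta>0$, I would set up the iteration as follows. Given $(\rho^{k},u^{k},H^{k})$: (i) solve the linear transport problem $\rho^{k+1}_t+\div(\rho^{k+1}u^{k})=0$, $\rho^{k+1}(\cdot,0)=\rho_0^\delta$, by characteristics, which keeps $\rho^{k+1}\geq c(\delta)>0$ and propagates the $H^2\cap W^{2,q}$ regularity of the density; (ii) solve the linear Lam\'e-type system $\rho^{k}u^{k+1}_t-\mu\Delta u^{k+1}-(\mu+\lambda)\nabla\div u^{k+1}=-\rho^k u^k\cdot\nabla u^k-\nabla P(\rho^{k+1})+(\nabla\times H^k)\times H^k$ under the slip conditions $u^{k+1}\cdot n=0$, $\curl u^{k+1}\times n=0$, with $u^{k+1}(\cdot,0)=u_0$; and (iii) solve the linear parabolic magnetic problem $H^{k+1}_t-\nu\Delta H^{k+1}=\nabla\times(u^k\times H^{k})$ under $H^{k+1}\cdot n=0$, $\curl H^{k+1}\times n=0$, with $H^{k+1}(\cdot,0)=H_0$. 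Each subproblem is solvable by classical theory: the transport equation by characteristics, and the two parabolic systems because the slip boundary conditions satisfy the complementing (Lopatinski\u{\i}--Shapiro) condition, so the Petrovsky-type Lam\'e estimates and the $\curl$-based elliptic estimates alluded to in the introduction apply. Note that $-\nu\nabla\times\curl H=\nu\Delta H$ once $\div H=0$, and taking the divergence of step (iii) shows the iteration preserves $\div H^{k+1}=0$.

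Next I would establish a priori bounds for the iterates that are uniform in $k$ on a short interval $[0,T_0]$ with $T_0=T_0(\delta)$, and in a second pass uniform in $\delta$. These follow from energy estimates in the spirit of Lemma \ref{lem-basic} together with the effective-flux, vorticity and $\curl$-estimates of Lemma \ref{lem-f-td}, the material-derivative inequalities of Lemma \ref{lem-ud}, and standard parabolic regularity; the boundary contributions are controlled using the slip identity \eqref{bdd3}. With the uniform bounds in hand, I would show that $(\rho^{k+1}-\rho^k,u^{k+1}-u^k,H^{k+1}-H^k)$ contracts in the weaker norm $C([0,T_0];L^2)$ for $T_0$ small, so the iterates converge to a fixed point $(\rho^\delta,u^\delta,H^\delta)$ solving the nonlinear system with $\rho^\delta\geq c(\delta)>0$; the higher regularity in \eqref{esti-uh-local} is then recovered by bootstrapping through the equations.

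Finally, I would pass to the limit $\delta\to 0^+$. The uniform-in-$\delta$ estimates, anchored at $t=0$ by the compatibility condition, allow extraction of a subsequence converging to a limit $(\rho,u,H)$ with $\rho\geq 0$ that solves \eqref{CMHD}--\eqref{boundary} and enjoys \eqref{esti-uh-local}; lower-order strong convergence suffices to identify the nonlinear terms, while weak lower semicontinuity preserves the bounds. Uniqueness follows from a Gr\"onwall argument on the difference of two solutions sharing the same data, again using \eqref{bdd3} for the boundary terms. The main obstacle throughout is the vacuum: without a positive lower bound on $\rho$ the momentum equation is non-parabolic on $\{\rho=0\}$, and the only reason the $\delta\to 0$ limit keeps $u_t\in L^\infty(\tau,T_0;H^2)$ and $\rho^{1/2}u_t$ controlled near $t=0$ is the compatibility condition \eqref{dt3}. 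A secondary difficulty is that every elliptic and parabolic estimate must be compatible with the slip conditions rather than Dirichlet data, which is precisely why the boundary-adapted estimates are needed in place of the usual ones.
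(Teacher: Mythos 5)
Your outline — linearize, iterate via the contraction mapping principle, regularize the vacuum by $\rho_0+\delta$, use the compatibility condition \eqref{dt3} to control $\rho^{1/2}u_t$ at $t=0$ uniformly in $\delta$, and adapt all elliptic/parabolic estimates to the slip boundary conditions — is exactly the standard scheme the paper invokes: it gives no proof of Lemma \ref{lem-local} itself, stating only that it "can be proved in a similar manner as that in \cite{tg2016,xh2017}, based on the standard contraction mapping principle." Your proposal is therefore consistent with, and fills in the same route as, the paper's (deferred) argument.
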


\section{\label{se3} A priori estimates(I): lower order estimates}

In this section, we will establish the time-independent a priori bounds of the solutions of the problem \eqref{CMHD}-\eqref{boundary}. Let $T>0$ be a fixed time and $(\rho,u,H)$ be a smooth solution to \eqref{CMHD}-\eqref{boundary} on $\Omega \times (0,T]$  with smooth initial data $(\rho_0,u_0,H_0)$ satisfying $u_0\in H^s, H_0\in H^s$ for some $s\in(\frac{1}{2},1]$ and $0\leq\rho_0\leq 2\hat{\rho}$.

Set $\si=\si(t)\triangleq\min\{1,t \},$ we define
\begin{align}
&  A_1(T) \triangleq \sup_{   0\le t\le T  }\sigma\left(\|\nabla u\|_{L^2}^2+\|\nabla H\|_{L^2}^2\right) \nonumber \\
&\qquad\qquad + \int_0^{T} \sigma\int
 \left(\rho|\dot{u} |^2+|\curl^2 H|^2+|H_t|^2\right) dxdt,\label{As1}\\
& A_2(T)  \triangleq \sup_{  0\le t\le T   }\sigma^2\int
 \left(\rho|\dot{u} |^2+|\curl^2 H|^2+|H_t|^2\right) dx \nonumber \\
&\qquad\qquad+ \int_0^{T}\int \sigma^2\left(|\nabla\dot{u}|^2+|\nabla H_t|^2\right)dxdt,\label{As2}\\
& A_3(T) \triangleq\sup_{  0\le t\le T   }\int|H|^3dx ,\label{As3}\\
& A_4(T) \triangleq \sup_{   0\le t\le T  }\sigma^{\frac{3-2s}{4}}\left(\|\nabla u\|_{L^2}^2+\|\nabla H\|_{L^2}^2\right) \nonumber \\
&\qquad\qquad+\int_0^{T} \sigma^{\frac{3-2s}{4}}\int
 \left(\rho|\dot{u} |^2+|\curl^2 H|^2+|H_t|^2\right) dxdt,\label{As4} \\
& A_5(T) \triangleq\sup_{  0\le t\le T   }\int\rho|u|^3dx,\label{As5}
\end{align}
where $s\in(\frac{1}{2},1]$ and $\dot{v}=v_t+u \cdot \nabla v$ is the material derivative.

Now we will give the following key a priori estimates in this section, which guarantees the existence of a global classical solution of \eqref{CMHD}--\eqref{boundary}.
\begin{proposition}\label{pr1}
Under the conditions of Theorem \ref{th1}, for $\delta_0\triangleq\frac{2s-1}{9s}\in(0,\frac{1}{9}]$, there exists a  positive constant  $\ve$ depending on $\mu$, $\lambda$, $\nu$, $a$, $\ga$, $\on$, $\hat{\rho}$, $s$, $\Omega$, $M_1$ and $M_2$ such that if $(\rho,u,H)$  is a smooth solution of \eqref{CMHD}-\eqref{boundary}  on $\Omega\times (0,T] $ satisfying
\begin{equation}\label{key1}
\begin{cases}
\sup\limits_{\Omega\times [0,T]}\rho\le 2\hat{\rho},\quad
A_1(T) + A_2(T) \le 2C_0^{1/2},\\
A_3(T)\leq 2C_0^{\delta_0},\quad A_4(\sigma(T))+A_5(\sigma(T))\leq 2C_0^{\delta_0},
\end{cases}
\end{equation}
 then the following estimates hold
 \begin{equation}\label{key2}
\begin{cases}
\sup\limits_{\Omega\times [0,T]}\rho\le 7\hat{\rho}/4,\quad
A_1(T) + A_2(T) \le C_0^{1/2},\\
A_3(T)\leq C_0^{\delta_0},\quad A_4(\sigma(T))+A_5(\sigma(T))\leq C_0^{\delta_0},
\end{cases}
\end{equation}
provided $C_0\le \ve.$
\end{proposition}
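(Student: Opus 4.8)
The plan is to run the standard continuity (bootstrap) argument underlying this class of problems: assuming the a priori bounds \eqref{key1}, I would establish the strictly better bounds \eqref{key2} through a chain of weighted energy estimates, and then absorb every error term by taking the initial energy $C_0$ small. Since all constants must stay time-independent, the guiding principle is to express each quantity to be controlled as $CC_0^\theta$ for some $\theta>0$ plus a term of the form (small constant)$\times(A_1(T)+A_2(T))$ that can be absorbed into the left-hand side.

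First, to bound $A_1(T)+A_2(T)$, I would test the momentum equation \eqref{CMHD1}$_2$ against $\dot u$ and integrate over $\Omega$, and simultaneously test the magnetic equation \eqref{CMHD1}$_3$ against $H_t$ and $\curl^2 H$. The viscous and magnetic dissipation produce the good terms $\int\rho|\dot u|^2$, $\int|H_t|^2$ and $\int|\curl^2 H|^2$, while the pressure, convective and Lorentz-force contributions are handled via the effective-viscous-flux and vorticity estimates \eqref{tdf1}--\eqref{tdxd-u1} of Lemma \ref{lem-f-td} together with the basic energy bound \eqref{basic1}. The genuinely new difficulty is the boundary integrals created by integration by parts, since $\dot u\cdot n\neq 0$ in general; these I would control through the identity \eqref{bdd3} and the Poincar\'e-type inequalities \eqref{udot}--\eqref{tdudot} of Lemma \ref{lem-ud}, which is precisely where the geometric relation $u\cdot\nabla u\cdot n=-u\cdot\nabla n\cdot u$ enters. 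Multiplying the resulting differential inequalities by the weights $\sigma$ and $\sigma^2$ and integrating in time should then give $A_1(T)+A_2(T)\le CC_0+CC_0^\theta\le C_0^{1/2}$ for $C_0$ small.

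Next I would estimate $A_3(T)=\sup_t\int|H|^3\,dx$ by testing the magnetic equation against $|H|H$, using $\div H=0$ and \eqref{tdh1} to dominate the gradient terms, yielding a bound $CC_0^{\delta_0}$. The short-time weighted quantities $A_4(\sigma(T))$ and $A_5(\sigma(T))=\sup\int\rho|u|^3\,dx$ are where the hypotheses \eqref{dt-s} on the $H^s$-norms of the initial data become indispensable: I would repeat the $A_1$-type computation but with the weight $\sigma^{\frac{3-2s}{4}}$ tuned to the fractional regularity, interpolate $\|u\|_{L^3}$ and $\|H\|_{L^3}$ between $L^2$ and $H^s$, and use Gagliardo--Nirenberg inequalities to convert the $H^s$ smallness into the exponent $\delta_0=\frac{2s-1}{9s}$, thereby closing $A_4(\sigma(T))+A_5(\sigma(T))\le C_0^{\delta_0}$.

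The main obstacle, as is typical here, is the time-independent upper bound $\rho\le 7\hat\rho/4$. I would rewrite the continuity equation along particle trajectories using the effective viscous flux $F$ from \eqref{flux}, obtaining the ODE $\dot\rho=-\frac{\rho}{\lambda+2\mu}\big(F+P(\rho)-\bar P+\tfrac12|H|^2\big)$ along the flow, and then apply a Zlotnik-type comparison lemma. Making this succeed requires small time-integrated control of $F$ (hence of $\rho\dot u$ and the Lorentz term $H\cdot\nabla H$ through \eqref{tdf1}) together with the uniform bounds on $A_1,\dots,A_5$ just obtained; the coupling to the magnetic field through $\tfrac12|H|^2$ and $H\cdot\nabla H$ is the feature that separates the MHD case from the Navier--Stokes analysis of Cai--Li and is the most delicate point. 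Once the flux integral is shown to be $O(C_0^\theta)$, the comparison argument pushes $\rho$ strictly below $7\hat\rho/4$ for $C_0$ small, and collecting all the improved estimates completes the proof.
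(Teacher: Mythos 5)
Your overall strategy coincides with the paper's: a bootstrap in which $A_3$ is closed by testing the induction equation against $|H|H$, $A_1+A_2$ by testing the momentum equation against $\sigma^m\dot u$ and the induction equation against $H_t$ with the boundary terms handled through $u\cdot\nabla u\cdot n=-u\cdot\nabla n\cdot u$ and Lemma \ref{lem-ud}, $A_5$ by testing against $3|u|u$ with the initial term $\int\rho_0|u_0|^3dx$ interpolated between $L^2$ and $H^s$, and the density bound by the Zlotnik comparison applied to $D_t\rho=g(\rho)+b'(t)$ with $b$ built from $F+|H|^2/2$. All of these match Lemmas \ref{lem-h3}, \ref{lem-a0}, \ref{lem-a1a2}, \ref{lem-a3} and \ref{lem-brho}.

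There is, however, a genuine gap in your treatment of $A_4(\sigma(T))$. You propose to ``repeat the $A_1$-type computation with the weight $\sigma^{\frac{3-2s}{4}}$ and interpolate $\|u\|_{L^3}$, $\|H\|_{L^3}$ between $L^2$ and $H^s$,'' but the solution $u(\cdot,t)$ of the nonlinear system carries no propagated $H^s$ bound to interpolate with, and a direct energy estimate with a fractional time weight cannot be started at $t=0$ using only $\|u_0\|_{H^s}\le M_1$ (the constants must not depend on $\|u_0\|_{H^2}$). The paper's resolution (Lemma \ref{lem-a1}) is to decompose $u=w_1+w_2+w_3$ into solutions of \emph{linear} problems \eqref{fcz1}--\eqref{fcz3} (homogeneous evolution of $u_0$, pressure forcing, Lorentz forcing), prove the endpoint weighted estimates with weights $t^0$ and $t^1$ for each piece, and then apply the Stein--Weiss interpolation theorem to the \emph{linear solution operators} $w_{10}\mapsto w_1$ and $B_0\mapsto w_3$ to obtain the $t^{1-s}$-weighted bound \eqref{uv1} with constant $C(\hat\rho,M_1,M_2)$; the bound $A_4(\sigma(T))\le C_0^{\delta_0}$ then follows by H\"older interpolation between this (merely bounded) quantity and the small quantity $A_1(T)$, using $\frac{3-2s}{4}=(1-s)\frac{2s+1}{4s}+\frac{2s-1}{4s}$, as in \eqref{ba4}. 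Without this linearization-plus-interpolation device (or an equivalent one) your argument for $A_4$ does not close; and since the closing of $A_1+A_2$ requires $\int_0^{\sigma(T)}t^2\|\nabla u\|_{L^4}^4dt$ to be controlled by precisely these short-time weighted quantities (see \eqref{al4-l}), the gap propagates to that step as well.
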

\begin{proof} Proposition \ref{pr1} is a consequence of the following Lemmas \ref{lem-h3}, \ref{lem-a3}-\ref{lem-brho} below.
\end{proof}

In the following, we will use the convention that $C$ denotes a generic positive constant depending on $\mu , \lambda , \nu,  \ga ,  a , \bar{\rho}, \hat{\rho},  s, \Omega$,  $M_1$  and $M_2$ and use $C(\alpha)$ to emphasize that $C$ depends on $\alpha$. We begin with the following standard energy estimate for $(\nabla u,\nabla H)$.
\begin{lemma}\label{lem-basic2}
 Let $(\rho,u,H)$ be a smooth solution of \eqref{CMHD}-\eqref{boundary} satisfying \eqref{key1}. Then there is a positive constant $C$ such that
\begin{align}\label{basic3}
\displaystyle \int_0^T(\|\nabla u\|_{L^{2}}^{4}+\|\nabla H\|_{L^{2}}^{4})dt\leq CC_{0}^{2 \delta_0},
\end{align}
provided $C_0\leq 1$.
\end{lemma}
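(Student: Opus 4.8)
The plan is to bound the quartic time-integral by interpolating the dissipation estimate \eqref{basic2}, which gives $\int_0^T(\|\nabla u\|_{L^2}^2+\|\nabla H\|_{L^2}^2)\,dt\le CC_0$, against the pointwise-in-time weights recorded in $A_1(T)$ and $A_4(\sigma(T))$. The only genuine issue is the degeneracy of the time weight near $t=0$: the quantity $A_1(T)$ controls merely $\sigma\,\|\nabla u\|_{L^2}^2$, which vanishes as $t\to0$ and hence gives no uniform bound on the initial layer. I would therefore split $\int_0^T=\int_0^{\sigma(T)}+\int_{\sigma(T)}^T$ at $\sigma(T)=\min\{1,T\}$ and use a different weight on each piece.

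On the outer region $[\sigma(T),T]$ (nonempty only when $T>1$, where $\sigma\equiv1$), the hypothesis $A_1(T)\le 2C_0^{1/2}$ gives $\|\nabla u\|_{L^2}^2\le 2C_0^{1/2}$ uniformly, so that
$$\int_{\sigma(T)}^T\|\nabla u\|_{L^2}^4\,dt\le 2C_0^{1/2}\int_0^T\|\nabla u\|_{L^2}^2\,dt\le 2C_0^{1/2}\cdot CC_0=CC_0^{3/2};$$
since $2\delta_0\le 2/9<3/2$ and $C_0\le1$, this contribution is $\le CC_0^{2\delta_0}$.

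On the initial layer $[0,\sigma(T)]$, where $\sigma=t$, the bound $A_4(\sigma(T))\le 2C_0^{\delta_0}$ translates into $t^{(3-2s)/4}\|\nabla u\|_{L^2}^2\le 2C_0^{\delta_0}$, i.e. $\|\nabla u\|_{L^2}^2\le 2C_0^{\delta_0}\,t^{-(3-2s)/4}$. Squaring and integrating,
$$\int_0^{\sigma(T)}\|\nabla u\|_{L^2}^4\,dt\le 4C_0^{2\delta_0}\int_0^{\sigma(T)}t^{-(3-2s)/2}\,dt.$$
The key point — and the single place where the hypothesis $s>1/2$ is genuinely used — is that $(3-2s)/2<1$, so the time-singular weight is integrable at the origin, with $\int_0^{\sigma(T)}t^{-(3-2s)/2}\,dt=\frac{2}{2s-1}\sigma(T)^{(2s-1)/2}\le\frac{2}{2s-1}$; hence this piece too is $\le CC_0^{2\delta_0}$, with $C$ now also depending on $s$.

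Summing the two regions yields $\int_0^T\|\nabla u\|_{L^2}^4\,dt\le CC_0^{2\delta_0}$, and since $A_1$ and $A_4$ control $\nabla u$ and $\nabla H$ on exactly the same footing, the identical argument gives the corresponding bound for $\int_0^T\|\nabla H\|_{L^2}^4\,dt$. Thus the main obstacle is not any delicate estimate but rather the correct bookkeeping of the time weights across the initial layer, whose integrability is precisely what forces the fractional-regularity threshold $s>1/2$ in the statement.
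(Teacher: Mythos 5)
Your argument is correct and coincides with the paper's own proof: the same splitting at $\sigma(T)$, the same use of $A_4(\sigma(T))\le 2C_0^{\delta_0}$ with the integrability of $t^{(2s-3)/2}$ (requiring $s>1/2$) on the initial layer, and the same pairing of $A_1(T)\le 2C_0^{1/2}$ with the dissipation bound \eqref{basic2} on $[\sigma(T),T]$, concluding via $C_0^{3/2}\le C_0^{2\delta_0}$ for $C_0\le 1$. No gaps.
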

\begin{proof}
By \eqref{key1}, together with \eqref{basic1}, \eqref{basic2}, we have
\begin{align}
&\int_0^T(\|\nabla u\|_{L^{2}}^{4}+\|\nabla H\|_{L^{2}}^{4})dt \nonumber \\
\leq & C \int_0^{\sigma(T)}(\|\nabla u\|_{L^{2}}^{4}+\|\nabla H\|_{L^{2}}^{4})dt
+\int_{\sigma(T)}^T(\|\nabla u\|_{L^{2}}^{4}+\|\nabla H\|_{L^{2}}^{4})dt \nonumber\\
\leq & C \sup_{   0\le t\le \sigma(T)}\left(\sigma^{\frac{3-2s}{4}}(\|\nabla u\|_{L^2}^2+\|\nabla H\|_{L^2}^2)\right)^2\int_0^{\sigma(T)} \sigma^{\frac{2s-3}{2}}dt \nonumber \\
& +C\sup_{\sigma(T)\le t\le T  }\sigma\left(\|\nabla u\|_{L^2}^2+\|\nabla H\|_{L^2}^2\right)\int_{\sigma(T)}^T(\|\nabla u\|_{L^{2}}^{2}+\|\nabla H\|_{L^{2}}^{2})dt \nonumber \\
\leq & CC_0^{2 \delta_0}+CC_0^{3/2} ~\leq  CC_0^{2 \delta_0},
\end{align}
since $s\in (1/2, 1]$ and $\delta_0 \in (0, 1/9]$. The proof of Lemma \ref{lem-basic} is completed.
\end{proof}

Now, we give the estimate of $A_3(T)$.
\begin{lemma}\label{lem-h3}
 Let $(\rho,u,H)$ be a smooth solution of
 \eqref{CMHD}-\eqref{boundary} satisfying \eqref{key1}.
  Then there is a positive constant
  $\ve_1>0 $, depending on $\mu,$ $\lambda,$ $\nu,$ $a$, $\gamma$, $\bar{\rho}$, $\hat{\rho}$ and $\Omega$  such that
  \begin{align}\label{h-l3}
  \displaystyle A_3(T)\leq C_0^{\delta_0},
  \end{align}
provided $C_0\leq \ve_1$.
\end{lemma}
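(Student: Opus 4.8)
The plan is to derive a differential inequality for $\frac{d}{dt}\|H\|_{L^3}^3$ by testing the magnetic equation $\eqref{CMHD1}_3$ against $3|H|H$. Since $\frac{d}{dt}\int|H|^3dx=3\int|H|H\cdot H_t\,dx$, I would substitute $H_t=-u\cdot\na H+H\cdot\na u-H\div u+\nu\lap H$ (using $\div H=0$ to write $-\nu\na\times\curl H=\nu\lap H$) and treat the four terms. The convective term integrates by parts using $u\cdot n=0$ on $\pa\O$ to give $\int\div u\,|H|^3dx$, which together with the compression term $-3\int|H|^3\div u\,dx$ and the stretching term $3\int|H|H^iH^j\pa_ju^i\,dx$ is bounded by $C\int|H|^3|\na u|\,dx$. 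The diffusion term is delicate: integrating by parts,
\[
3\nu\int|H|H\cdot\lap H\,dx=-3\nu\int|H|\big(|\na H|^2+|\na|H||^2\big)dx+3\nu\int_{\pa\O}|H|\,(H\cdot\pa_nH)\,dS,
\]
so the interior part is a good (nonnegative) dissipation that may be discarded, and everything reduces to controlling the boundary integral.

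The boundary term is the main obstacle, since the perfect-conductor conditions do not make it vanish in Laplacian form. I would exploit the two conditions \eqref{boundary} together, exactly as in \eqref{bdd2}: from $H\cdot\na H=\curl H\times H+\tfrac12\na|H|^2$ and the fact that $\curl H\times n=0$ forces $\curl H$ to be normal (so $(\curl H\times H)\cdot n=0$), one gets $n\cdot(H\cdot\na H)=\pa_n(|H|^2/2)=H\cdot\pa_nH$ on $\pa\O$; differentiating $H\cdot n=0$ along the tangential field $H$ gives $n\cdot(H\cdot\na H)=-H\cdot\na n\cdot H$. Hence $H\cdot\pa_nH=-H\cdot\na n\cdot H$ on $\pa\O$, so $|H\cdot\pa_nH|\le C|H|^2$ with $C$ depending only on the curvature of $\pa\O$. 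The boundary term is then dominated by $C\int_{\pa\O}|H|^3dS$, which I would estimate by the trace embedding $H^1(\O)\hookrightarrow L^3(\pa\O)$ together with the Poincaré inequality $\|H\|_{L^2}\le C\|\curl H\|_{L^2}$, valid for divergence-free tangential fields on the simply connected domain $\O$; with \eqref{tdh1} this gives $\|H\|_{H^1}\le C\|\na H\|_{L^2}$ and hence $\int_{\pa\O}|H|^3dS\le C\|\na H\|_{L^2}^3$.

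For the interior contribution I would use $\int|H|^3|\na u|dx\le\|H\|_{L^6}^3\|\na u\|_{L^2}\le C\|\na H\|_{L^2}^3\|\na u\|_{L^2}$, again by Sobolev and the same Poincaré inequality. This use of Poincaré is essential: it removes the non-time-integrable term $\|H\|_{L^2}^3$ that a bare Sobolev bound would leave and which would otherwise grow linearly in $T$. Collecting everything and discarding the dissipation yields
\[
\frac{d}{dt}\|H\|_{L^3}^3\le C\|\na H\|_{L^2}^3+C\|\na u\|_{L^2}\|\na H\|_{L^2}^3 ,
\]
so no Grönwall loop is needed and I can simply integrate in $t$. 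The right-hand side is controlled uniformly in $T$ by splitting $\int_0^t$ at $\si(T)$: for $s\ge\si(T)$ the sup-bound in $A_1(T)$ from \eqref{key1} gives $\|\na H\|_{L^2}^2\le CC_0^{1/2}$, while for $s\le\si(T)$ the weighted sup-bound in $A_4(T)$ gives $\|\na H\|_{L^2}^2\le CC_0^{\delta_0}\si^{-(3-2s)/4}$ with an integrable singular weight; together with $\int_0^T(\|\na u\|_{L^2}^2+\|\na H\|_{L^2}^2)dt\le CC_0$ from \eqref{basic2} and $\int_0^T(\|\na u\|_{L^2}^4+\|\na H\|_{L^2}^4)dt\le CC_0^{2\delta_0}$ from Lemma \ref{lem-basic2}, both time integrals are bounded by $CC_0^{\beta}$ with $\beta>\delta_0$.

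Finally I would bound the initial value $\|H_0\|_{L^3}^3$ by interpolating $L^3$ between $L^2$ and the embedding $H^s\hookrightarrow L^{6/(3-2s)}$ (valid for $s>1/2$): with $\|H_0\|_{L^2}\le CC_0^{1/2}$ from the energy and $\|H_0\|_{H^s}\le M_2$ from \eqref{dt-s}, this gives $\|H_0\|_{L^3}^3\le CC_0^{3(2s-1)/(4s)}$, an exponent again strictly larger than $\delta_0=\frac{2s-1}{9s}$. Since every exponent appearing exceeds $\delta_0$, each term is $\le\frac14C_0^{\delta_0}$ once $C_0\le\ve_1$ is sufficiently small, and summing yields $A_3(T)\le C_0^{\delta_0}$. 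The principal difficulty throughout is the boundary term: extracting the identity $H\cdot\pa_nH=-H\cdot\na n\cdot H$ from the coupled conditions \eqref{navier-b}--\eqref{boundary}, and then arranging every estimate so that only the time-integrable quantities $\|\na u\|_{L^2}^2$ and $\|\na H\|_{L^2}^2$ (and their squares) survive, which is precisely what forces the Poincaré inequality on the simply connected domain.
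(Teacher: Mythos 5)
Your proposal is correct, and the overall skeleton coincides with the paper's: multiply $\eqref{CMHD1}_3$ by $3|H|H$, integrate, control the convection/stretching terms by $\|\nabla u\|_{L^2}\|H\|_{L^6}^3\le C\|\nabla u\|_{L^2}\|\nabla H\|_{L^2}^3$, split the time integral at $\sigma(T)$ using $A_1$ and $A_4$ from \eqref{key1} together with Lemma \ref{lem-basic2}, and interpolate $\|H_0\|_{L^3}^3\le C\|H_0\|_{L^2}^{3(2s-1)/(2s)}\|H_0\|_{H^s}^{3/(2s)}\le CC_0^{27\delta_0/4}$ so that every exponent strictly exceeds $\delta_0$. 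The genuine divergence is in the diffusion term, and it is worth recording. The paper keeps the dissipation in curl form and integrates by parts as $-3\nu\int\curl H\cdot\curl(|H|H)\,dx$; the boundary contribution $\int_{\pa\O}(n\times\curl H)\cdot(|H|H)\,ds$ vanishes outright because $\curl H\times n=0$, so no boundary analysis is needed at all, but the resulting bound $C\|H\|_{L^\infty}\|\nabla H\|_{L^2}^2\le C\|\nabla H\|_{L^2}^{5/2}\|\curl^2 H\|_{L^2}^{1/2}+\dots$ drags in second derivatives of $H$, which must then be absorbed using the $\int\sigma^{(3-2s)/4}\|\curl^2 H\|_{L^2}^2\,dt$ and $\int\sigma\|\curl^2 H\|_{L^2}^2\,dt$ pieces of $A_4$ and $A_1$. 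You instead rewrite the dissipation as $\nu\Delta H$, keep the signed interior part $-3\nu\int|H|(|\na H|^2+|\na|H||^2)\,dx$, and are left with the boundary integral $3\nu\int_{\pa\O}|H|\,H\cdot\pa_nH\,dS$, which you neutralize with the identity $H\cdot\pa_nH=-H\cdot\na n\cdot H$ on $\pa\O$ — correctly derived by combining $(\curl H\times H)\cdot n=0$ (from $\curl H\parallel n$) with tangential differentiation of $H\cdot n=0$, in exact analogy with \eqref{bdd2} — followed by the trace inequality and Lemma \ref{lem-vn}. What your route buys is a purely first-order differential inequality $\frac{d}{dt}\|H\|_{L^3}^3\le C\|\na H\|_{L^2}^3+C\|\na u\|_{L^2}\|\na H\|_{L^2}^3$, avoiding $\curl^2 H$ entirely; what it costs is the boundary computation (plus the usual caveat that $|H|$ should be regularized near its zero set before integrating by parts, a technicality present in the paper's computation as well). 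Both closings are sound.
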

\begin{proof}
Multiplying \eqref{CMHD1}$_3$ by $3|H|H$ and integrating by parts over $\Omega$, we have
\begin{align}\label{h3-1}
\frac{d}{dt}\|H\|^3_{L^3}=&-3\nu \int \curl H \cdot \curl (|H|H) dx
+3 \int |H|H \cdot \nabla u \cdot H dx -2 \int |H|^3 \div u dx \nonumber \\
\leq & C\|H\|_{L^\infty} \|\nabla H\|_{L^2}^2+ C \|\nabla u\|_{L^2}\|H\|_{L^6}^3 \nonumber \\
\leq & C \|\nabla H\|^{5/2}_{L^2}\|\curl^2  H\|_{L^2}^{1/2}+C \|\nabla H\|^{2}_{L^2}+ C \|\nabla H\|^{4}_{L^2}+C \|\nabla u\|_{L^2}^4,
 \end{align}
which together with \eqref{key1} and \eqref{basic3} indicates that
\begin{align}\label{h3-2}
&\displaystyle \sup_{0\le t\le T}\|H\|_{L^{3}}^{3} \nonumber \\
\leq & \|H_0\|_{L^{3}}^{3}+C \int_0^T \|\nabla H\|^{5/2}_{L^2}\|\curl^2  H\|_{L^2}^{1/2}dt +C C_0+ C C_0^{2 \delta_0} \nonumber \\
\leq & \|H_0\|_{L^{3}}^{3}+C \int_0^{\sigma(T)}
\left(\sigma^{\frac{3-2s}{4}}\|\nabla H\|_{L^2}^2\right)^{5/4}\left(\sigma^{\frac{3-2s}{4}}\|\curl^2  H\|_{L^2}^2\right)^{1/4}\sigma^{\frac{3(2s-3)}{8}} dt \nonumber \\
&+C\|\nabla H\|_{L^2} \int_{\sigma(T)}^T \left(\|\nabla H\|_{L^2}^2\right)^{3/4}\left(\sigma\|\curl^2  H\|_{L^2}^2\right)^{1/4}dt +C C_0+ C C_0^{2 \delta_0} \nonumber \\
\leq & C_1C_0^{3 \delta_0/2}
\end{align}
where in the last inequality we have used the simple fact
\begin{align}\label{h3-3}
\displaystyle  \|H_0\|_{L^{3}}^{3} \leq C \|H_0\|_{L^{2}}^{\frac{3(2s-1)}{2s}}\|H_0\|_{H^{s}}^{\frac{3}{2s}} \leq C(M_2)C_0^{27 \delta_0/4}.
\end{align}
Thus it follows from \eqref{h3-3} that \eqref{h-l3} holds provided
$C_0 \leq \ve_1 \triangleq \min \{1, C_1^{-\frac{2}{\delta_0}}\}.$ 
The proof of \ref{lem-h3} is completed.
\end{proof}

The following lemma shows the preliminary $L^2$ bounds for $\nabla H$.
\begin{lemma}\label{lem-tdh}
 Let $(\rho,u,H)$ be a smooth solution of
 \eqref{CMHD}--\eqref{boundary} satisfying \eqref{key1}.
  Then there is a positive constant  $C$ such that
 \begin{align}\label{tdh}
  \displaystyle \sup_{0\le t\le T  }\left(\sigma\|\nabla H\|_{L^2}^2\right)+\int_0^{T} \sigma
 \left(\|\curl^2 H\|_{L^2}^2+\|H_t\|_{L^2}^2\right)dt \leq CC_0.
  \end{align}
Moreover, for any $\theta \in [0,1]$, one has
\begin{align}\label{tdh-s}
  \displaystyle \sup_{0\le t\le T  }\left(\sigma^{1-\theta}\|\nabla H\|_{L^2}^2\right)+\int_0^{T} \sigma^{1-\theta}
 \left(\|\curl^2 H\|_{L^2}^2+\|H_t\|_{L^2}^2\right)dt \leq C\|H_0\|_{H^\theta}^2.
  \end{align}
\end{lemma}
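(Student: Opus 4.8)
The plan is to run a weighted energy estimate on the magnetic equation $\eqref{CMHD1}_3$, exploiting that $\div H=0$ makes $\nabla\times\curl H=\curl^2 H$ and that, by \eqref{tdh1}, the norms $\|\curl H\|_{L^2}$ and $\|\nabla H\|_{L^2}$ are equivalent. First I would test $\eqref{CMHD1}_3$ against $\curl^2 H$. Because of the boundary conditions $H\cdot n=0$, $\curl H\times n=0$ in \eqref{boundary} (so that $\curl H$ is normal on $\partial\Omega$ and the boundary integral produced by integration by parts vanishes), this yields the identity
\[
\tfrac12\tfrac{d}{dt}\|\curl H\|_{L^2}^2+\nu\|\curl^2 H\|_{L^2}^2=-\int\big(u\cdot\nabla H-H\cdot\nabla u+H\div u\big)\cdot\curl^2 H\,dx .
\]

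Next I would bound the right-hand side by $\|\curl^2 H\|_{L^2}$ times the $L^2$-norms of the three coupling terms, estimating the latter by H\"older together with the Sobolev/interpolation bounds $\|u\|_{L^6},\|H\|_{L^6}\le C\|\nabla u\|_{L^2},\,C\|\nabla H\|_{L^2}$ (via \eqref{tdu1}, \eqref{tdh1}) and $\|\nabla H\|_{L^6}\le C(\|\curl^2 H\|_{L^2}+\|\nabla H\|_{L^2})$ (via \eqref{2tdh}). Since every coupling term is at least linear in $H$, a Young's inequality absorbing $\tfrac{\nu}{2}\|\curl^2 H\|_{L^2}^2$ into the left side leaves every remaining term of the purely multiplicative form $\Phi(t)\|\nabla H\|_{L^2}^2$, giving the clean differential inequality
\[
\tfrac{d}{dt}\|\nabla H\|_{L^2}^2+\nu\|\curl^2 H\|_{L^2}^2\le C\Phi(t)\|\nabla H\|_{L^2}^2 ,
\]
where $\Phi$ is a combination of $\|\nabla u\|_{L^2}^2$ and quantities already controlled, whose (time-weighted) integral is small under the a priori bounds \eqref{key1} together with \eqref{basic2} and \eqref{basic3}. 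Multiplying by $\sigma$ and integrating, the only non-multiplicative term $\sigma'\|\nabla H\|_{L^2}^2$ is controlled by $\int_0^T\|\nabla H\|_{L^2}^2\,dt\le CC_0$, and a Gr\"onwall/absorption argument yields the $\nabla H$ and $\curl^2 H$ parts of \eqref{tdh}; the $H_t$ part then follows by reading $\|H_t\|_{L^2}\le\nu\|\curl^2 H\|_{L^2}+\text{(coupling terms)}$ off the equation.

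For the refined bound \eqref{tdh-s} the key observation is that, for a \emph{fixed} velocity field $u$, the magnetic equation is linear in $H$, so the solution map $H_0\mapsto H$ is a bounded linear operator. I would first establish the two endpoints: the case $\theta=0$ is exactly \eqref{tdh} with $CC_0$ sharpened to $C\|H_0\|_{L^2}^2$, while $\theta=1$ is the weight-free version of the same estimate, which closes because one may start from $\|\nabla H_0\|_{L^2}^2\le\|H_0\|_{H^1}^2$. The intermediate case $\theta\in(0,1)$ then follows by real interpolation, using $[L^2,H^1]_{\theta,2}=H^\theta$ and the fact that the $\sigma^{1-\theta}$-weighted target norms interpolate between the $\sigma^0$ and $\sigma^1$ endpoint norms. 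Equivalently, one can argue by hand through Duhamel against the semigroup $e^{\nu t\Delta}$, using the Plancherel identity $\int_0^\infty t^{-\theta}\|\nabla e^{\nu t\Delta}H_0\|_{L^2}^2\,dt\le C\|H_0\|_{\dot H^\theta}^2$ and absorbing the nonlinear Duhamel contribution by the smallness of $C_0$.

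The main obstacle is precisely this initial-layer analysis in \eqref{tdh-s}: the singular weight $\sigma^{-\theta}$ produced when differentiating $\sigma^{1-\theta}\|\nabla H\|_{L^2}^2$ must be balanced against fractional initial regularity. A crude pointwise smoothing bound $\|\nabla H\|_{L^2}^2\lesssim t^{-(1-\theta)}\|H_0\|_{\dot H^\theta}^2$ is not enough, since it renders $\int_0^{\sigma(T)}\sigma^{-\theta}\|\nabla H\|_{L^2}^2\,dt$ logarithmically divergent; one genuinely needs the sharper \emph{integrated} smoothing encoded in the interpolation identity above. This is exactly why the hypothesis is stated in terms of $\|H_0\|_{H^\theta}$ (ultimately $\|H_0\|_{H^s}$ in \eqref{dt-s}) rather than $\|\nabla H_0\|_{L^2}$, and it is the step demanding the most care.
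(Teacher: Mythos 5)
Your proposal is correct and follows essentially the same route as the paper: an energy estimate at the level of $\curl H$ (the paper obtains the identity by testing $\eqref{CMHD1}_3$ against $H_t+\nu\nabla\times\curl H$, which yields $\|H_t\|_{L^2}^2$ and $\|\curl^2 H\|_{L^2}^2$ simultaneously, whereas you test against $\curl^2 H$ and recover $H_t$ from the equation afterwards — an equivalent computation), followed by absorption, Gr\"onwall with the weight $\sigma$, and then the two endpoint estimates $\theta=0,1$ combined with Stein--Weiss interpolation using the linearity of the solution map $H_0\mapsto H$ for fixed $u$. Your closing discussion of why the interpolation must be done at the level of the integrated estimates rather than via pointwise smoothing accurately reflects the role of the $H^s$ hypothesis in \eqref{dt-s}.
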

\begin{proof}
Multiplying \eqref{CMHD1}$_3$ by $H$ and integrating by parts over $\Omega$, by \eqref{boundary}, \eqref{g1} and \eqref{tdh1}, we have
\begin{align}\label{tdh-1}
\displaystyle \left(\frac{1}{2}\|H\|_{L^2}^2\right)_t+\nu \|\curl H\|_{L^2}^2\leq
\|\nabla u\|_{L^2}\|H\|^2_{L^4}\leq \frac{\nu}{2}\|\curl H\|_{L^2}^2+C\|\nabla u\|^4_{L^2}\|H\|^2_{L^2},
\end{align}
which together with \eqref{tdh1}, \eqref{basic3} and Gronwall inequality gives
\begin{align}\label{tdh-2}
\displaystyle \sup_{0\le t\le T}\|H\|_{L^2}^2+\int_0^T\|\nabla H\|_{L^2}^2dt \leq
C\|H_0\|^2_{L^2}.
\end{align}
By Lemma \ref{lem-f-td}, one easily deduces from \eqref{CMHD1}$_3$ and \eqref{boundary} that
\begin{align}\label{tdh-3}
\displaystyle & \left(\frac{\nu}{2}\|\curl H\|_{L^2}^2\right)_t+\nu^2 \|\curl^2  H\|_{L^2}^2+\|H_t\|^2_{L^2}\nonumber \\
\leq & \int |H \cdot \nabla u-u \cdot \nabla H-H \div u|^2 dx \nonumber \\
\leq & C \|\nabla u\|^2_{L^2}\|H\|^2_{L^\infty}+C\|u\|^2_{L^6}\|\nabla H\|^2_{L^3}\nonumber \\
\leq & C \|\nabla u\|^2_{L^2}\|\nabla H\|_{L^2}\|\curl^2  H\|_{L^2}+C\|\nabla u\|^2_{L^2}\|\nabla H\|_{L^2}^2\nonumber \\
\leq & \frac{\nu^2}{2}\|\curl^2  H\|^2_{L^2}+C(\|\nabla u\|^2_{L^2}+\|\nabla u\|^4_{L^2})\|\nabla H\|_{L^2}^2,
\end{align}
using \eqref{tdh1}, \eqref{basic3} and Gronwall inequality, we get
\begin{align}\label{tdh-4}
  \displaystyle \sup_{0\le t\le T  }\|\nabla H\|_{L^2}^2+\int_0^{T}
 \left(\|\curl^2 H\|_{L^2}^2+\|H_t\|_{L^2}^2\right)dt \leq C\|\nabla H_0\|^2_{L^2}.
  \end{align}
On the other hand, multiplying \eqref{tdh-3} by $\sigma$ and integrating it over $(0,T)$, by \eqref{basic3} and \eqref{tdh-2}, we obtain
\begin{align}\label{tdh-5}
\displaystyle \sup_{0\le t\le T  }\left(\sigma\|\nabla H\|_{L^2}^2\right)+\int_0^{T} \sigma
 \left(\|\curl^2 H\|_{L^2}^2+\|H_t\|_{L^2}^2\right)dt \leq C\|H_0\|^2_{L^2} \leq CC_0.
\end{align}
This finishes the proof of \eqref{tdh}.
Note that for fixed $u$ (smooth), the solution operator $H_{0}\mapsto H(\cdot,t)$ is linear, by the standard Stein-Weiss interpolation argument \cite{bl}, one can deduce from \eqref{tdh-4} and \eqref{tdh-5} that \eqref{tdh-s} holds for any $\theta\in [0,1]$. The proof of Lemma \ref{lem-tdh} is completed.
\end{proof}

Then, we give the estimate of $A_1(T)$ and $A_2(T)$.
\begin{lemma}\label{lem-a0}
 Let $(\rho,u,H)$ be a smooth solution of
 \eqref{CMHD}-\eqref{boundary} satisfying \eqref{key1}.
  Then there is a positive constant
  $\ve_2 $ depending only  on $\mu$, $\lambda$, $\nu$, $a$, $\gamma$, $\bar{\rho}$, $\hat{\rho}$ and $\Omega$ such that
 \begin{align}\label{a01}
 \displaystyle  A_1(T) \le  C C_0 + C\int_0^{T}\int\sigma|\nabla u|^3dx dt,
 \end{align}
 \begin{align}\label{a02}
 \displaystyle  A_2(T)
    \le   C C_0^{1/2+2 \delta_0/3} + CA_1(T)  + C\int_0^{T}\int \sigma^2 |\nabla u|^4 dxdt,
 \end{align}
provided $C_0\leq \ve_2$.
\end{lemma}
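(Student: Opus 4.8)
\emph{Overview.} The plan is to establish \eqref{a01}--\eqref{a02} by the material-derivative energy method, testing the momentum equation against $\dot u$ (for $A_1$) and against its own material derivative (for $A_2$), while the slip-induced boundary integrals are controlled through the identity \eqref{bdd3}. A key observation is that the magnetic contributions to $A_1(T)$, i.e. $\sup_t\sigma\|\nabla H\|_{L^2}^2$ together with $\int_0^T\sigma(\|\curl^2 H\|_{L^2}^2+\|H_t\|_{L^2}^2)\,dt$, are already bounded by $CC_0$ in Lemma \ref{lem-tdh}; hence for \eqref{a01} it remains only to bound the velocity part $\sup_t\sigma\|\nabla u\|_{L^2}^2+\int_0^T\sigma\|\rho^{1/2}\dot u\|_{L^2}^2\,dt$.

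\emph{Proof of \eqref{a01}.} I would test \eqref{2m2}, i.e. $\rho\dot u=\nabla F-\mu\nabla\times\omega+H\cdot\nabla H$, by $\dot u$ and integrate over $\Omega$. Integration by parts turns $-\mu\int(\nabla\times\omega)\cdot\dot u$ and $\int\nabla F\cdot\dot u$ into $-\mu\int\omega\cdot(\curl\dot u)-\int F\,\div\dot u$ plus boundary terms; the $\omega$-boundary term vanishes because $\omega\times n=0$ on $\partial\Omega$ by \eqref{navier-b}, while $\int_{\partial\Omega}F\,\dot u\cdot n$ is handled by \eqref{bdd3}, which replaces $\dot u\cdot n$ by the quadratic expression $-((u\cdot\nabla n)\times u^\perp)\cdot n$ and is thus of lower order. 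Using the commutator identities $\div\dot u=\tfrac{D}{Dt}\div u+(\nabla u):(\nabla u)^{\top}$ and $\curl\dot u=\tfrac{D}{Dt}\omega+(\text{quadratic in }\nabla u)$, the leading pieces of $-\int F\,\div\dot u-\mu\int\omega\cdot(\curl\dot u)$ produce $-\tfrac{d}{dt}\big(\tfrac{\lambda+2\mu}{2}\|\div u\|_{L^2}^2+\tfrac{\mu}{2}\|\omega\|_{L^2}^2\big)$ modulo the pressure work terms (handled via the continuity equation \eqref{CMHD1}$_1$ as in \cite{cl2019,lxz2013}) and cubic remainders of the form $\int|\nabla u|^3$; the magnetic term $\int(H\cdot\nabla H)\cdot\dot u$ is, after splitting $\dot u=u_t+u\cdot\nabla u$ and converting the $u_t$-part into time-derivative-exact quantities, reduced to terms controlled by Lemma \ref{lem-tdh} and by $A_3(T)\le 2C_0^{\delta_0}$. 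Multiplying the resulting differential inequality by $\sigma$, integrating in $t$, and using $|\sigma'|\le 1$, Lemma \ref{lem-basic} and \eqref{basic3} yields \eqref{a01} with the cubic term $C\int_0^T\sigma\int|\nabla u|^3\,dxdt$ left on the right-hand side.

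\emph{Proof of \eqref{a02}.} For the velocity part I would apply the operator $\partial_t+\div(u\,\cdot)$ to \eqref{2m2} and test with $\dot u$; combined with \eqref{CMHD1}$_1$ this is the standard manipulation giving $\tfrac12\tfrac{d}{dt}\int\rho|\dot u|^2+\int\big[(\lambda+2\mu)(\div\dot u)^2+\mu|\curl\dot u|^2\big]\,dx$ on the left, with a right-hand side consisting of cubic and quartic terms in $\nabla u$, pressure contributions, and magnetic coupling terms, the slip boundary again entering only through \eqref{bdd2}--\eqref{bdd3}. The dissipation is converted into $\|\nabla\dot u\|_{L^2}^2$ by \eqref{tdudot} at the cost of $C\|\nabla u\|_{L^4}^4$. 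Multiplying by $\sigma^2$ and integrating, the genuinely quartic term is retained as $C\int_0^T\sigma^2\int|\nabla u|^4\,dxdt$, while the remaining terms are absorbed into $CA_1(T)$ (the cubic ones, after a $\sigma$-weighted Cauchy--Schwarz) and into $CC_0^{1/2+2\delta_0/3}$, the latter exponent arising from interpolating the magnetic nonlinearities against $A_3(T)\le 2C_0^{\delta_0}$ together with $A_1(T)+A_2(T)\le 2C_0^{1/2}$ from \eqref{key1}. The magnetic part of $A_2(T)$ is obtained separately: differentiating \eqref{CMHD1}$_3$ in $t$, testing with $H_t$, and using \eqref{tdh1} to pass from $\|\curl H_t\|_{L^2}$ to $\|\nabla H_t\|_{L^2}$, controls $\sup_t\sigma^2\|H_t\|_{L^2}^2+\int_0^T\sigma^2\|\nabla H_t\|_{L^2}^2\,dt$; the quantity $\|\curl^2 H\|_{L^2}$ is then recovered algebraically from \eqref{CMHD1}$_3$, and all forcing is bounded via Lemma \ref{lem-tdh}, \eqref{basic3} and $A_3$.

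\emph{Main obstacle.} The principal difficulty is the systematic treatment of the boundary integrals generated by the slip conditions: every integration by parts against $\dot u$ (and, for \eqref{a02}, against its time/material derivative) produces surface terms on $\partial\Omega$ that do not vanish outright, and their control hinges on the geometric identity \eqref{bdd3} and on the curvature factor $\nabla n$. Closely related is the bookkeeping of the exact powers of $C_0$: one must verify that, after the $\sigma$- and $\sigma^2$-weights and the interpolations using $A_3\le 2C_0^{\delta_0}$, the magnetic and cross terms indeed land at or below $C_0^{1/2+2\delta_0/3}$, so that \eqref{a02} closes. The cubic $\int\sigma|\nabla u|^3$ and quartic $\int\sigma^2|\nabla u|^4$ integrals are deliberately not estimated here—they are dealt with in the subsequent lemmas, which is precisely why they are left on the right-hand sides of \eqref{a01}--\eqref{a02}.
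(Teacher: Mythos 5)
Your proposal follows essentially the same route as the paper: testing the momentum equation against $\sigma\dot u$ for \eqref{a01}, applying $\partial_t+\div(u\,\cdot)$ to \eqref{2m2} and testing with $\sigma^2\dot u$ together with the time-differentiated magnetic equation tested with $H_t$ for \eqref{a02}, handling the slip boundary integrals through \eqref{bdd2}--\eqref{bdd3}, and recovering $\|\curl^2 H\|_{L^2}$ algebraically from \eqref{CMHD1}$_3$. The only point worth making explicit is that the $\dot u$- and $H_t$-energy inequalities do not close separately --- each produces the other's dissipation ($\|\nabla H_t\|_{L^2}^2$, resp. $\|\nabla\dot u\|_{L^2}^2$) on its right-hand side with a small prefactor of order $C_0^{2\delta_0/3}$ or $C_0^{\delta_0/3}$, so they must be added and the cross terms absorbed into the left side, which is precisely where the smallness threshold $\ve_2$ arises.
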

\begin{proof}
Let $m\ge 0$ be a real number which will be determined later. Multiplying $\eqref{CMHD}_2 $ by
$\sigma^m \dot{u}$ and then integrating the reslting equality over
$\Omega$ lead  to
\begin{align}\label{I0}
\int \sigma^m \rho|\dot{u}|^2dx &
= -\int\sigma^m \dot{u}\cdot\nabla Pdx + (\lambda+2\mu)\int\sigma^m \nabla\div u\cdot\dot{u}dx \nonumber\\
&\quad - \mu\int\sigma^m \nabla\times\omega\cdot\dot{u}dx
+\int\sigma^m (H \cdot \nabla H- \nabla|H|^2/2)\cdot\dot{u}dx \nonumber\\
& \triangleq I_1+I_2+I_3+I_4.
\end{align}
We neglect the calculation details similar to that in \cite{cl2019} and only show the part related to the magnetic field and the boundary terms.
Firstly, By $\eqref{CMHD}_1$ and Lemma \ref{lem-f-td}, a direct calculation gives
\begin{align}\label{I1}
I_1 
= & -\int\sigma^m u_{t}\cdot\nabla(P-\bar{P})dx
- \int\sigma^m u\cdot\nabla u\cdot\nabla Pdx \nonumber\\
= & \left(\int\sigma^m(P-\bar{P})\,\div u\, dx\right)_{t} - m\sigma^{m-1}\sigma'\int(P-\bar{P})\,\div u\,dx \nonumber\\
&+ \int\sigma^{m}P\nabla u:\nabla u dx + (\gamma-1)\int\sigma^{m}P(\div u)^{2}dx - \int_{\partial\Omega}\sigma^{m}Pu\cdot\nabla u\cdot n ds\nonumber\\
\leq &\left(\int\sigma^m(P-\bar{P})\,\div u\, dx\right)_{t} + C \|\nabla u \|_{L^2}^2+ C\|\rho-\bar{\rho}\|_{L^2}^2.
\end{align}

Similarly, by \eqref{bdd2}, it indicates that
\begin{align}\label{I20}
I_2 
& = (\lambda+2\mu)\int_{\partial\Omega}\sigma^m\div u\,(\dot{u}\cdot n)ds - (\lambda+2\mu)\int\sigma^m\div u\,\div \dot{u}dx  \nonumber\\
& = (\lambda+2\mu)\int_{\partial\Omega}\sigma^m\div u\,(u\cdot\nabla u\cdot n)ds - \frac{\lambda+2\mu}{2}\left(\int\sigma^{m}(\div u)^{2}dx\right)_{t} \nonumber\\
&\quad +\frac{\lambda+2\mu}{2}\int\sigma^{m}(\div u)^{3}dx- (\lambda+2\mu)\int\sigma^m\div u\,\nabla u:\nabla udx  \nonumber\\
&\quad + \frac{m(\lambda+2\mu)}{2}\sigma^{m-1}\sigma'\int(\div u)^{2}dx
\end{align}
For the first term on the righthand side of \eqref{I20}, we obtain
\begin{align}\label{I21}
& (\lambda+2\mu)\int_{\partial\Omega}\sigma^m\div u\,(u\cdot\nabla u\cdot n)ds \nonumber \\
= & -\int_{\partial\Omega}\sigma^m Fu\cdot\nabla n\cdot uds-\int_{\partial\Omega}\sigma^m(P-\bar{P})u\cdot\nabla n\cdot uds-\int_{\partial\Omega}\sigma^m \frac{|H|^2}{2}u\cdot\nabla n\cdot uds \nonumber \\
\leq & C\left(\int_{\partial\Omega}\sigma^m|F||u|^{2}ds+\int_{\partial\Omega}\sigma^m|u|^{2}ds+\int_{\partial\Omega}\sigma^m|H|^2|u|^{2}ds\right)\nonumber \\
\leq & C\sigma^m(\|\nabla F\|_{L^{2}}\|u\|_{L^{4}}^{2}+\|F\|_{L^{6}}\|u\|_{L^{3}}\|\nabla u\|_{L^{2}}+\|F\|_{L^{2}}\| u\|^2_{L^{4}})+C\sigma^m\|\nabla u\|_{L^{2}}^2\nonumber\\
    &+C\sigma^m(\|\nabla H\|_{L^{2}}\|H\|_{L^{6}}\|u\|_{L^{6}}^{2}+\|H\|_{L^{6}}^2\|u\|_{L^{6}}\|\nabla u\|_{L^{2}}^{2}+\|H\|_{L^{4}}^{2}\|u\|_{L^{4}}^{2})\nonumber \\
\leq & \frac{1}{2}\sigma^m\|\rho\dot{u}\|_{L^2}^2+C \sigma^m\|\curl^2  H\|_{L^2}^2+C\sigma^m(\|\nabla u\|_{L^{2}}^2+\|\nabla H\|_{L^{2}}^2)(\|\nabla u\|_{L^{2}}^2+1),
\end{align}
where we have used
\begin{align}\label{h-tdh}
\displaystyle \|H \cdot \nabla H\|_{L^2}\leq C \|H\|_{L^3}\|\nabla H\|_{L^6} \leq C C_0^{\delta_0/3}(\|\curl^2  H\|_{L^2}+\|\nabla H\|_{L^2}).
\end{align}

Therefore,
\begin{align}\label{I2}
\displaystyle  I_2 \leq &  - \frac{\lambda+2\mu}{2}\left(\int\sigma^{m}(\div u)^{2}dx\right)_t+C\sigma^{m}\|\nabla u\|_{L^{3}}^{3}+\frac{1}{2}\sigma^m\|\rho\dot{u}\|_{L^2}^2+C \sigma^m\|\curl^2  H\|_{L^2}^2\nonumber \\
& +C\sigma^m(\|\nabla u\|_{L^{2}}^2+\|\nabla H\|_{L^{2}}^2)\|\nabla u\|_{L^{2}}^2+C(\|\nabla u\|_{L^{2}}^{2}+\|\nabla H\|_{L^{2}}^{2}).
\end{align}
Next, by \eqref{navier-b}, a straightforward computation shows that
\begin{align}\label{I3}
\displaystyle  I_3 
& = -\frac{\mu}{2}\left(\int\sigma^{m}|\omega|^{2}dx\right)_t + \frac{\mu m}{2}\sigma^{m-1}\sigma'\int|\omega|^{2}dx   \nonumber\\
& \quad - \mu\int\sigma^{m}(\nabla u^{i}\times\nabla_i u)\cdot\omega dx + \frac{\mu}{2}\int\sigma^{m}|\omega|^{2}\,\div udx\nonumber\\
& \leq -\frac{\mu}{2}\left(\int\sigma^{m}|\omega|^{2}dx\right)_t + Cm\sigma^{m-1}\sigma'\|\nabla u\|_{L^{2}}^{2} + C\sigma^{m}\|\nabla u\|_{L^{3}}^{3}.
\end{align}
Finally, by \eqref{navier-b}, a direct calculation yields
\begin{align}\label{I4}
I_4 
 = & \left(\int\sigma^m (H \cdot \nabla H- \nabla|H|^2/2)\cdot u dx\right)_t-m\sigma^{m-1}\sigma'\int (H \cdot \nabla H- \nabla|H|^2/2)\cdot u dx \nonumber \\
      &+\int\sigma^m \big((H \otimes H)_t:\nabla u- (|H|^2/2)_t\div u\big)dx +\int\sigma^m (H \cdot \nabla H- \nabla|H|^2/2)\cdot u \cdot \nabla u dx \nonumber \\
 \leq & \left(\int\sigma^m (H \cdot \nabla H- \nabla|H|^2/2)\cdot u dx\right)_t+C(\|\nabla H\|_{L^2}^2+\|\nabla u\|_{L^2}^2) \nonumber \\
      &+C\sigma^m(\|H_t\|_{L^2}^{2}+\|\curl^2  H\|_{L^2}^2)+C\sigma^m \|\nabla u\|_{L^3}^3+C\sigma^m\|\nabla H\|_{L^2}^2\|\nabla u\|_{L^2}^2 \nonumber \\
      &+ C\sigma^m\|\nabla H\|_{L^2}^2(\|\nabla H\|_{L^2}^4+\|\nabla u\|_{L^2}^4),
\end{align}
Making use of the results \eqref{I1}, \eqref{I2},\eqref{I3} and \eqref{I4}, it follows from $\eqref{I0}$ that
\begin{align}\label{I01}
&\left((\lambda+2\mu)\int\sigma^{m}(\div u)^{2}dx+\mu\int\sigma^{m}|\omega|^{2}dx\right)_{t}+\int\sigma^{m}\rho|\dot{u}|^{2}dx \nonumber \\
\leq &\left(\int\sigma^{m}(P-\bar{P})\,\div udx\right)_{t}+\left(\int\sigma^m (H \cdot \nabla H- \nabla|H|^2/2)\cdot u dx\right)_t \nonumber \\
     & +C(\|\rho-\bar{\rho}\|_{L^2}^2+\|\nabla H\|_{L^2}^2+\|\nabla u\|_{L^2}^2)+C\sigma^{m}\|\nabla u\|_{L^{3}}^{3} \nonumber \\
     &+C\sigma^m(\|H_t\|_{L^2}^{2}+\|\curl^2  H\|_{L^2}^2)+ C\sigma^m\|\nabla H\|_{L^2}^2(\|\nabla H\|_{L^2}^4+\|\nabla u\|_{L^2}^4) \nonumber \\
     & +C\sigma^m(\|\nabla u\|_{L^{2}}^2+\|\nabla H\|_{L^{2}}^2)(\|\nabla u\|_{L^{2}}^2+1),
\end{align}
integrating over $(0,T]$, by \eqref{tdu1}, \eqref{rho2-int}, Lemma \ref{lem-basic} and Young's inequality, we conclude that for any $m>0$,
\begin{align}\label{I02}
\displaystyle  &\sigma^{m}\|\nabla u\|_{L^{2}}^{2}+\int_0^T\int\sigma^{m}\rho|\dot{u}|^{2}dxdt \nonumber \\
\leq & CC_{0}+C\int_0^T \sigma^m(\|H_t\|_{L^2}^{2}+\|\curl^2  H\|_{L^2}^2) dt\nonumber \\
     &+C\int_0^T\sigma^{m}(\|\nabla H\|_{L^2}^2+\|\nabla u\|_{L^2}^2)(\|\nabla H\|_{L^2}^4+\|\nabla u\|_{L^2}^4)dt \nonumber \\
     &+C\int_0^T\sigma^{m}\|\nabla u\|_{L^{2}}^2(\|\nabla u\|_{L^{2}}^2+\|\nabla H\|_{L^{2}}^2)dt+C\int_0^T\sigma^{m}\|\nabla u\|_{L^{3}}^{3}dt.
\end{align}
Choose $m=1,$ together with \eqref{key1}, \eqref{basic3} and \eqref{tdh}, we obtain $\eqref{a01}$.

Now we will claim \eqref{a02}. Operating $ \sigma^{m}\dot{u}^{j}[\pa/\pa t+\div
(u\cdot)] $ to $ (\ref{2m2})^j,$ summing with respect to $j$, and integrating over $\Omega,$ together with $ \eqref{CMHD}_1 $, we get
\begin{align}\label{J0}
\displaystyle  &\left(\frac{\sigma^{m}}{2}\int\rho|\dot{u}|^{2}dx\right)_t-\frac{m}{2}\sigma^{m-1}\sigma'\int\rho|\dot{u}|^{2}dx \nonumber \\
& = \int\sigma^{m}(\dot{u}\cdot\nabla F_t+\dot{u}^{j}\,\div(u\partial_jF))dx \nonumber \\
&\quad+\mu\int\sigma^{m}(-\dot{u}\cdot\nabla\times\omega_t-\dot{u}^{j}\div((\nabla\times\omega)^j\,u))dx \nonumber \\
&\quad+\int\sigma^{m}(\dot{u}\cdot(\div (H\otimes H))_t+\dot{u}^{j}\div((\div (H\otimes H^j)\,u))dx \nonumber \\
& \triangleq J_1+ J_2+ J_3.
\end{align}
Let us estimate $J_1, J_2$ and $J_3$.
By \eqref{navier-b} and \eqref{CMHD1}$_1$, a direct computation yields
\begin{align}\label{J10}
J_1 
& = \int_{\partial\Omega}\sigma^{m}F_t\dot{u}\cdot nds-\int\sigma^{m}F_t\,\div\dot{u}dx- \int\sigma^{m}u \cdot \nabla\dot{u}^j\partial_jFdx \nonumber \\
& = \int_{\partial\Omega}\sigma^{m}F_t\dot{u}\cdot nds - (\lambda+2\mu)\int\sigma^{m}(\div\dot{u})^{2}dx + (\lambda+2\mu)\int\sigma^{m}\div\dot{u}\,\nabla u:\nabla udx \nonumber\\
& \quad -\gamma\int\sigma^{m} P\div\dot{u}\,\div udx+\int\sigma^{m}\div \dot{u}\, u \cdot \nabla F dx-\int\sigma^{m}u \cdot\nabla\dot{u}^j\partial_jF dx \nonumber \\
 &\quad +\int\sigma^{m}\div\dot{u}\,H \cdot H_tdx+\int\sigma^{m}\div\dot{u}\,u \cdot \nabla H \cdot H dx \nonumber\\
& \leq \int_{\partial\Omega}\sigma^{m}F_t\dot{u}\cdot nds - (\lambda+2\mu)\int\sigma^{m}(\div\dot{u})^{2}dx + \frac{\delta}{12}\sigma^{m}\|\nabla\dot{u}\|\ltwo+C\sigma^m \|\nabla u\|^4_{L^4}\nonumber\\
& \quad+C\sigma^m \big(\|\nabla u\|_{L^2}^2 \|\nabla F\|_{L^3}^2+ C_0^{2 \delta_0/3}\|\nabla H_t\|_{L^2}^2+\|\nabla u\|_{L^2}^2\|\nabla H\|_{L^2}^2\|\nabla H\|_{L^6}^2\big)
\end{align}
where in the second equality we have used
\begin{align*}
\displaystyle F_t
&=(2\mu+\lm)\div\dot  u-(2\mu+\lm)\na u:\na u  - u\cdot\na F+u \cdot \nabla H \cdot H+\ga P\div u-H \cdot H_t.
\end{align*}
For the first term on the righthand side of \eqref{J10}, we have
\begin{align}\label{J11}
&\int_{\partial\Omega}\sigma^{m}F_t\dot{u}\cdot nds=-\int_{\partial\Omega}\sigma^{m}F_t\,(u\cdot\nabla n\cdot u)ds \nonumber\\
= & -\left(\int_{\partial\Omega}\sigma^{m}(u\cdot\nabla n\cdot u)Fds\right)_t+m\sigma^{m-1}\sigma'\int_{\partial\Omega}(u\cdot\nabla n\cdot u)Fds \nonumber\\
&\quad +\int_{\partial\Omega}\sigma^{m}\big(F\dot{u}\cdot\nabla n \cdot u+Fu\cdot\nabla n \cdot\dot{u}\big)ds \nonumber \\
&\quad  -\int_{\partial\Omega}\sigma^{m}\big(F(u \cdot \nabla) u\cdot\nabla n \cdot u+Fu\cdot\nabla n \cdot(u \cdot \nabla) u\big)ds\nonumber\\
\leq& -\left(\int_{\partial\Omega}\sigma^{m}(u\cdot\nabla n\cdot u)Fds\right)_t+Cm\sigma^{m-1}\sigma'\|\nabla u\|_{L^2}^{2}\|F\|_{H^1} \nonumber\\
&\quad+\frac{\delta}{12}\sigma^{m}\|\nabla\dot{u}\|_{L^2}^2+C\sigma^{m}\|\nabla u\|_{L^2}^{4}+C\sigma^{m}\|\nabla u\|_{L^2}^{2}\|F\|_{H^1}^{2} \nonumber\\
&\quad +C\|\na F\|_{L^6} \|\na u\|^3_{L^2}+ C  \|  F\|_{H^1}\|\na u\|_{L^2} \left(\|\na u\|^2_{L^4} +\|\na u\|^2_{L^2}\right).
\end{align}
Together with Lemma \ref{lem-f-td}, \eqref{tdxd-u1},\eqref{udot},\eqref{tdudot},\eqref{h-tdh}, \eqref{J10} and \eqref{J11}, we have
\begin{align}\label{J1}
& J_1 \leq - (\lambda+2\mu)\int\sigma^{m}(\div\dot{u})^{2}dx -\left(\int_{\partial\Omega}\sigma^{m}(u\cdot\nabla n\cdot u)Fds\right)_t\nonumber\\
& \quad + \frac{\delta}{3}\sigma^{m}\|\nabla\dot{u}\|\ltwo+C\sigma^m C_0^{2 \delta_0/3}\|\nabla H_t\|\ltwo+C\sigma^m(\|\nabla H\|_{L^2}^4+\|\nabla u\|_{L^2}^4)\|\curl^2  H\|\ltwo \nonumber\\
& \quad +C\sigma^m \|\nabla u\|^4_{L^4}+C\sigma^{m}(\|\rho^{\frac{1}{2}}\dot{u}\|_{L^2}^{2}+\|\curl^2  H\|\ltwo)\|\nabla u\|_{L^2}^{2}+C\sigma^{m}\|\nabla H\|\ltwo \|\nabla u\|_{L^2}^6\nonumber\\
&\quad +C\sigma^m\|\nabla u\|_{L^2}^2(1+\|\nabla u\|_{L^2}^2+\|\nabla u\|_{L^2}^4+\|\nabla H\|_{L^2}^2+\|\nabla H\|_{L^2}^4+\|\nabla H\|_{L^2}^6)\nonumber\\
&\quad+Cm\sigma^{m-1}\sigma'(\|\rho^{\frac{1}{2}}\dot{u}\|_{L^2}^{2}+\|\curl^2  H\|\ltwo+\|\nabla u\|_{L^2}^{2}+\|\nabla H\|_{L^2}^{2}) \nonumber\\
&\quad+Cm\sigma^{m-1}\sigma'(\|\nabla u\|_{L^2}^{2}+\|\nabla H\|_{L^2}^{2})\|\nabla u\|_{L^2}^{2}.
\end{align}

Next, by $ \omega_t=\curl \dot u-u\cdot \na \omega-\na u^i\times \pa_iu$ and a straightforward calculation leads to
\begin{align}\label{J20}
J_2 
&=
-\mu\int\sigma^{m}|\curl \dot{u}|^{2}dx+\mu\int\sigma^{m}\curl\dot{u}\cdot(\nabla u^i\times\nabla_i u)dx \nonumber\\
&\quad+\mu\int\sigma^{m}u \cdot \nabla\omega \cdot \curl\dot{u}dx+\mu\int\sigma^{m} u \cdot\nabla\dot{u} \cdot(\nabla\times \omega)  dx\nonumber\\
&\leq -\mu\int\sigma^{m}|\curl \dot{u}|^{2}dx+\frac{\delta}{3}\sigma^{m}\|\nabla\dot{u}\|_{L^2}^2+C\sigma^{m}\|\nabla u\|_{L^4}^4 
\end{align}

Finally, a directly computation shows that
\begin{align}\label{J30}
\displaystyle J_3 
& =-\int\sigma^{m}\nabla\dot{u}:(H\otimes H)_t dx-\mu\int\sigma^{m}H \cdot \nabla H^j u \cdot \nabla\dot{u}^{j}dx\nonumber \\
& \leq C\sigma^{m}(\|\nabla\dot{u}\|_{L^2} \|H\|_{L^3} \|H_t\|_{L^6}+\|\nabla\dot{u}\|_{L^2} \|H\|_{L^6} \|\nabla H\|_{L^6}\|u\|_{L^6} ) \nonumber \\
& \leq \frac{\delta}{3}\sigma^{m}\|\nabla\dot{u}\|_{L^2}^2+C\sigma^m(\|\nabla H\|_{L^2}^4+\|\nabla u\|_{L^2}^4)\|\curl^2  H\|\ltwo \nonumber\\
& \quad +C\sigma^m C_0^{2 \delta_0/3}\|\nabla H_t\|\ltwo+C\sigma^m \|\nabla H\|_{L^2}^4\|\nabla u\|_{L^2}^2.
\end{align}

Combining \eqref{J1}, \eqref{J20} with \eqref{J30}, we deduce from \eqref{J0} that
\begin{align}\label{J01}
&\left(\frac{\sigma^{m}}{2}\|\rho^{\frac{1}{2}}\dot{u}\|_{L^2}^2\right)_t+(\lambda+2\mu)\sigma^{m}\|\div\dot{u}\|_{L^2}^2+\mu\sigma^{m}\|\curl\dot{u}\|_{L^2}^2 \nonumber\\
&\leq -\left(\int_{\partial\Omega}\sigma^{m}(u\cdot\nabla n\cdot u)Fds\right)_t+
\delta\sigma^{m}\|\nabla\dot{u}\|\ltwo+C\sigma^m C_0^{2 \delta_0/3}\|\nabla H_t\|\ltwo \nonumber\\
& \quad +C\sigma^m \|\nabla u\|^4_{L^4}+C\sigma^m(\|\nabla H\|_{L^2}^4+\|\nabla u\|_{L^2}^4)\|\curl^2  H\|\ltwo \nonumber\\
& \quad +C\sigma^{m}(\|\rho^{\frac{1}{2}}\dot{u}\|_{L^2}^{2}+\|\curl^2  H\|\ltwo)\|\nabla u\|_{L^2}^{2}+C\sigma^{m}\|\nabla H\|\ltwo \|\nabla u\|_{L^2}^6\nonumber\\
&\quad +C\sigma^m\|\nabla u\|_{L^2}^2(1+\|\nabla u\|_{L^2}^2+\|\nabla u\|_{L^2}^4+\|\nabla H\|_{L^2}^2+\|\nabla H\|_{L^2}^4+\|\nabla H\|_{L^2}^6)\nonumber\\
&\quad+Cm\sigma^{m-1}\sigma'(\|\rho^{\frac{1}{2}}\dot{u}\|_{L^2}^{2}+\|\curl^2  H\|\ltwo+\|\nabla u\|_{L^2}^{2}+\|\nabla H\|_{L^2}^{2}) \nonumber\\
&\quad+Cm\sigma^{m-1}\sigma'(\|\nabla u\|_{L^2}^{2}+\|\nabla H\|_{L^2}^{2})\|\nabla u\|_{L^2}^{2}.
\end{align}

By \eqref{tdudot} and Lemma \ref{lem-basic}, choosing $\delta$ small enough, and integrating \eqref{J01} over $(0,T]$, for $m>0$, we get
\begin{align}\label{J03}
&\quad \sigma^{m}\|\rho^{\frac{1}{2}}\dot{u}\|_{L^2}^2+\int_0^T\sigma^{m}\|\nabla\dot{u}\|_{L^2}^2dt\nonumber\\
&\leq -\int_{\partial\Omega}\sigma^{m}(u\cdot\nabla n\cdot u)Fds+CC_0^{2 \delta_0/3}\int_0^T\sigma^m \|\nabla H_t\|\ltwo dt \nonumber\\
& \quad +C\int_0^T\sigma^m \|\nabla u\|^4_{L^4}dt+CC_0^{2 \delta_0}\sup_{0\leq t\leq T}\sigma^m(\|\curl^2  H\|\ltwo+\|\rho^{\frac{1}{2}}\dot{u}\|_{L^2}^{2}) \nonumber\\
& \quad +CC_0^{2 \delta_0}\sup_{0\leq t\leq T}\sigma^{m}(\|\nabla u\|_{L^2}^{2}\|\nabla H\|\ltwo+\|\nabla u\|_{L^2}^{2})+CC_0\sup_{0\leq t\leq \sigma(T)}\sigma^{m-1}\|\nabla u\|_{L^2}^{2} \nonumber\\
&\quad+C \int_0^{\sigma(T)} m\sigma^{m-1}(\|\rho^{\frac{1}{2}}\dot{u}\|_{L^2}^{2}+\|\curl^2  H\|\ltwo)dt +CC_0.
\end{align}
For the boundary term in the right-hand side of \eqref{J03} , from Lemma \ref{lem-f-td}, we have
\begin{align}\label{J0b1}
&\quad\int_{\partial\Omega}(u\cdot\nabla n\cdot u)Fds\leq C\|\nabla u\|_{L^2}^{2}\|F\|_{H^1}\nonumber\\
&\leq\frac{1}{2}\|\rho^{\frac{1}{2}}\dot{u}\|_{L^2}^2+CC_0^{2 \delta_0/3}\|\curl^2  H\|\ltwo+C(\|\nabla u\|_{L^2}^2+\|\nabla H\|_{L^2}^2+\|\nabla u\|_{L^2}^4).
\end{align}
Therefore,
\begin{align}\label{J04}
&\sigma^{m}\|\rho^{\frac{1}{2}}\dot{u}\|_{L^2}^2+\int_0^T\sigma^{m}\|\nabla\dot{u}\|_{L^2}^2dt-C_2C_0^{2 \delta_0/3}\int_0^T\sigma^m \|\nabla H_t\|\ltwo dt \nonumber\\
&\leq C\int_0^T\sigma^m \|\nabla u\|^4_{L^4}dt+CC_0^{2 \delta_0}\sup_{0\leq t\leq T}\sigma^m(\|\curl^2  H\|\ltwo+\|\rho^{\frac{1}{2}}\dot{u}\|_{L^2}^{2}) \nonumber\\
& \quad +CC_0^{2 \delta_0}\sup_{0\leq t\leq T}\sigma^{m}(\|\nabla u\|_{L^2}^{2}\|\nabla H\|\ltwo+\|\nabla u\|_{L^2}^{2})+CC_0\sup_{0\leq t\leq \sigma(T)}\sigma^{m-1}\|\nabla u\|_{L^2}^{2} \nonumber\\
&\quad+C \int_0^{\sigma(T)} m\sigma^{m-1}(\|\rho^{\frac{1}{2}}\dot{u}\|_{L^2}^{2}+\|\curl^2  H\|\ltwo)dt +CC_0 \nonumber\\
&\quad+CC_0^{2 \delta_0/3}\sigma^{m}\|\curl^2  H\|\ltwo+C\sigma^{m}(\|\nabla u\|_{L^2}^2+\|\nabla H\|_{L^2}^2+\|\nabla u\|_{L^2}^4).
\end{align}

Next, we need to estimate the term $\|\nabla H_t\|_{L^2}$. Noticing that
\begin{equation}\label{ht}
\begin{cases}
 H_{tt}-\nu \nabla \times (\curl H_t)=(H \cdot \nabla u-u \cdot \nabla H-H \div u)_t,&\text{in}\quad \Omega,\\
 H_t \cdot n=0,\quad \curl H_t \times n=0,& \text{on}\quad \partial\Omega,\\
 \end{cases}
\end{equation}
and after directly computations we obtain
\begin{align}\label{ht1}
&\quad \left(\frac{\sigma^{m}}{2}\|H_t\|_{L^2}^2\right)_t+\sigma^{m}\|\curl H_t\|_{L^2}^2-\frac{m}{2}\sigma^{m-1}\sigma' \|H_t\|\ltwo\nonumber\\
&= \int \sigma^{m}(H_t \cdot \nabla u-u \cdot \nabla H_t-H_t \div u)\cdot H_t dx \nonumber \\
&\quad+ \int \sigma^{m}(H \cdot \nabla \dot{u}-\dot{u} \cdot \nabla H-H \div \dot{u})\cdot H_t dx \nonumber \\
&\quad - \int \sigma^{m}(H \cdot \nabla (u \cdot \nabla u)-(u \cdot \nabla u)\cdot \nabla H-H \div(u \cdot \nabla u) )\cdot H_t dx \nonumber \\
& \triangleq K_1+K_2+K_3.
\end{align}
By Lemma \ref{lem-gn} and Lemma \ref{lem-f-td}, a direct calculation leads to
\begin{align}\label{htk1}
K_1 
& \leq C\sigma^{m}(\|H_t\|_{L^3}\|H_t\|_{L^6}\|\nabla u\|_{L^2}
+\|u\|_{L^6}\|H_t\|_{L^3}\|\nabla H_t\|_{L^2})\nonumber\\
&\leq \frac{\delta}{4}\sigma^{m}\|\nabla H_t\|_{L^2}^{2}+C\sigma^{m}\|\nabla u\|_{L^2}^4\|H_t\|_{L^2}^{2}.
\end{align}
Similarly,
\begin{align}\label{htk20}
K_2 
& \leq C\sigma^{m}\|H\|_{L^3}\|H_t\|_{L^6}\|\nabla \dot{u}\|_{L^2}
-\int_{\partial \Omega}\sigma^{m}(\dot{u} \cdot n)( H \cdot H_t) ds \nonumber\\
 & \quad +\int\sigma^{m}\div\dot{u}\, H \cdot H_t dx+\int\sigma^{m} \dot{u}\cdot \nabla H_t \cdot H dx \nonumber\\
&\leq \int_{\partial \Omega}\sigma^{m}(u \!\cdot \!\nabla n \cdot u)( H\! \cdot\! H_t) ds+CC^{\delta_0/3}\sigma^{m}(\|\nabla \dot{u}\|_{L^2}^{2}\!+\!\|\nabla u\|_{L^2}^{4}\!+\!\|\nabla H_t\|_{L^2}^{2}),
\end{align}
where in the lase inequality we have used the fact that $\dot{u} \cdot n=-u \cdot \nabla n \cdot u$ on $\partial \Omega$. By Sobolev trace theorem and Lemma \ref{lem-f-td}, it indicates that
\begin{align}\label{htk21}
& \quad\int_{\partial \Omega}\sigma^{m}(u \cdot \nabla n \cdot u)( H \cdot H_t) ds\nonumber\\
&\leq C\sigma^{m}(\|u\|_{L^6}\|\nabla u\|_{L^2}\|H\|_{L^6}\|H_t\|_{L^6}+\|u\|_{L^6}^2\|\nabla H\|_{L^2}\|H_t\|_{L^6}\nonumber\\
& \quad+\|u\|_{L^6}^2\|\nabla H_t\|_{L^2}\|H\|_{L^6}+\|u\|_{L^4}^{2}\|H\|_{L^3}\|H_t\|_{L^6})\nonumber\\
&\leq \frac{\delta}{4}\sigma^{m}\|\nabla H_t\|_{L^2}^{2}+C\sigma^{m}\|\nabla u\|_{L^2}^4\|\nabla H\|_{L^2}^{2}+CC^{\delta_0/3}\sigma^{m}\|\nabla u\|_{L^2}^{4}.
\end{align}
Combining \eqref{htk20} and \eqref{htk21}, we have
\begin{align}\label{htk2}
K_2 &\leq  \frac{\delta}{4}\sigma^{m}\|\nabla H_t\|_{L^2}^{2}+C\sigma^{m}\|\nabla u\|_{L^2}^4\|\nabla H\|_{L^2}^{2}\nonumber \\
 &\quad +CC^{\delta_0/3}\sigma^{m}(\|\nabla \dot{u}\|_{L^2}^{2}+\|\nabla u\|_{L^2}^{4}+\|\nabla H_t\|_{L^2}^{2}).
\end{align}
By Sobolev trace theorem and Lemma \ref{lem-f-td} again, a direct computation yields
\begin{align}\label{htk3}
K_3 
& =\int \sigma^{m} H \cdot\nabla H_t \cdot( u \cdot \nabla u) dx 
+\int_{\partial \Omega}\sigma^{m} H\cdot H_t\,(u \cdot \nabla u \cdot n) ds
\nonumber \\
& \quad -\int \sigma^{m} u \cdot \nabla u \cdot \nabla H_t \cdot H dx \nonumber \\
&\leq \frac{\delta}{2}\sigma^{m}\|\nabla H_t\|_{L^2}^{2}+C\sigma^{m}(\|\nabla u\|_{L^2}^4+\|\nabla H\|_{L^2}^{4})(\|\rho^{\frac{1}{2}}\dot{u}\|_{L^2}^{2}+\|\curl^2  H\|\ltwo)\nonumber \\
 &\quad+C\sigma^{m}\|\nabla u\|_{L^2}^2\|\nabla H\|_{L^2}^{2}(\|\nabla u\|_{L^2}^2+\|\nabla H\|_{L^2}^{2}+1)\nonumber \\
 &\quad +CC^{\delta_0/3}\sigma^{m}(\|\nabla \dot{u}\|_{L^2}^{2}+\|\nabla u\|_{L^2}^{4}+\|\nabla H_t\|_{L^2}^{2}).
\end{align}
Putting \eqref{htk1}, \eqref{htk2} and \eqref{htk3} into \eqref{ht1}, choosing $\delta$ small enough, we have
\begin{align}\label{ht3}
&\quad \left(\sigma^{m}\|H_t\|_{L^2}^2\right)_t+\sigma^{m}\|\nabla H_t\|_{L^2}^2-CC^{\delta_0/3}\sigma^{m}(\|\nabla \dot{u}\|_{L^2}^{2}+\|\nabla H_t\|_{L^2}^{2})\nonumber\\
&\leq C\sigma^{m}(\|\nabla u\|_{L^2}^4+\|\nabla H\|_{L^2}^{4})(\|\rho^{\frac{1}{2}}\dot{u}\|_{L^2}^{2}+\|\curl^2  H\|\ltwo+\|H_t\|_{L^2}^2)\nonumber \\
 &\quad+C\sigma^{m}\|\nabla u\|_{L^2}^2\|\nabla H\|_{L^2}^{2}(\|\nabla u\|_{L^2}^2+\|\nabla H\|_{L^2}^{2}+1)+Cm\sigma^{m-1}\sigma' \|H_t\|\ltwo.
\end{align}
Integrating over $(0,T]$, then by Lemma \ref{lem-basic}, for $m>0$, we get
\begin{align}\label{ht4}
&\quad \sigma^{m}\|H_t\|_{L^2}^2+\int_0^T\sigma^{m}\|\nabla H_t\|_{L^2}^2dt-C_3C^{\delta_0/3}\int_0^T\sigma^{m}(\|\nabla \dot{u}\|_{L^2}^{2}+\|\nabla H_t\|_{L^2}^{2})dt\nonumber\\
&\leq CC_0^{2 \delta_0}\sup_{0\leq t\leq T}\sigma^{m}(\|\rho^{\frac{1}{2}}\dot{u}\|_{L^2}^{2}+\|\curl^2  H\|\ltwo+\|H_t\|_{L^2}^2)+CC_0\sup_{0\leq t\leq T}\sigma^{m}\|\nabla u\|_{L^2}^2\nonumber \\
 &\quad+CC_0^{2 \delta_0}\sup_{0\leq t\leq T}\sigma^{m}(\|\nabla u\|_{L^2}^2+\|\nabla H\|_{L^2}^{2})+C \int_0^{\sigma(T)} m\sigma^{m-1}\|H_t\|\ltwo dt.
\end{align}
Now take $m=2$ in \eqref{J04} and \eqref{ht4}, we deduce after adding them together that
\begin{align}\label{a20}
&\sigma^{2}(\|\rho^{\frac{1}{2}}\dot{u}\|_{L^2}^2+\|H_t\|_{L^2}^2)+\int_0^T\sigma^{2}(\|\nabla\dot{u}\|_{L^2}^2+\|\nabla H_t\|_{L^2}^2)dt \nonumber \\
& \quad -C_2C_0^{2 \delta_0/3}\int_0^T\sigma^2 \|\nabla H_t\|\ltwo dt -C_3C^{\delta_0/3}\int_0^T\sigma^{2}(\|\nabla \dot{u}\|_{L^2}^{2}+\|\nabla H_t\|_{L^2}^{2})dt\nonumber\\
&\leq C\int_0^T\sigma^2 \|\nabla u\|^4_{L^4}dt+CC_0^{2 \delta_0}\sup_{0\leq t\leq T}\sigma^2(\|\rho^{\frac{1}{2}}\dot{u}\|_{L^2}^{2}+\|\curl^2  H\|\ltwo+\|H_t\|_{L^2}^2) \nonumber\\
& \quad +CC_0^{2 \delta_0}\sup_{0\leq t\leq T}\sigma^{2}(\|\nabla u\|_{L^2}^{2}\|\nabla H\|\ltwo+\|\nabla u\|_{L^2}^{2})+CC_0\sup_{0\leq t\leq T}\sigma\|\nabla u\|_{L^2}^{2} \nonumber\\
&\quad+C \int_0^{\sigma(T)} \sigma(\|\rho^{\frac{1}{2}}\dot{u}\|_{L^2}^{2}+\|\curl^2  H\|\ltwo+\|H_t\|\ltwo)dt +CC_0 \nonumber\\
&\quad+CC_0^{2 \delta_0/3}\sigma^{2}\|\curl^2  H\|\ltwo+C\sigma^{2}(\|\nabla u\|_{L^2}^2+\|\nabla H\|_{L^2}^2+\|\nabla u\|_{L^2}^4)\nonumber\\
&\leq C\int_0^T\sigma^2 \|\nabla u\|^4_{L^4}dt+CC_0^{2 \delta_0+1/2}+CA_1(T)+CC_0+CC_0^{2 \delta_0/3+1/2}.
\end{align}
Thus we have
\begin{align}\label{a21}
&\quad \sup_{0\leq t\leq T}\sigma^{2}(\|\rho^{\frac{1}{2}}\dot{u}\|_{L^2}^2+\|H_t\|_{L^2}^2)+\int_0^T\sigma^{2}(\|\nabla\dot{u}\|_{L^2}^2+\|\nabla H_t\|_{L^2}^2)dt \nonumber \\
&\leq C\int_0^T\sigma^2 \|\nabla u\|^4_{L^4}dt+CA_1(T)+CC_0^{2 \delta_0/3+1/2}.
\end{align}
provided that $C_0$ is chosen to satisfy
\begin{align*}
	C_0\leq \ve_2 \triangleq \min \{\ve_1, (4C_2)^{-3/{2 \delta_0}},(4C_3)^{-3/{\delta_0}}\}.
\end{align*}
Finally, by Lemma \ref{lem-gn} and \eqref{CMHD}$_3$, it holds
\begin{align}\label{h2xd1}
\|\curl^2  H\|_{L^2}
&\leq C(\|H_t\|+\|\curl^2  H\|_{L^2}^{1/2}\|\nabla H\|_{L^2}^{1/2}\|\nabla u\|_{L^2}+\|\nabla H\|_{L^2}\|\nabla u\|_{L^2})\nonumber \\
 &\leq \frac{1}{2}\|\curl^2  H\|_{L^2}+C(\|H_t\|+\|\nabla H\|_{L^2}\|\nabla u\|_{L^2}^2+\|\nabla H\|_{L^2}\|\nabla u\|_{L^2}).
\end{align}
Thus, by \eqref{key1} and \eqref{h2xd1}, we have
\begin{align}\label{h2xd2}
& \quad \sup_{0\leq t\leq T}\sigma^{2} \|\curl^2  H\|_{L^2}^2\nonumber \\
&\leq C\sup_{0\leq t\leq T} \sigma^{2}(\|H_t\|^2+\|\nabla H\|_{L^2}^2\|\nabla u\|_{L^2}^4+\|\nabla H\|_{L^2}^2\|\nabla u\|_{L^2}^2)\nonumber \\
& \leq  C\int_0^T\sigma^2 \|\nabla u\|^4_{L^4}dt+CA_1(T)+CC_0^{2 \delta_0/3+1/2}.
\end{align}
Combining \eqref{a21} and \eqref{h2xd2}, we give \eqref{a02} and complete the proof of Lemma \ref{lem-a0}.
\end{proof}

\begin{lemma}\label{lem-a1} Assume that $(\rho,u,H)$ is a smooth solution of
 \eqref{CMHD}-\eqref{boundary} satisfying \eqref{key1} and the initial data condition \eqref{dt2}, then there exist  positive constants $C$ and $\varepsilon_3$ depending only on $\mu,\,\,\lambda,\,\,\nu,\,\,\gamma,\,\,a,\,\,\bar{\rho},\,\,\hat{\rho},\,\,s,\,\,\Omega$, $M_1$  and $M_2$ such that
\begin{align}
& \sup_{0\le t\le \si(T)}t^{1-s}\|\nabla u\|_{L^2}^2+\int_0^{\si(T)}t^{1-s}\int\rho|\dot{u}|^2 dxdt\le C(\hat{\rho},M_1,M_2),\label{uv1}\\
&\sup_{0\le t\le \si(T)}t^{2-s}\int
 \left(\rho|\dot{u}|^2+| \mathop{\rm curl}^2\nolimits H|^2+|H_t|^2\right) dx \nonumber \\
&\qquad + \int_0^{\si(T)}\int t^{2-s}\left(|\nabla\dot{u}|^2+|\nabla H_t|^2\right)dxdt\leq C(\hat{\rho},M_1,M_2),\label{uv2}
\end{align}
provide that $C_0<\varepsilon_3$.
\end{lemma}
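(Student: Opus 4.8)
The two bounds are the fractional-in-time analogues of the integer-weighted estimates $A_1(T)$ and $A_2(T)$ from Lemma \ref{lem-a0}, and the plan is to rerun those energy arguments with the weight $\sigma^m$ specialized to $m=1-s$ for \eqref{uv1} and $m=2-s$ for \eqref{uv2}. The one new ingredient needed to accommodate the magnetic field is the interpolated bound \eqref{tdh-s} with $\theta=s$, which yields $\int_0^{\sigma(T)}\sigma^{1-s}(\|\curl^2 H\|_{L^2}^2+\|H_t\|_{L^2}^2)\,dt\le C\|H_0\|_{H^s}^2\le CM_2^2$ together with $\sup_t\sigma^{1-s}\|\nabla H\|_{L^2}^2\le CM_2^2$. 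Note that $\sigma^{1-s}$ is precisely the weight produced by $\sigma^{m-1}\sigma'$ when $m=2-s$, and it is also the base weight in \eqref{uv1}, so \eqref{tdh-s} is tailor-made to absorb the magnetic part of every term that arises. Since the target constant is $C(\hat\rho,M_1,M_2)$ rather than a power of $C_0$, the role of the hypotheses $\|u_0\|_{H^s}\le M_1$ and $\|H_0\|_{H^s}\le M_2$ is exactly to replace the quantities $\|\nabla u_0\|_{L^2}^2$ and $\|\nabla H_0\|_{L^2}^2$ that would appear in an unweighted ($m=0$) estimate and that we are not allowed to assume small.

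First I would establish \eqref{uv2} assuming \eqref{uv1}. Repeating the derivations of \eqref{J04} and \eqref{ht4} verbatim with $m=2-s$ and adding the two inequalities as in \eqref{a20}, every weight-derivative term carries the factor $m\sigma^{m-1}\sigma'=(2-s)\sigma^{1-s}$: its magnetic pieces $\|\curl^2 H\|_{L^2}^2$, $\|H_t\|_{L^2}^2$, $\|\nabla H\|_{L^2}^2$ are controlled by \eqref{tdh-s}; its term $\int_0^{\sigma(T)}\sigma^{1-s}\|\rho^{\frac12}\dot u\|_{L^2}^2\,dt$ is controlled directly by \eqref{uv1}; and its term $\int_0^{\sigma(T)}\sigma^{1-s}\|\nabla u\|_{L^2}^2\,dt\le\big(\sup_t\sigma^{1-s}\|\nabla u\|_{L^2}^2\big)\,\sigma(T)$ is controlled by the supremum in \eqref{uv1} times the finite length of $(0,\sigma(T)]$. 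The genuinely higher-order pieces $\int\sigma^2\|\nabla u\|_{L^4}^4$ and the quadratic-in-gradient couplings are handled exactly as in the proof of \eqref{a02}, using \eqref{key1}, \eqref{basic3}, Lemma \ref{lem-f-td} and the thresholds $\varepsilon_2,\varepsilon_3$ to absorb the $\delta$- and $C_0^{2\delta_0/3}$-factors of $\|\nabla\dot u\|_{L^2}^2$ and $\|\nabla H_t\|_{L^2}^2$ into the dissipation; finally $\|\curl^2 H\|_{L^2}^2$ is recovered through \eqref{h2xd1}. This closes \eqref{uv2}.

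The heart of the matter is \eqref{uv1}. Testing $\eqref{CMHD}_2$ against $\sigma^{1-s}\dot u$ as in \eqref{I0} and integrating produces, besides terms already dealt with above, the weight-derivative contribution $\int_0^{\sigma(T)}(1-s)\sigma^{-s}\|\nabla u\|_{L^2}^2\,dt$ coming from $\tfrac{d}{dt}(\sigma^m\|\omega\|_{L^2}^2)$ and $\tfrac{d}{dt}(\sigma^m\|\div u\|_{L^2}^2)$; this is exactly the term that was silently absorbed in passing to \eqref{I02}, an absorption legitimate only for $m\ge 1$. Because here $m-1=-s\in[-1,-\tfrac12)$ the integral is genuinely singular at $t=0$, and this is the main obstacle: it cannot be absorbed by the dissipation, nor bounded by Hölder against the available bounds $\int_0^T\|\nabla u\|_{L^2}^2\,dt\le CC_0$ or $\int_0^T\|\nabla u\|_{L^2}^4\,dt\le CC_0^{2\delta_0}$ of \eqref{basic3}, since $s>\tfrac12$ makes $\int_0^{\sigma(T)}\sigma^{-2s}\,dt$ diverge; nor does the weaker bound $A_4$ from \eqref{key1} help, its weight $\sigma^{(3-2s)/4}$ decaying faster than $\sigma^{1-s}$ as $t\to0^{+}$ and hence controlling a strictly weaker quantity near the initial time. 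The plan is to control this integral through the parabolic smoothing encoded in $u_0\in H^s$: for the Lam\'e operator $\mathcal{L}$ with the slip boundary condition \eqref{navier-b} one has the Stein–Weiss-type estimate $\int_0^\infty t^{-s}\|\nabla e^{-t\mathcal{L}}u_0\|_{L^2}^2\,dt\le C\|u_0\|_{H^s}^2$, the exact analogue of the interpolation used for $H$ in Lemma \ref{lem-tdh}, which one transfers to the full system by viewing the momentum equation as this linear evolution driven by the convective, pressure and magnetic forces and treating that forcing perturbatively via \eqref{key1}, \eqref{tdh-s} and the smallness $C_0\le\varepsilon$. The delicate point, requiring the most care, is to verify that the nonlinear and coupling remainders are truly of higher order in $C_0$ so that they do not reintroduce a non-integrable singularity — the velocity admitting no honest linear solution map of its own, unlike the magnetic field. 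Once $\int_0^{\sigma(T)}\sigma^{-s}\|\nabla u\|_{L^2}^2\,dt\le C(\hat\rho,M_1,M_2)$ is in hand, the remaining right-hand terms — the magnetic source via \eqref{tdh-s}, the cubic term $\int\sigma^{1-s}\|\nabla u\|_{L^3}^3$ via \eqref{tdu2} and Young's inequality, and the quartic products via \eqref{basic3} and \eqref{key1} — are routine, and \eqref{uv1} follows.
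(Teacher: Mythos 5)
Your treatment of \eqref{uv2} is sound and coincides with the paper's: take $m=2-s$ in \eqref{J01} and \eqref{ht4}, integrate over $(0,\sigma(T)]$, and use \eqref{uv1} together with \eqref{h2xd1} and the $t^{2-s}$-weighted $\|\nabla u\|_{L^4}^4$ estimate to close. You have also correctly isolated the real difficulty in \eqref{uv1}, namely the non-integrable weight-derivative term $\int_0^{\sigma(T)}\sigma^{-s}\|\nabla u\|_{L^2}^2\,dt$ with $s>1/2$. But your proposed resolution of that difficulty is where the gap lies. You want to run Duhamel around the constant-coefficient Lam\'e semigroup $e^{-t\mathcal{L}}$ and treat the convective, pressure, magnetic \emph{and implicitly the variable-density} contributions as perturbative forcing whose effect in the norm $\int_0^{\cdot}t^{-s}\|\nabla\cdot\|_{L^2}^2\,dt$ is ``of higher order in $C_0$.'' That claim is not true for the convective term: near $t=0$ the only available controls are $\|\rho^{1/3}u\|_{L^3}\le CC_0^{\delta_0/3}$ and an $M_1$-dependent (not small) bound on $\|\nabla u\|_{L^2}$, so $\rho u\cdot\nabla u$ is not small in any norm that would let the Duhamel integral survive the singular weight $t^{-s}$. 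Worse, replacing $\rho\partial_t$ by $\partial_t$ to fit the semigroup framework produces a remainder $(\rho-1)u_t$ that is in no sense a small perturbation when vacuum is allowed. You flag this verification as ``the delicate point,'' but it is precisely the step that fails as stated, so the proof of \eqref{uv1} is not complete.

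The paper's actual mechanism is different and is the missing idea: it decomposes $u=w_1+w_2+w_3$, where each $w_i$ solves \eqref{fcz1}--\eqref{fcz3} for the operator $Lf=\rho\dot f-\mu\Delta f-(\lambda+\mu)\nabla\mathop{\mathrm{div}}\nolimits f$, i.e.\ the convection and the (possibly vanishing) density are built \emph{into} the linear operator rather than treated as forcing; only $-\nabla(P-\bar P)$ and the Lorentz force are split off, each with zero initial data so that no singular weight is needed for $w_2,w_3$ beyond \eqref{bw23} and \eqref{bw34}--\eqref{bw35}. For fixed $(\rho,u)$ the maps $w_{10}\mapsto w_1$ and $B_0\mapsto B\mapsto w_3$ are genuinely linear, so the Stein--Weiss interpolation is applied to the two honest endpoint estimates \eqref{bw12} (controlled by $\|\nabla w_{10}\|_{L^2}$) and \eqref{bw13} (controlled by $\|w_{10}\|_{L^2}$, with weight $t$), yielding \eqref{bw14} and \eqref{bw36} with $\theta=s$; the only ``perturbation'' ever absorbed is the commutator $\int\rho\dot w_i\cdot(u\cdot\nabla w_i)\,dx$, which is handled by the smallness of $A_5(\sigma(T))$ from \eqref{key1}. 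In short: your interpolation instinct is the right one, but it must be applied to the solution maps of the $L$-system, not to the constant-coefficient Lam\'e semigroup, and without that decomposition the singular integral you correctly identified remains uncontrolled.
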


\begin{proof} Suppose $w_1(x,t)$, $w_2(x,t)$ and $w_3(x,t)$ solve problems
\begin{equation}\label{fcz1}
\displaystyle \begin{cases}
  Lw_1=0 \,\, &in \,\,\Omega,\\  w_1(x,0)=w_{10}(x)\,\, &in \,\,\Omega,\\w_1\cdot n=0\,\,\text{and}\,\, \curl w_1\times n=0 \,\,&on \,\,\partial\Omega ,
\end{cases}
\end{equation}
\begin{equation}\label{fcz2}
\displaystyle  \begin{cases}
  Lw_2=-\nabla(P-\bar{P})\,\, &in \,\,\Omega,\\  w_2(x,0)=0\,\, &in \,\,\Omega,\\w_2\cdot n=0\,\,\text{and}\,\, \curl w_2\times n=0 \,\,&on \,\,\partial\Omega ,
\end{cases}
\end{equation}
and
\begin{equation}\label{fcz3}
\displaystyle  \begin{cases}
  Lw_3=(\nabla \times B)\times H\,\, &in \,\,\Omega,\\  w_3(x,0)=0\,\, &in \,\,\Omega,\\w_3\cdot n=0\,\,\text{and}\,\, \curl w_3\times n=0 \,\,&on \,\,\partial\Omega ,
  \end{cases}
\end{equation}
where $Lf\triangleq\rho\dot{f}-\mu\Delta f-(\lambda+\mu)\nabla\div f $, $B$ is the solution of \eqref{CMHD}$_3$ with fixed smooth $u$ and initial data $B_0(x)$. Note that $B$ satisfies \eqref{tdh} and \eqref{tdh-s} of Lemma \ref{lem-tdh}.

Just as we have done in the proof of Lemma \ref{lem-f-td}, by Lemma \ref{lem-lame} and Sobolev's inequality, for any $p\in[2,6],$ we have
\be \ba
\displaystyle  \|\nabla^{2} w_1\|_{L^{2}}\leq C(\|\rho\dot{w}_1\|_{L^{2}}+\|w_1\|_{L^{2}})\leq C(\|\rho\dot{w}_1\|_{L^{2}}+\|\nabla w_1\|_{L^{2}}),
\ea\ee
\be \label{xbh1} \ba
\|\nabla w_1\|_{L^{p}}\leq C\|w_1\|_{W^{2,2}}\leq C(\|\rho\dot{w}_1\|_{L^{2}}+\|\nabla w_1\|_{L^{2}}),
\ea\ee
\be \label{xbh2} \ba
\|\nabla F_{w_2}\|_{L^{p}}\leq C\|\rho\dot{w}_2\|_{L^{p}},
\ea\ee
\be \label{xbh3} \ba
\|F_{w_2}\|_{L^{p}}&\leq C(\|\nabla F_{w_2}\|_{L^{2}}+\|F_{w_2}\|_{L^{2}})\le C (\|\rho\dot{w}_2\|_{L^{2}}+\|\nabla w_2\|_{L^{2}}),
\ea\ee
\be \label{xbh4} \ba
\|\nabla w_2\|_{L^{p}}&\leq C\|\rho^{\frac{1}{2}}\dot{w}_2\|_{L^{2}}^{\frac{3p-6}{2p}}(\|\nabla w_2\|_{L^{2}}+\|P-\bar{P}\|_{L^{2}})^{\frac{6-p}{2p}}\\
& \quad +C(\|\nabla w_2\|_{L^{2}}+\|P-\bar{P}\|_{L^{p}}),
\ea\ee
where $F_{w_2}=(\lambda+2\mu)\div w_2-(P-\bar{P})  .$

A similar way as for the proof of \eqref{basic1} shows that
\begin{align}\label{bw1}
\displaystyle  \sup_{0\le t\le \si(T)}\int\rho|w_1|^2dx+\int_0^{\si(T)}\int|\nabla w_1|^2dxdt\le
C\int|w_{10}|^{2}dx  ,
\end{align}
\begin{align}\label{bw2}
\displaystyle  \sup_{0\le t\le \si(T)}\int\rho|w_2|^2dx+\int_0^{\si(T)}\int|\nabla w_2|^2dxdt\le CC_0 ,
\end{align}
and
\begin{equation}\label{bw3}
\displaystyle  \sup_{0\le t\le \si(T)}\int\!\rho|w_3|^2dx\!+\!\int_0^{\si(T)}\!\int|\nabla w_3|^2dxdt\le C\int_0^{\si(T)}\!\int|\nabla B|^2dxdt \le C \|B_0\|\ltwo.
\end{equation}

Multiplying \eqref{fcz1} by $w_{1t}$ and integrating over $\Omega,$ by \eqref{xbh1}, \eqref{key1},  Sobolev's and Young's inequalities, we obtain
\begin{align}\label{bw11}
\displaystyle  &\quad \left(\frac{\lambda+2\mu}{2}\int(\div w_1)^{2}dx + \frac{\mu}{2}\int|\curl w_1|^{2}dx\right)_t + \int\rho|\dot{w}_1|^{2}dx \nonumber \\
& =\int\rho\dot{w}_1\cdot(u\cdot\nabla w_1)dx 
\le C_4C_0^{\frac{\delta_0}{3}}(\|\rho^{\frac{1}{2}}\dot{w}_1\|_{L^{2}}^{2}+ \|\nabla w_1\|_{L^{2}}^{2}).
\end{align}
Together with \eqref{bw1}, and by Gronwall's inequality and Lemma \ref{lem-vn}, it yields
\begin{align}\label{bw12}
\displaystyle  \sup_{0\le t\le \si(T)}\|\nabla w_1\|_{L^{2}}^{2}+\int_0^{\sigma(T)}\int\rho|\dot{w}_1|^{2}dxdt\leq C\|\nabla w_{10}\|_{L^{2}}^{2},
\end{align}
and
\begin{align}\label{bw13}
\displaystyle  \sup_{0\le t\le \si(T)}t\|\nabla w_1\|_{L^{2}}^{2}+\int_0^{\sigma(T)}t\int\rho|\dot{w}_1|^{2}dxdt\leq C\|w_{10}\|_{L^{2}}^{2},
\end{align}
provided $C_0<\hat{\varepsilon}_1\triangleq(2C_4)^{-\frac{3}{\delta_0}}$.

Since the solution operator $w_{10}\mapsto w_1(\cdot,t)$ is linear, by the   standard Stein-Weiss interpolation argument \cite{bl}, one can deduce from \eqref{bw12} and \eqref{bw13} that for any $\theta\in [s,1],$
\begin{align}\label{bw14}
\displaystyle  \sup_{0\le t\le\si(T)}t^{1-\theta}\|\nabla
w_1\|_{L^2}^2+\int_0^{\si(T)}t^{1-\theta}\int\n|\dot
{w_1}|^2dxdt\leq C\| w_{10}\|_{H^\theta}^2,
\end{align}
with a uniform constant $C$ independent of $\theta.$

Multiplying \eqref{fcz2} by $w_{2t}$ and integrating over $\Omega$ give that
\begin{align}\label{bw21}
&\quad \left(\frac{\lambda+2\mu}{2}\int(\div w_2)^{2}dx+\frac{\mu}{2}\int|\curl w_2|^{2}dx-\int(P-\bar{P})\div w_2dx\right)_t+\int\rho|\dot{w}_2|^{2}dx \nonumber\\
& =\int\rho\dot{w}_2\cdot(u\cdot\nabla w_2)dx - \frac{1}{\lambda+2\mu}\int(P-\bar{P})(F_{w_2}\div u+\nabla F_{w_2}\cdot u ) dx  \nonumber \\
& \quad - \frac{1}{2(\lambda+2\mu)}\int(P-\bar{P})^{2}\div u dx +\gamma\int P\div u\,\div w_2dx \nonumber\\
& \leq C(\|\rho^{\frac{1}{2}}\dot{w}_2\|_{L^2}\|\rho^{\frac{1}{3}}u\|_{L^3}\|\nabla w_2\|_{L^6}+\|\nabla u\|_{L^2}\|F_{w_2}\|_{L^2}+\|\nabla F_{w_2}\|_{L^2}\|u\|_{L^2})\nonumber\\
&\quad+C(\|P-\bar{P}\|_{L^2}\|\nabla u\|_{L^2}+\|\nabla u\|_{L^2}\|\nabla w_2\|_{L^2})\nonumber\\
&\leq C_5C_0^{\frac{\delta_0}{3}}\|\rho^{\frac{1}{2}}\dot{w}_2\|_{L^2}^{2}+\frac{1}{4}\|\rho^{\frac{1}{2}}\dot{w}_2\|_{L^2}^{2}
+C(\|\nabla w_2\|_{L^2}^2+\|\nabla u\|_{L^2}^2+\|P-\bar{P}\|_{L^2}^{2/3}),
\end{align}
where we have utilized \eqref{key1}, \eqref{xbh2}-\eqref{xbh4}, H\"{o}lder's, Poincar\'{e}'s and Young's inequalities.
As a result,
\begin{align}\label{bw22}
\displaystyle  & \quad\left((\lambda+2\mu)\|\div w_2\|_{L^{2}}^{2}+\mu\|\curl w_2\|_{L^{2}}^{2}-2\int(P-\bar{P})\div w_2dx\right)_t
+\int\rho|\dot{w}_2|^{2}dx \nonumber \\
 & \le C\left(\|\nabla w_2\|_{L^{2}}^{2}+\|\nabla u\|_{L^{2}}^{2}+\|P-\bar{P}\|_{L^{2}}^\frac{2}{3}\right) ,
\end{align}
provide that $C_0<\hat{\varepsilon}_2\triangleq(4C_5)^{-\frac{3}{\delta_0}}$.

By Gronwall's inequality, \eqref{bw2} and Lemmas \ref{lem-vn}, \ref{lem-basic}, one has
\begin{align}\label{bw23}
\displaystyle  \sup_{0\le t\le \si(T)}\|\nabla w_2\|_{L^{2}}^{2}+\int_0^{\sigma(T)}\int\rho|\dot{w}_2|^{2}dxdt\leq CC_0^{1/3}.
\end{align}

Similarly, multiplying \eqref{fcz3} by $w_{3t}$ and integrating over $\Omega$ give that
\begin{align}\label{bw31}
&\quad \left(\frac{\lambda+2\mu}{2}\int(\div w_3)^{2}dx+\frac{\mu}{2}\int|\curl w_3|^{2}dx\right)_t+\int\rho|\dot{w}_3|^{2}dx \nonumber\\
& =\left(\int(H \cdot\nabla B- \nabla B \cdot H)\cdot w_3dx\right)_t-\int (H \cdot\nabla B- \nabla B \cdot H)_t \cdot w_3dx \nonumber\\
 &\quad +\int\rho\dot{w}_3\cdot(u\cdot\nabla w_3)dx \nonumber \\
 & \triangleq \frac{d}{dt}L_0+L_1+L_2.
\end{align}
By \eqref{g1} and \eqref{h-l3}, we have
\begin{align}\label{bw3l0}
L_0\leq C\|H\|_{L^3}\|\nabla B\|_{L^2}\|w_3\|_{L^6}\leq \frac{\mu}{4}\|\nabla w_3\|_{L^2}^2+C C_0^{2 \delta_0/3}\|\nabla B\|_{L^2}^2.
\end{align}
Using \eqref{tdh}, a directly computation yields
\begin{align}\label{bw3l1}
L_1 
 &\leq C(\|H_t\|_{L^2}\|\nabla B\|_{L^3}\|\nabla w_3\|_{L^2}+\|B_t\|_{L^2}\|H\|_{L^3}\|\nabla w_3\|_{L^6}+\|B_t\|_{L^2}\|\nabla w_3\|_{L^2}\|\nabla H\|_{L^3}) \nonumber \\
 &\leq CC^{\delta_0/3}(\|\rho^{1/2}\dot{u}\|_{L^2}^2+\|\nabla w_3\|_{L^2}^2)+C(\|B_t\|_{L^2}^2+\|\curl^2  B\|_{L^2}^2+\|\nabla B\|_{L^2}^2)\nonumber \\
 &\quad +C(\|\curl^2  H\|_{L^2}\|\nabla H\|_{L^2}+\|\nabla H\|_{L^2}^2)\|\nabla w_3\|_{L^2}^2+\|H_t\|_{L^2}\|\nabla B\|_{L^2}\|\nabla w_3\|_{L^2}\nonumber \\
 &\quad +\|\curl^2  B\|_{L^2}^{1/2}\|\nabla B\|_{L^2}^{1/2}\|H_t\|_{L^2}\|\nabla w_3\|_{L^2},
\end{align}
where we have used the fact \eqref{xbh4}.
Similarly, using Lemma \ref{lem-gn} yields
\begin{align}\label{bw3l2}
L_2 
 &\leq CC^{\delta_0/3}(\|\rho^{1/2}\dot{w_3}\|_{L^2}^2+\|\curl^2  B\|_{L^2}^{2}+\|\nabla B\|_{L^2}^{2}+\|\nabla w_3\|_{L^2}^2).
\end{align}
Putting \eqref{bw3l0}, \eqref{bw3l1} and \eqref{bw3l2} into \eqref{bw31}, we obtain
\begin{align}\label{bw32}
&\quad (\|\nabla w_3\|\ltwo )_t+\int\rho|\dot{w}_3|^{2}dx-(C C_0^{2 \delta_0/3}\|\nabla B\|_{L^2}^2)_t \nonumber\\
&\leq C_6C^{\delta_0/3}(\|\rho^{1/2}\dot{w_3}\|_{L^2}^2+\|\nabla w_3\|_{L^2}^2)+C(\|B_t\|_{L^2}^2+\|\curl^2  B\|_{L^2}^2+\|\nabla B\|_{L^2}^2)\nonumber \\
 &\quad +C(\|\curl^2  H\|_{L^2}\|\nabla H\|_{L^2}+\|\nabla H\|_{L^2}^2)\|\nabla w_3\|_{L^2}^2+C\|H_t\|_{L^2}\|\nabla B\|_{L^2}\|\nabla w_3\|_{L^2}\nonumber \\
 &\quad +C\|\curl^2  B\|_{L^2}^{1/2}\|\nabla B\|_{L^2}^{1/2}\|H_t\|_{L^2}\|\nabla w_3\|_{L^2}.
\end{align}
Thus, if $C_0$ is chosen to be such that $C_0\leq \hat{\ve}_3 \triangleq (2C_6)^{-3/\delta_0}$, we obtain
\begin{align}\label{bw33}
&\quad \sup_{0\le t\le  \si(T)}\|\nabla w_3\|\ltwo+\int_0^{\sigma(T)}\int\rho|\dot{w}_3|^{2}dxdt \nonumber\\
&\leq C\|\nabla B_0\|_{L^2}^2+CC_0^{\delta_0}\sup_{0\le t\le  \si(T)} \|\nabla w_3\|_{L^2}^2 +CC_0^{\delta_0/2}\sup_{0\le t\le  \si(T)}(\|\nabla B\|_{L^2}^2+\|\nabla w_3\|_{L^2}^2)\nonumber \\
 &\quad +CC_0^{\delta_0/2}\left(\int_0^{\sigma(T)}\|\curl^2  B\|_{L^2}^2dt\right)^{1/4}\sup_{0\le t\le  \si(T)}\|\nabla B\|_{L^2}^{1/2}\|\nabla w_3\|_{L^2}\nonumber \\
 &\leq C\|\nabla B_0\|_{L^2}^2+C_7C_0^{\delta_0/2}\sup_{0\le t\le  \si(T)} \|\nabla w_3\|_{L^2}^2,
\end{align}
where we have used \eqref{tdh} and \eqref{basic3} (with $H, H_0$ being replaced by $B, B_0$, respectively). Thus, it follows from \eqref{bw33} that
\begin{align}\label{bw34}
\quad \sup_{0\le t\le  \si(T)}\|\nabla w_3\|\ltwo+\int_0^{\sigma(T)}\int\rho|\dot{w}_3|^{2}dxdt \leq C\|\nabla B_0\|_{L^2}^2,
\end{align}
provided $C_0\leq \hat{\ve}_4 \triangleq \min\{\hat{\ve}_3, (2C_7)^{-2/\delta_0}\}$.
Similarly, multiplying \eqref{bw32} by $t$, integrating it over $(0, \delta(T))$, for $C_0\leq \hat{\ve}_4$, we obtain that
\begin{align}\label{bw35}
\quad \sup_{0\le t\le  \si(T)}(t\|\nabla w_3\|\ltwo)+\int_0^{\sigma(T)}t\int\rho|\dot{w}_3|^{2}dxdt \leq C\|B_0\|_{L^2}^2,
\end{align}
which we have used \eqref{key1}, \eqref{basic3} and \eqref{tdh}.
Since the solution operators $B_{0}\mapsto B$ and $B\mapsto w_3$ are linear, by the standard Stein-Weiss interpolation argument \cite{bl}, one can deduce from \eqref{bw33} and \eqref{bw34} that for any $\theta\in [s,1],$
\begin{align}\label{bw36}
\displaystyle  \sup_{0\le t\le\si(T)}t^{1-\theta}\|\nabla
w_3\|_{L^2}^2+\int_0^{\si(T)}t^{1-\theta}\int\rho|\dot
{w_3}|^2dxdt\leq C\| B_{0}\|_{H^\theta}^2,
\end{align}

Now let $w_{10}=u_0$ and $B_0=H_0$, so that $w_1+w_2+w_3=u$ and $B=H$,
we derive \eqref{uv1} from \eqref{bw14}, \eqref{bw23} and \eqref{bw36} directly under certain condition $C_0<\bar{\varepsilon}_3\triangleq\min\{\hat{\varepsilon}_1, \hat{\varepsilon}_2, \hat{\varepsilon}_4\}$.

In order to prove \eqref{uv2},  taking $m=2-s$ in \eqref{J01}, \eqref{ht4},  and integrating over $(0,\sigma(T)]$ instead of $(0,T]$, in a similar way as we have gotten \eqref{a20}, we obtain
\begin{align}\label{bu1}
&\sup_{0\le t\le \sigma(T)}\sigma^{2-s}(\|\rho^{\frac{1}{2}}\dot{u}\|_{L^2}^2+\|H_t\|_{L^2}^2)+\int_0^{\sigma(T)}\sigma^{2-s}(\|\nabla\dot{u}\|_{L^2}^2+\|\nabla H_t\|_{L^2}^2)dt \nonumber \\
&\leq C\int_0^{\sigma(T)}\sigma^{2-s} \|\nabla u\|^4_{L^4}dt+CC_0^{2 \delta_0/3}\sigma^{2-s}\|\curl^2  H\|\ltwo+C(\hat{\rho}, M_1, M_2).
\end{align}
where we have taken advantage of \eqref{uv1}. Next, from \eqref{h2xd1}, we have
\begin{align}\label{h2xd3}
& \quad \sup_{0\leq t\leq \sigma(T)}t^{2-s} \|\curl^2  H\|_{L^2}^2\nonumber \\
& \leq  C\sup_{0\leq t\leq \sigma(T)} t^{2-s}\|H_t\|_{L^2}^2 +C\sup_{0\leq t\leq \sigma(T)}(\sigma^{1-s}\|\nabla H\|_{L^2}^2)(\sigma^{(3-2s)/4}\|\nabla u\|_{L^2}^2)^2+CC_0^{1/2}\nonumber \\
& \leq  C\sup_{0\leq t\leq \sigma(T)} t^{2-s}\|H_t\|_{L^2}^2 +C(\hat{\rho}, M_1, M_2).
\end{align}
Combining \eqref{bu1} and \eqref{h2xd3}, we obtain
\begin{align}\label{bu2}
&\sup_{0\le t\le \sigma(T)}\sigma^{2-s}(\|\rho^{\frac{1}{2}}\dot{u}\|_{L^2}^2+\|H_t\|_{L^2}^2)+\int_0^{\sigma(T)}\sigma^{2-s}(\|\nabla\dot{u}\|_{L^2}^2+\|\nabla H_t\|_{L^2}^2)dt \nonumber \\
&\leq C\int_0^{\sigma(T)}\sigma^{2-s} \|\nabla u\|^4_{L^4}dt+C(\hat{\rho}, M_1, M_2),
\end{align}
provided $C_0\leq \ve_2$.
By \eqref{tdu2} and \eqref{uv1}, we have
\begin{align}\label{bu3}
&\quad\int_0^{\sigma(T)}t^{2-s}\|\nabla u\|_{L^{4}}^{4}dt \nonumber \\
& \le C\int_0^{\sigma(T)}t^{2-s}(\|\rho^{\frac{1}{2}}\dot{u}\|_{L^{2}}^{3}+\|\curl^2  H\|_{L^2}^3)(\|\nabla u\|_{L^{2}}+\|P-\bar{P}\|_{L^{2}}+\|\nabla H\|_{L^2})dt \nonumber \\
& \quad +C\int_0^{\sigma(T)}t^{2-s}(\|\nabla u\|_{L^{2}}^{4}+\|P-\bar{P}\|_{L^{2}}^{4}+\|P-\bar{P}\|_{L^{4}}^{4}+\|\nabla H\|_{L^2}^4+\||H|^2\|_{L^{4}}^{4})dt \nonumber \\
& \le CC_0^{\delta_0}\sup_{0\le t\le \sigma(T)}(t^{2-s}(\|\rho^{\frac{1}{2}}\dot{u}\|_{L^{2}}^{2}+\|\curl^2  H\|_{L^{2}}^{2}))+C,
\end{align}
since it follows from \eqref{key1} that for $s\in (1/2,1]$,
\begin{align}\label{bu3-1}
&\quad\int_0^{\sigma(T)}t^{2-s}\|\rho^{\frac{1}{2}}\dot{u}\|_{L^{2}}^{3}(\|\nabla u\|_{L^2}+\|\nabla H\|_{L^2})dt \nonumber \\
& \le C\int_0^{\sigma(T)}t^{\frac{2s-3}{4}}(t^{\frac{3-2s}{4}}\big(\|\nabla u\|_{L^2}^2+\|\nabla H\|_{L^2}^2)\big)^{\frac{1}{2}}(t^{\frac{3-2s}{4}}\|\rho^{\frac{1}{2}}\dot{u}\|_{L^{2}}^{2})^{\frac{1}{2}}(t^{2-s}\|\rho^{\frac{1}{2}}\dot{u}\|_{L^2}^2)dt \nonumber \\
& \le CC_0^{\delta_0}\sup_{0\le t\le \sigma(T)}(t^{2-s}\|\rho^{\frac{1}{2}}\dot{u}\|_{L^2}^2),
\end{align}
and
\begin{align}\label{bu3-2}
\int_0^{\sigma(T)}t^{2-s}\||H|^2\|_{L^{4}}^{4}dt
& \le C\int_0^{\sigma(T)}t^{2-s}\|H\|_{L^{\infty}}^{4}\|H\|_{L^{4}}^{4}dt \nonumber \\
& \le C\int_0^{\sigma(T)}t^{2-s}(\|\nabla H\|_{L^{2}}^{4}\|\curl^2  H\|_{L^{2}}^{2}+\|\nabla H\|_{L^{2}}^{6})dt \nonumber \\
& \le CC_0^{2\delta_0}\sup_{0\le t\le \sigma(T)}(t^{2-s}\|\curl^2  H\|_{L^2}^2)+C.
\end{align}
Then combining \eqref{bu2} and \eqref{bu3}, we have
\begin{align}\label{bu4}
&\sup_{0\le t\le \sigma(T)}\sigma^{2-s}(\|\rho^{\frac{1}{2}}\dot{u}\|_{L^2}^2+\|H_t\|_{L^2}^2)+\int_0^{\sigma(T)}\sigma^{2-s}(\|\nabla\dot{u}\|_{L^2}^2+\|\nabla H_t\|_{L^2}^2)dt \nonumber \\
&\leq C_8C_0^{\delta_0}\sup_{0\le t\le \sigma(T)}\sigma^{2-s}(\|\rho^{\frac{1}{2}}\dot{u}\|_{L^2}^2+\|H_t\|_{L^2}^2)+C(\hat{\rho}, M_1, M_2),
\end{align}
Therefore, if we choose $C_0$ to be such that $C_0\leq \ve_3 \triangleq \min\{\bar{\ve}_3, (2C_8)^{-1/\delta_0}\}$, \eqref{bu4} and \eqref{h2xd3} implies \eqref{uv2}. The proof of Lemma \ref{lem-a1} is completed.
\end{proof}

\begin{lemma}\label{lem-a3} If $(\rho,u,H)$ is a smooth solution of \eqref{CMHD}-\eqref{boundary} satisfying \eqref{key1} and the initial data condition \eqref{dt2}, then there exists a positive constant  $\varepsilon_4$   depending only on $\mu ,  \lambda , \nu,  \ga ,  a ,  \on, \hat{\rho}, s,  \Omega$, $M_1$ and $M_2$ such that
\begin{align}\label{ba3}
\displaystyle  A_4(\sigma(T))+A_5(\sigma(T))\leq C_0^{\delta_0},
\end{align}
provided $C_0<\varepsilon_4$.
\end{lemma}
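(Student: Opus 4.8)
The plan is to bound $A_4(\sigma(T))$ and $A_5(\sigma(T))$ one at a time, in each case manufacturing the power $C_0^{\delta_0}$ by interpolating the \emph{small} elementary energy of Lemma~\ref{lem-basic} and \eqref{basic3} against the \emph{non-small} but time-weighted estimates \eqref{uv1}, \eqref{uv2} of Lemma~\ref{lem-a1}; the bootstrap hypotheses \eqref{key1} are available throughout. The magnetic part of $A_4(\sigma(T))$, namely $\sup_{t}\sigma^{\frac{3-2s}{4}}\|\nabla H\|_{L^2}^2$ together with $\int_0^{\sigma(T)}\sigma^{\frac{3-2s}{4}}(\|\curl^2 H\|_{L^2}^2+\|H_t\|_{L^2}^2)\,dt$, is \emph{exactly} the left-hand side of the Stein--Weiss estimate \eqref{tdh-s} with $1-\theta=\frac{3-2s}{4}$, i.e. $\theta=\frac{1+2s}{4}\le s$. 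Hence it is bounded by $C\|H_0\|_{H^\theta}^2\le C\|H_0\|_{L^2}^{2(1-\theta/s)}\|H_0\|_{H^s}^{2\theta/s}\le CM_2^{2\theta/s}C_0^{\frac{2s-1}{4s}}$, and since $\frac{2s-1}{4s}>\delta_0$ this is $\le C_0^{\delta_0}$ for $C_0$ small.

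For the velocity part of $A_4(\sigma(T))$ I would rerun the weighted identity \eqref{I01} leading to \eqref{I02}, but now with the \emph{degenerate} exponent $m=\frac{3-2s}{4}<1$ and integrating only over $(0,\sigma(T)]$, so that the time weight annihilates all initial-data contributions of the integrated-by-parts terms. The essential new difficulty is that, unlike the choice $m=1$ used in Lemma~\ref{lem-a0}, the weight-derivative term now reads $\int_0^{\sigma(T)}\sigma^{\frac{3-2s}{4}-1}\|\nabla u\|_{L^2}^2\,dt$ with a \emph{singular} factor $\sigma^{-\frac{1+2s}{4}}$, so it can no longer be dominated by $\int\|\nabla u\|_{L^2}^2\,dt$. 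I would instead split $\|\nabla u\|_{L^2}^2=(\|\nabla u\|_{L^2}^2)^{1-\beta}(\|\nabla u\|_{L^2}^2)^{\beta}$, estimate the last factor by \eqref{uv1} in the form $\|\nabla u\|_{L^2}^2\le Ct^{-(1-s)}M_1^2$, and apply H\"older in $t$ against $\int_0^{\sigma(T)}\|\nabla u\|_{L^2}^2\,dt\le CC_0$; choosing $\beta$ slightly above $\frac{1+2s}{4s}$ keeps the residual time weight integrable (this is where $s>\frac12$ is used) and yields a bound $CM_1^{2\beta}C_0^{1-\beta}$ with $1-\beta\in[\delta_0,\frac{2s-1}{4s})$. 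The cubic remainders $\int\sigma^{m}\|\nabla u\|_{L^3}^3$ and the magnetic couplings in \eqref{I02} are treated in the same spirit using \eqref{xdu1}, \eqref{h-tdh}, \eqref{tdh-s} and \eqref{key1}. It is precisely to leave room for all these separate interpolations that $\delta_0=\frac{2s-1}{9s}$ is chosen with a large denominator. Absorbing the small factors then gives $\sup_t\sigma^{\frac{3-2s}{4}}\|\nabla u\|_{L^2}^2+\int_0^{\sigma(T)}\sigma^{\frac{3-2s}{4}}\|\rho^{\frac12}\dot u\|_{L^2}^2\,dt\le CC_0^{\delta_0}$.

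For $A_5(\sigma(T))=\sup_t\int\rho|u|^3\,dx$ I would use the continuity equation to write $\frac{d}{dt}\int\rho|u|^3\,dx=3\int\rho|u|\,u\cdot\dot u\,dx\le C\|\rho^{\frac12}\dot u\|_{L^2}\big(\int\rho|u|^4\,dx\big)^{1/2}$, and then $\int\rho|u|^4\,dx\le C\|\rho^{\frac12}u\|_{L^2}\|u\|_{L^6}^3\le CC_0^{1/2}\|\nabla u\|_{L^2}^3$ by Lemma~\ref{lem-basic} and \eqref{tdu1}. Integrating over $(0,\sigma(T)]$, the initial term obeys $\int\rho_0|u_0|^3\,dx\le C\|\rho_0^{\frac12}u_0\|_{L^2}\|u_0\|_{L^4}^2\le CC_0^{1/2}M_1^2$, while the running term is controlled by Cauchy--Schwarz as $CC_0^{1/4}\big(\int_0^{\sigma(T)}\sigma^{\frac{3-2s}{4}}\|\rho^{\frac12}\dot u\|_{L^2}^2\,dt\big)^{1/2}\big(\int_0^{\sigma(T)}\sigma^{-\frac{3-2s}{4}}\|\nabla u\|_{L^2}^3\,dt\big)^{1/2}$; the first factor is $\le A_4(\sigma(T))^{1/2}\le(2C_0^{\delta_0})^{1/2}$ by \eqref{key1}, and the second is rendered finite and $O(M_1^{3/2})$ by interpolating \eqref{uv1} against $\int\|\nabla u\|_{L^2}^2\,dt\le CC_0$ (again using $s>\frac12$). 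Since $\tfrac12>\delta_0$, both pieces are $\le C_0^{\delta_0}$.

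The hardest point is the velocity part of $A_4$: unlike $H$, whose datum enters only through the interpolable bound \eqref{tdh-s}, the velocity carries the non-small quantity $\|u_0\|_{H^s}\le M_1$, so its smallness must be \emph{created} by trading the small dissipation $\int\|\nabla u\|_{L^2}^2\,dt\le CC_0$ against the weighted pointwise bound \eqref{uv1}. Verifying that every residual time weight produced this way is integrable on $(0,\sigma(T)]$ --- which fails exactly at $s=\frac12$, and is the structural reason for the hypothesis $s\in(\frac12,1]$ --- and that each extracted power of $C_0$ dominates $\delta_0=\frac{2s-1}{9s}$, is the crux of the argument; the rest is a weighted repetition of Lemmas~\ref{lem-a0}--\ref{lem-a1}.
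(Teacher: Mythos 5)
Your treatment of $A_5(\sigma(T))$ contains a genuine gap that kills the argument on part of the admissible range $s\in(\frac12,1]$. After writing $\frac{d}{dt}\int\rho|u|^3dx\le C\|\rho^{\frac12}\dot u\|_{L^2}\big(\int\rho|u|^4dx\big)^{1/2}\le CC_0^{1/4}\|\rho^{\frac12}\dot u\|_{L^2}\|\nabla u\|_{L^2}^{3/2}$ and splitting by Cauchy--Schwarz, you must control $\big(\int_0^{\sigma(T)}\sigma^{-\frac{3-2s}{4}}\|\nabla u\|_{L^2}^{3}\,dt\big)^{1/2}$ (the weight $\sigma^{-\frac{3-2s}{4}}$ is forced, because the companion factor can only be made small through $\int\sigma^{\frac{3-2s}{4}}\|\rho^{\frac12}\dot u\|_{L^2}^2\,dt\le A_4(\sigma(T))$). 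But the best pointwise decay available is $\|\nabla u\|_{L^2}^2\le C(M_1,M_2)\,t^{-(1-s)}$ from \eqref{uv1} (and this rate is sharp for $H^s$ data), so the integrand behaves like $t^{-\frac{3-2s}{4}-\frac{3(1-s)}{2}}=t^{-\frac{9-8s}{4}}$, which is integrable near $t=0$ only when $s>\frac58$; mixing in $\int\|\nabla u\|_{L^2}^2\,dt\le CC_0$, $\int\|\nabla u\|_{L^2}^4\,dt\le CC_0^{2\delta_0}$ or the bootstrap $\sigma^{\frac{3-2s}{4}}\|\nabla u\|_{L^2}^2\le 2C_0^{\delta_0}$ by H\"older does not lower this threshold (a short computation shows every admissible exponent combination again requires $s>\frac58$). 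So for $s\in(\frac12,\frac58]$ the quantity you need to be finite may actually be infinite. The structural reason is that your route puts a \emph{full} power of $\|\rho^{\frac12}\dot u\|_{L^2}$ into the estimate. The paper instead multiplies $\eqref{CMHD}_2$ by $3|u|u$ and integrates by parts (see \eqref{ba33}), so that the dangerous term is $\int|u||\nabla u|^2dx\le C\|\nabla u\|_{L^2}^{5/2}(\|\rho\dot u\|_{L^2}+\|\curl^2 H\|_{L^2})^{1/2}$ with only a \emph{half} power of the material derivative; after H\"older with exponent $4$ against $\int\sigma^{\frac{3-2s}{4}}\|\rho^{\frac12}\dot u\|_{L^2}^2dt$ the residual weight is $t^{\frac{3(2s-3)}{8}\cdot\frac43}=t^{-\frac{3-2s}{2}}$, integrable exactly for $s>\frac12$ (see \eqref{ba34}). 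A second, smaller defect in the same place: your initial-data bound $\int\rho_0|u_0|^3dx\le C\|\rho_0^{1/2}u_0\|_{L^2}\|u_0\|_{L^4}^2$ requires $\|u_0\|_{L^4}\le CM_1$, which in 3D needs $H^s\hookrightarrow L^4$, i.e. $s\ge\frac34$; the paper uses the $L^3$ interpolation \eqref{ba35}, valid for all $s>\frac12$.

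The $A_4$ part of your proposal is sound but largely unnecessary. The magnetic half, via \eqref{tdh-s} at $\theta=\frac{1+2s}{4}\le s$ and interpolation of $\|H_0\|_{H^\theta}$ between $\|H_0\|_{L^2}\le CC_0^{1/2}$ and $\|H_0\|_{H^s}\le M_2$, correctly yields $CC_0^{\frac{2s-1}{4s}}\le C_0^{\delta_0}$. For the velocity half, however, the paper does not rerun the energy identity with the degenerate exponent $m=\frac{3-2s}{4}$; it simply writes $t^{\frac{3-2s}{4}}=(t^{1-s})^{\frac{2s+1}{4s}}\,t^{\frac{2s-1}{4s}}$ and applies H\"older in $t$ to interpolate between the $M_1,M_2$-bounds of Lemma \ref{lem-a1} and $A_1(T)\le 2C_0^{1/2}$ from \eqref{key1}, obtaining $A_4(\sigma(T))\le CC_0^{\frac{2s-1}{8s}}\le C_0^{\delta_0}$ in two lines (see \eqref{ba4}). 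Your direct weighted energy estimate can probably be pushed through (your fix for the singular term $\int\sigma^{\frac{3-2s}{4}-1}\|\nabla u\|_{L^2}^2dt$ via \eqref{uv1} and H\"older is valid for $s>\frac12$, provided you take the extracted exponent $1-\beta$ \emph{strictly} above $\delta_0$ so the constants can be absorbed), but it repeats all of the boundary and coupling estimates of Lemma \ref{lem-a0} with a worse weight for no gain. The essential missing idea, then, is the integration by parts against $3|u|u$ in the $A_5$ estimate; without it the argument does not cover $s\in(\frac12,\frac58]$.
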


\begin{proof}
Multiplying $\eqref{CMHD}_2$ by $3|u|u$, and integrating the resulting equation over $ \O$ lead  to
\begin{align}\label{ba33}
&\quad \left(\int\rho|u|^{3}dx\right)_t \nonumber \\
&\leq C\int|u||\nabla u|^{2}dx+C\int|P-\bar{P}||u||\nabla u|dx+C\int|H||\nabla H||u|^2dx \nonumber \\
&\leq C\|\nabla u\|_{L^2}^{\frac{5}{2}}(\|\rho\dot{u}\|_{L^2}^{\frac{1}{2}}+\|\curl^2  H\|_{L^2}^{\frac{1}{2}})+C\|\nabla u\|_{L^2}^{\frac{5}{2}}\|\nabla H\|_{L^2}^{\frac{1}{2}}+CC_0^{\frac{1}{12}}\|\nabla u\|_{L^2}^{\frac{5}{2}} \nonumber \\
&\quad +C\|\nabla u\|_{L^2}^3+CC_0^{\frac{1}{3}}\|\nabla u\|_{L^2}^2+C(\|\nabla H\|_{L^2}^4+\|\nabla u\|_{L^2}^4).
\end{align}
Hence, integrating \eqref{ba33} over $(0,\sigma(T))$ and using \eqref{key1}, \eqref{basic3}, we get
\begin{align}\label{ba34}
&\quad\sup_{0\le t\le  \si(T) }\int\rho|u|^{3}dx \nonumber\\
&\leq C\int_0^{\sigma(T)}(t^{\frac{3-2s}{4}}\|\nabla u\|_{L^2}^2)^{\frac{5}{4}} (t^\frac{3-2s}{4}(\|\rho\dot{u}\|_{L^2}^2+\|\curl^2  H\|_{L^2}^2))^{\frac{1}{4}} t^\frac{3(2s-3)}{8} dt \nonumber \\
&\quad + C\int_0^{\sigma(T)}(t^{\frac{3-2s}{4}}\|\nabla u\|_{L^2}^2)^{\frac{5}{4}} (t^\frac{3-2s}{4}\|\nabla H\|_{L^2}^2)^{\frac{1}{4}} t^\frac{3(2s-3)}{8} dt \nonumber \\
&\quad + CC_0^{\frac{1}{12}}\int_0^{\sigma(T)}(t^{\frac{3-2s}{4}}\|\nabla u\|_{L^2}^2)^{\frac{5}{4}} t^\frac{5(2s-3)}{16} dt+CC_0^{2\delta_0}+\int\rho_0|u_0|^3dx \nonumber \\
&\leq CC_0^{3 \delta_0/2}+\int\rho_0|u_0|^3dx \leq C_9C_0^{3 \delta_0/2},
\end{align}
where we have used the fact $\delta_0\in (0, 1/9]$, $s\in (1/2,1]$ and
\begin{align}\label{ba35}
 \displaystyle \int\rho_0|u_0|^{3}dx\leq C\|\rho_0^{\frac{1}{2}}u_0\|_{L^{2}}^{{3(2s-1)}/{2s}}\|u_0\|_{H^s}^{{3}/{2s}}\leq CC_0^{3 \delta_0/2},
 \end{align}
Finally, set $\varepsilon_4\triangleq\min\{\varepsilon_3,(C_9)^{-\frac{2}{\delta_0}}\}$, we get $A_5({\sigma(T)})\leq C_0^{\delta_0}$.
Next, it remains to estimate $A_4({\sigma(T)})$. Using \eqref{uv1}, we have
\begin{align}\label{ba4}
A_4({\sigma(T)}) \leq & \sup_{0\le t\le\sigma(T)} (t^{1-s}\|\nabla u\|\ltwo)^{\frac{2s+1}{4s}} \sup_{0\le t\le\sigma(T)} (t\|\nabla u\|\ltwo)^{\frac{2s-1}{4s}} \nonumber \\
& +\sup_{0\le t\le\sigma(T)} (t^{1-s}\|\nabla H\|\ltwo)^{\frac{2s+1}{4s}} \sup_{0\le t\le\sigma(T)} (t\|\nabla H\|\ltwo)^{\frac{2s-1}{4s}} \nonumber \\
& +\left(\int_0^{\sigma(T)} t^{1-s}\|\rho^{1/2}\dot{u}\|\ltwo dt \right)^{\frac{2s+1}{4s}} \left(\int_0^{\sigma(T)} t\|\rho^{1/2}\dot{u}\|\ltwo dt \right)^{\frac{2s-1}{4s}} \nonumber \\
& +\left(\int_0^{\sigma(T)} t^{1-s}\|\curl^2  H\|\ltwo dt \right)^{\frac{2s+1}{4s}} \left(\int_0^{\sigma(T)} t\|\curl^2  H\|\ltwo dt \right)^{\frac{2s-1}{4s}} \nonumber \\
\leq & CA_1(T)^{\frac{2s-1}{4s}}\leq CC_0^{\frac{2s-1}{8s}}\leq C_0^{\delta_0},
\end{align}
provided $C_0\leq \ve_4$ and $\delta_0 \triangleq \frac{2s-1}{9s} < \frac{2s-1}{8s} $. The proof of Lemma \ref{lem-a3} is completed.
\end{proof}

\begin{lemma}\label{lem-a1a2} Let $(\rho,u,H)$ be a smooth solution of
 \eqref{CMHD}-\eqref{boundary} on $\O \times (0,T] $ satisfying \eqref{key1}. Then there exists a positive constant $\varepsilon_5$ depending only  on $\mu,$  $\lambda,$ $\nu,$ $\gamma,$ $a$, $s$, $\on$, $\hat{\rho}$, $\Omega$, $M_1$ and $ M_2$  such that
 \begin{align}\label{a1a2}
 \displaystyle  A_1(T)+A_2(T)\le C_0^{\frac{1}{2}},
 \end{align}
provided $C_0\leq\varepsilon_5$.
\end{lemma}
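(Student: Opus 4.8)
The plan is to close the bootstrap directly from the bounds \eqref{a01} and \eqref{a02} of Lemma \ref{lem-a0}, which already reduce the task to controlling the two nonlinear integrals $\int_0^T\sigma\|\nabla u\|_{L^3}^3\,dt$ and $\int_0^T\sigma^2\|\nabla u\|_{L^4}^4\,dt$. First I would treat the cubic term. Using \eqref{tdu2} with $p=3$, together with \eqref{tdu2} at $p=6$ to bound $\|\nabla u\|_{L^6}$, and the magnetic bound \eqref{h-tdh}, one expresses $\|\nabla u\|_{L^3}^3$ through $\|\rho\dot u\|_{L^2}$, $\|\curl^2 H\|_{L^2}$, $\|\nabla H\|_{L^2}$ and the lower-order quantities $\|\nabla u\|_{L^2}$, $\|P-\bar P\|_{L^2}$, $\|H\|_{L^4}^2$. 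A Young inequality then peels off a small multiple $\varepsilon\int_0^T\sigma\|\rho\dot u\|_{L^2}^2\,dt\le\varepsilon\, CA_1(T)$ (and, via \eqref{h-tdh} and \eqref{tdh}, small multiples of the magnetic entries of $A_1(T)$), leaving a remainder dominated by $\int_0^T\sigma\|\nabla u\|_{L^2}^6\,dt$ and its pressure/magnetic analogues.

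This remainder is estimated by splitting the time interval at $\sigma(T)$. On $[\sigma(T),T]$ one has $\sigma\equiv1$, so by \eqref{basic2} and \eqref{key1},
$\int_{\sigma(T)}^T\|\nabla u\|_{L^2}^6\,dt\le\big(\sup_{t\ge\sigma(T)}\|\nabla u\|_{L^2}^2\big)^2\int_{\sigma(T)}^T\|\nabla u\|_{L^2}^2\,dt\le A_1(T)^2\cdot CC_0\le CC_0^{2}$.
On $[0,\sigma(T)]$ the weight degenerates and $\|\nabla u\|_{L^2}^2$ is only of size $C_0^{\delta_0}$ through $A_4$; here I would interpolate each factor $\|\nabla u\|_{L^2}^2$ between the $A_1$-bound $\sigma\|\nabla u\|_{L^2}^2\le CC_0^{1/2}$ and the $A_4$-bound $\sigma^{(3-2s)/4}\|\nabla u\|_{L^2}^2\le CC_0^{\delta_0}$, writing $\|\nabla u\|_{L^2}^6=(\|\nabla u\|_{L^2}^2)^a(\|\nabla u\|_{L^2}^2)^{3-a}$ for a free exponent $a\in(0,3)$ to obtain $\int_0^{\sigma(T)}\sigma\|\nabla u\|_{L^2}^6\,dt\le CC_0^{a\delta_0+(3-a)/2}\int_0^{\sigma(T)}\sigma^{-2+a(1+2s)/4}\,dt$. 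The two requirements are that the $C_0$-power $a\delta_0+(3-a)/2$ exceed $1/2$ and that the residual power $-2+a(1+2s)/4$ exceed $-1$; recalling $\delta_0=(2s-1)/(9s)$, these amount to $4/(1+2s)<a<18s/(5s+2)$, and the factorization $18s^2-s-4=(2s-1)(9s+4)$ shows this window is nonempty precisely when $s>1/2$. Choosing such an $a$ yields $\int_0^{\sigma(T)}\sigma\|\nabla u\|_{L^2}^6\,dt\le CC_0^{1/2+\kappa}$ for some $\kappa=\kappa(s)>0$.

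The quartic integral $\int_0^T\sigma^2\|\nabla u\|_{L^4}^4\,dt$ is handled analogously: I would use \eqref{tdu2} at $p=4$ (or the interpolation $\|\nabla u\|_{L^4}\le C\|\nabla u\|_{L^2}^{1/4}\|\nabla u\|_{L^6}^{3/4}$ with \eqref{tdu2} at $p=6$), convert the surplus powers of $\|\rho\dot u\|_{L^2}$ into $\|\nabla\dot u\|_{L^2}$ and lower-order terms through the Poincar\'e-type bounds \eqref{udot}--\eqref{tdudot}, and peel off small multiples of $\int_0^T\sigma^2\|\nabla\dot u\|_{L^2}^2\,dt\le A_2(T)$ and of $\sup_t\sigma^2\|\rho\dot u\|_{L^2}^2\le CA_2(T)$. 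The leftover time integrals of powers of $\|\nabla u\|_{L^2}$ are again split at $\sigma(T)$ and controlled by the same $A_1$/$A_4$ interpolation together with \eqref{basic2}, \eqref{basic3}, giving $\int_0^T\sigma^2\|\nabla u\|_{L^4}^4\,dt\le\varepsilon\, CA_2(T)+CC_0^{1/2+\kappa}$.

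Assembling these into \eqref{a01} and absorbing $\varepsilon\,CA_1(T)$ yields $A_1(T)\le CC_0^{1/2+\kappa}$; feeding this into the $CA_1(T)$ term of \eqref{a02}, noting $CC_0^{1/2+2\delta_0/3}\le\frac14 C_0^{1/2}$, and absorbing $\varepsilon\,CA_2(T)$ gives $A_2(T)\le CC_0^{1/2+\kappa}$. Choosing $\varepsilon_5$ so small that $CC_0^{\kappa}\le\frac12$ whenever $C_0\le\varepsilon_5$ then produces $A_1(T)+A_2(T)\le C_0^{1/2}$. The main obstacle is the $[0,\sigma(T)]$ estimate of the degenerate-weight integrals: near $t=0$ only the weak $A_4$-bound of order $C_0^{\delta_0}$ is available, and recovering a $C_0$-power above $1/2$ while keeping the leftover weight integrable forces exactly the interpolation window $4/(1+2s)<a<18s/(5s+2)$, which collapses at $s=1/2$; the second delicate point is the bookkeeping of the magnetic coupling $H\cdot\nabla H$, which must be absorbed using the smallness factor $C_0^{\delta_0/3}$ of \eqref{h-tdh} and the $\curl^2 H$, $H_t$ entries of $A_1(T)$ and $A_2(T)$.
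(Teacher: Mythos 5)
Your proposal is correct and shares the paper's overall architecture: start from \eqref{a01}--\eqref{a02}, reduce everything to the two nonlinear integrals, split time at $\sigma(T)$, use the $A_4$ bound near $t=0$ where the weight degenerates and $A_1,A_2$ afterwards, and exploit $s>1/2$ for integrability of the leftover negative power of $t$. The one genuine divergence is the cubic term: the paper disposes of it in one line (see \eqref{al3}) via the Gagliardo--Nirenberg bound $\|\nabla u\|_{L^3}^3\le C(\|\nabla u\|_{L^2}\|\nabla u\|_{L^4}^2+\|\nabla u\|_{L^2}^3)$ and Young's inequality, which folds it into $CC_0+C\int_0^T\sigma^2\|\nabla u\|_{L^4}^4dt$, so only the quartic integral ever needs the delicate treatment; you instead run the effective-viscous-flux estimate \eqref{tdu2} at $p=3$ and then interpolate $(\|\nabla u\|_{L^2}^2)^3$ between the $A_1$-bound $\sigma\|\nabla u\|_{L^2}^2\le CC_0^{1/2}$ and the $A_4$-bound $\sigma^{(3-2s)/4}\|\nabla u\|_{L^2}^2\le CC_0^{\delta_0}$ with a free exponent $a$. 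Your window $4/(1+2s)<a<18s/(5s+2)$ and the factorization $18s^2-s-4=(2s-1)(9s+4)$ are correct, so this is a valid, if heavier, alternative. Two bookkeeping points: (i) the magnetic analogue $\int_0^{\sigma(T)}\sigma\|H\|_{L^4}^{12}dt$ in your cubic remainder should first be reduced via $\|H\|_{L^4}^2\le\|H\|_{L^3}\|H\|_{L^6}\le CC_0^{\delta_0/3}\|\nabla H\|_{L^2}$ (using $A_3(T)\le 2C_0^{\delta_0}$) before interpolating, since a direct $A_1$/$A_4$ interpolation of $(\|\nabla H\|_{L^2}^2)^{9/2}$ has an empty exponent window as $s$ approaches $1/2$; (ii) for the quartic integral the paper needs no absorption of $\varepsilon A_2(T)$ at all --- the three-factor splitting of $t^2\|\rho^{1/2}\dot u\|_{L^2}^3(\|\nabla u\|_{L^2}+\|\nabla H\|_{L^2})$ in \eqref{al4-l1} already yields $CC_0^{1/2+\delta_0}$ outright --- so your Young-plus-absorption route for that term is workable but not necessary.
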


\begin{proof} By \eqref{g1} and \eqref{basic2}, one can check that
\begin{align}\label{al3}
\int_0^{T}\sigma \|\nabla u\|_{L^3}^3 dt
&\le  C \int_0^{T}\sigma(\|\nabla u\|_{L^2}\|\nabla u\|_{L^4}^2+\|\nabla u\|_{L^2}^3)dt
\nonumber\\
&\le  C C_0+C \int_0^{T} \sigma^2\|\nabla u\|_{L^4}^4dt,
\end{align}
which, along with \eqref{a01} and \eqref{a02} gives
\begin{align}\label{a1a2-1}
\displaystyle  A_1(T)+A_2(T)\leq C(C_0^{1/2+\delta_0/2}+\int_0^T\sigma^2\|\nabla u\|_{L^{4}}^{4}dt).
\end{align}
So it reduces to estimate $\int_0^T\sigma^2\|\nabla u\|_{L^{4}}^{4}dt$.
On one hand, by \eqref{tdu2}, \eqref{key1}, \eqref{rho2-int} and Lemma \ref{lem-basic} again,  it indicates that
\begin{align}\label{al4-l}
&\quad\int_0^{\sigma(T)}t^{2}\|\nabla u\|_{L^{4}}^{4}dt \nonumber \\
& \le C\int_0^{\sigma(T)}t^{2}(\|\rho^{\frac{1}{2}}\dot{u}\|_{L^{2}}^{3}+\|\curl^2  H\|_{L^2}^3)(\|\nabla u\|_{L^{2}}+\|P-\bar{P}\|_{L^{2}}+\|\nabla H\|_{L^2})dt \nonumber \\
& \quad +C\int_0^{\sigma(T)}t^{2}(\|\nabla u\|_{L^{2}}^{4}+\|P-\bar{P}\|_{L^{2}}^{4}+\|P-\bar{P}\|_{L^{4}}^{4}+\|\nabla H\|_{L^2}^4+\||H|^2\|_{L^{4}}^{4})dt \nonumber \\
& \le CC_0^{1/2+\delta_0}+CC_0+CC_0^{1/2+2\delta_0}  \le CC_0^{1/2+\delta_0},
\end{align}
since it follows from \eqref{key1} that for $s\in (1/2,1]$,
\begin{align}\label{al4-l1}
&\quad\int_0^{\sigma(T)}t^{2}\|\rho^{\frac{1}{2}}\dot{u}\|_{L^{2}}^{3}(\|\nabla u\|_{L^2}+\|\nabla H\|_{L^2})dt \nonumber \\
& \le C\int_0^{\sigma(T)}t^{\frac{2s-3}{4}}(t^{\frac{3-2s}{4}}\big(\|\nabla u\|_{L^2}^2+\|\nabla H\|_{L^2}^2)\big)^{\frac{1}{2}}(t^{\frac{3-2s}{4}}\|\rho^{\frac{1}{2}}\dot{u}\|_{L^{2}}^{2})^{\frac{1}{2}}(t^{2}\|\rho^{\frac{1}{2}}\dot{u}\|_{L^2}^2)dt \nonumber \\
& \le CC_0^{1/2+\delta_0},
\end{align}
and
\begin{align}\label{al4-l2}
&\quad\int_0^{\sigma(T)}t^{2}\||H|^2\|_{L^{4}}^{4}dt
\le C\int_0^{\sigma(T)}t^{2}\|H\|_{L^{3}}^{2}\|\nabla H\|_{L^{2}}^{4}\|\nabla H\|_{L^{6}}^{2}dt \nonumber \\
& \le C\int_0^{\sigma(T)}t^{2}(\|\nabla H\|_{L^{2}}^{4}\|\curl^2  H\|_{L^{2}}^{2}+\|\nabla H\|_{L^{2}}^{6})dt  \le CC_0^{1/2+2\delta_0}.
\end{align}
On the other hand, by \eqref{key1}, \eqref{basic2} and Lemma \ref{lem-a1}, we have
\begin{align}\label{al4-h}
&\quad\int_{\sigma(T)}^{T}\sigma^{2}\|\nabla u\|_{L^{4}}^{4}dt \nonumber \\
& \le C\int_{\sigma(T)}^{T}\sigma^{2}(\|\rho^{\frac{1}{2}}\dot{u}\|_{L^{2}}^{3}+\|\curl^2  H\|_{L^2}^3)(\|\nabla u\|_{L^{2}}+\|P-\bar{P}\|_{L^{2}}+\|\nabla H\|_{L^2})dt \nonumber \\
& \quad +C\int_{\sigma(T)}^{T}\sigma^{2}(\|\nabla u\|_{L^{2}}^{4}+\|P-\bar{P}\|_{L^{2}}^{4}+\|P-\bar{P}\|_{L^{4}}^{4}+\|\nabla H\|_{L^2}^4+\||H|^2\|_{L^{4}}^{4})dt \nonumber \\
& \le CC_0+CC_0^{1/2+2\delta_0} \le CC_0^{1/2+2\delta_0}.
\end{align}
Combining \eqref{al4-l} and \eqref{al4-h}, it follows from \eqref{a1a2-1} that
\begin{align}\label{a1a2-2}
\displaystyle  A_1(T)+A_2(T)\leq C_{10}C_0^{1/2+\delta_0/2}.
\end{align}
Set $\varepsilon_5\triangleq\min\{\varepsilon_4,(C_{10}^{-2/\delta_0}\}$, from \eqref{a1a2-2}, \eqref{a1a2} holds when $C_0<\varepsilon_5$. The proof of Lemma \ref{lem-a1a2} is completed.
\end{proof}

We now proceed to proof the uniform (in time) upper bound for the
density.

\begin{lemma}\label{lem-brho}
Let $(\rho,u,H)$ be a smooth solution of
 \eqref{CMHD}-\eqref{boundary} on $\O \times (0,T] $ satisfying \eqref{key1}. Then there exists a positive constant $\varepsilon_6$ depending only  on $\mu,$  $\lambda,$ $\nu,$ $\gamma,$ $a$, $s$, $\on$, $\hat{\rho}$, $\Omega$, $M_1$ and $ M_2$  such that
 \begin{align}\label{brho}
 \displaystyle  \sup_{0\le t\le T}\|\n(t)\|_{L^\infty}  \le
\frac{7\hat{\rho} }{4}  ,
 \end{align}
provided $C_0\le \ve_6. $
\end{lemma}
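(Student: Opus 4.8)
The plan is to bound the density pointwise by following it along particle trajectories and invoking a Zlotnik-type ODE argument, exactly as for the Navier--Stokes system \cite{cl2019,lxz2013}, treating the magnetic contribution as a lower-order forcing. For fixed $x$ let $X(t)$ be the characteristic determined by $\dot X=u(X,t)$; since $u\cdot n=0$ on $\partial\Omega$, these curves never leave $\bar\Omega$, so the method is legitimate in the bounded domain. Expressing $\div u$ through the effective viscous flux \eqref{flux}, the continuity equation $\eqref{CMHD1}_1$ gives along $X(t)$
\begin{equation*}
\frac{d}{dt}\rho(X(t),t)=g(\rho)+b'(t),\qquad g(\rho)\triangleq-\frac{a}{\lambda+2\mu}\rho\big(\rho^{\gamma}-\bar\rho^{\gamma}\big),\qquad b'(t)\triangleq-\frac{\rho}{\lambda+2\mu}\Big(F+\tfrac12|H|^{2}\Big),
\end{equation*}
where I used $P-\bar P=a(\rho^{\gamma}-\bar\rho^{\gamma})$ and mass conservation $\tfrac1{|\Omega|}\int\rho\,dx=\bar\rho$. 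Here $g$ is continuous, $g(\zeta)\le0$ for $\zeta\ge\bar\rho$, and $g(\zeta)\to-\infty$ as $\zeta\to\infty$, which are precisely the structural hypotheses needed for Zlotnik's lemma.

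I would apply Zlotnik's lemma separately on $[0,\sigma(T)]$ and $[\sigma(T),T]$, because near $t=0$ the controls on $\rho\dot u$ and $\nabla\dot u$ are only available with the time weights of Lemma \ref{lem-a1}. In both regimes the crux is to estimate the increment $b(t_2)-b(t_1)$ in the form $N_0+N_1(t_2-t_1)$ with $N_0,N_1$ small powers of $C_0$. Using $\rho\le2\hat\rho$ from \eqref{key1},
\begin{equation*}
b(t_2)-b(t_1)\le\frac{2\hat\rho}{\lambda+2\mu}\int_{t_1}^{t_2}\Big(\|F\|_{L^\infty}+\tfrac12\|H\|_{L^\infty}^2\Big)\,ds .
\end{equation*}

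The magnetic term is the easy one: by Gagliardo--Nirenberg with \eqref{tdh1} and \eqref{2tdh}, $\|H\|_{L^\infty}^2\le C\|\nabla H\|_{L^2}(\|\curl^2 H\|_{L^2}+\|\nabla H\|_{L^2})$, so \eqref{basic2} together with $\int_0^T\sigma\|\curl^2 H\|_{L^2}^2\,dt\le A_1(T)\le2C_0^{1/2}$ yields $\int_{t_1}^{t_2}\|H\|_{L^\infty}^2\,ds\le CC_0^{3/4}$, a pure $N_0$-contribution. For $F$ I would use $\|F\|_{L^\infty}\le C\|F\|_{L^6}^{1/2}\|\nabla F\|_{L^6}^{1/2}$, the flux bounds \eqref{tdf1} and \eqref{f-lp} with $p=6$, and \eqref{udot} to get $\|\nabla F\|_{L^6}\le C(\|\nabla\dot u\|_{L^2}+\|\nabla u\|_{L^2}^2+\|H\cdot\nabla H\|_{L^6})$ and $\|F\|_{L^6}\le C(\|\rho\dot u\|_{L^2}+\|\nabla u\|_{L^2}+\|P-\bar P\|_{L^2}+\|H\|_{L^4}^2)$. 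On $[\sigma(T),T]$, where $\sigma\equiv1$, Cauchy--Schwarz in time combined with $A_1(T)+A_2(T)\le2C_0^{1/2}$, \eqref{basic2}, \eqref{basic3} and \eqref{rho2-int} gives $\int_{t_1}^{t_2}\|F\|_{L^\infty}\,ds\le CC_0^{\beta}\big[(t_2-t_1)+1\big]$ for some $\beta>0$; the linearly growing part is produced precisely by the non-decaying factors $\|P-\bar P\|_{L^2}$ and $\|\nabla u\|_{L^2}$, which are only bounded by $C_0^{1/2}$ pointwise in time. On $[0,\sigma(T)]$ the same quantities now carry the weights of \eqref{uv1}--\eqref{uv2}, and the resulting singularity of order $t^{-(3-2s)/4}$ is integrable for $s\in(\tfrac12,1]$, so $\int_0^{\sigma(T)}\|F\|_{L^\infty}\,ds\le CC_0^{\beta}$. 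The main obstacle is exactly this bookkeeping: one must check that after all the interpolations the time weights assemble into a genuinely integrable singularity at $t=0$, and that on the long interval the linear coefficient $N_1$ is a small power of $C_0$ rather than an $O(1)$ constant; this is what dictates the exponents in \eqref{key1} and the choice $\delta_0=\tfrac{2s-1}{9s}$.

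Granting these increments, Zlotnik's lemma on $[0,\sigma(T)]$ (with $N_1=0$, $\bar\zeta=\bar\rho$) gives $\rho(X(t),t)\le\max\{\rho_0(X(0)),\bar\rho\}+CC_0^{\beta}\le\hat\rho+CC_0^{\beta}\le\tfrac32\hat\rho$ for $C_0$ small. Taking $\rho(X(\sigma(T)),\sigma(T))\le\tfrac32\hat\rho$ as the new initial value and applying Zlotnik on $[\sigma(T),T]$ with $N_0=N_1=CC_0^{\beta}$: since $N_1$ is small the threshold $\bar\zeta$ solving $g(\bar\zeta)=-N_1$ satisfies $\bar\zeta\le\bar\rho+1\le\hat\rho$, whence $\rho(X(t),t)\le\max\{\tfrac32\hat\rho,\hat\rho\}+CC_0^{\beta}=\tfrac32\hat\rho+CC_0^{\beta}$. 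Choosing $\varepsilon_6$ so small that $CC_0^{\beta}\le\tfrac14\hat\rho$ whenever $C_0\le\varepsilon_6$ gives $\sup_{0\le t\le T}\|\rho(t)\|_{L^\infty}\le\tfrac74\hat\rho$, which is \eqref{brho}.
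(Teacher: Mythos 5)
Your proposal follows the paper's proof essentially step for step: the Zlotnik reformulation of the continuity equation with $g(\rho)=-\frac{a\rho(\rho^{\gamma}-\bar\rho^{\gamma})}{\lambda+2\mu}$ and $b'=-\frac{\rho}{\lambda+2\mu}\bigl(F+\frac12|H|^2\bigr)$, the interpolation $\|F\|_{L^\infty}\le C\|F\|_{L^6}^{1/2}\|\nabla F\|_{L^6}^{1/2}$ fed by \eqref{tdf1}, \eqref{f-lp} and the time-weighted bounds of Lemma \ref{lem-a1} on $[0,\sigma(T)]$, and a second application of Zlotnik's lemma on $[\sigma(T),T]$. The only cosmetic divergence is that on the long interval you insist on making $N_1$ a small power of $C_0$ so that the threshold stays near $\bar\rho$, whereas the paper simply takes $N_1=\frac{a}{\lambda+2\mu}$ of size $O(1)$ and compensates by choosing $\bar\zeta=\frac{3\hat\rho}{2}$ (legitimate because $\hat\rho\ge\bar\rho+1$ gives $g(\zeta)\le-\frac{a}{\lambda+2\mu}$ for $\zeta\ge\bar\zeta$); both variants close the argument, so this is not a gap.
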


\begin{proof}
First, the equation of  mass conservation $\eqref{CMHD}_1$ can be equivalently rewritten in the form
\begin{align}\label{rho1}
\displaystyle  D_t \n=g(\rho)+b'(t),
\end{align}
where
\begin{align}
 \displaystyle D_t\rho\triangleq\rho_t+u \cdot\nabla \rho ,\,
g(\rho)\triangleq-\frac{\rho(P-\bar{P})}{2\mu+\lambda},\,
 b(t)\triangleq-\frac{1}{2\mu+\lambda} \int_0^t\rho (F+\frac{|H|^2}{2})dt.
 \end{align}
Naturally, we shall prove our conclusion by Lemma \ref{lem-z}. It is sufficient to check that the function $b(t)$ must verify \eqref{a100} with some suitable constants $N_0$, $N_1$.

For $t\in[0,\sigma(T)],$ one deduces from \eqref{g1}, \eqref{g2}, \eqref{tdf1}, \eqref{tdxd-u1}, \eqref{udot}, \eqref{key1} and Lemmas \ref{lem-basic}, \ref{lem-a1} that for $\delta_0$ as in Proposition \ref{pr1} and for all $0\leq t_1\leq t_2\leq\sigma(T)$,
\begin{align}\label{bl1}
&\quad |b(t_2)-b(t_1)| =\frac{1}{\lambda+2\mu}\left|\int_{t_1}^{t_2}\rho (F+\frac{|H|^2}{2})dt\right|\le C\int_0^{\sigma(T)}(\|F\|_{L^{\infty}}+ \|H\|^2_{L^{\infty}})dt \nonumber\\
& \le C\int_0^{\sigma(T)}\|F\|_{L^{6}}^{\frac{1}{2}}\|\nabla F\|_{L^{6}}^{\frac{1}{2}}dt+C\int_0^{\sigma(T)}\|F\|_{L^{2}}dt+C\int_0^{\sigma(T)}\|H\|_{L^{6}}\|\nabla H\|_{L^{6}} dt \nonumber\\
& \leq C\int_0^{\sigma(T)}(\|\rho^{\frac{1}{2}}\dot{u}\|_{L^2}^{\frac{1}{2}}+\|\curl^2  H\|_{L^2}^{\frac{1}{2}})\|\nabla \dot{u}\|_{L^{2}}^{\frac{1}{2}}dt\nonumber\\
& \quad + C\int_0^{\sigma(T)}(\|\rho^{\frac{1}{2}}\dot{u}\|_{L^2}^{\frac{1}{2}}+\|\curl^2  H\|_{L^2}^{\frac{1}{2}})\|\nabla H\|_{L^{2}}^{\frac{1}{4}}\|\curl^2  H\|_{L^{2}}^{\frac{3}{4}}dt\nonumber\\
& \quad + C\int_0^{\sigma(T)}(\|\rho^{\frac{1}{2}}\dot{u}\|_{L^2}^{\frac{1}{2}}+\|\curl^2  H\|_{L^2}^{\frac{1}{2}})(\|\nabla u\|_{L^{2}}+\|\nabla H\|_{L^{2}})dt\nonumber\\
& \quad + C\int_0^{\sigma(T)}(\|\nabla u\|_{L^2}^{\frac{1}{2}}+\|\nabla H\|_{L^2}^{\frac{1}{2}}+\|P-\bar{P}\|_{L^2}^{\frac{1}{2}})\|\nabla \dot{u}\|_{L^{2}}^{\frac{1}{2}}dt\nonumber\\
& \quad + C\int_0^{\sigma(T)}(\|\nabla u\|_{L^2}^{\frac{1}{2}}+\|\nabla H\|_{L^2}^{\frac{1}{2}}+\|P-\bar{P}\|_{L^2}^{\frac{1}{2}})\|\nabla H\|_{L^{2}}^{\frac{1}{4}}\|\curl^2  H\|_{L^{2}}^{\frac{3}{4}}dt\nonumber\\
& \quad + C\int_0^{\sigma(T)}(\|\nabla u\|_{L^2}^{\frac{1}{2}}+\|\nabla H\|_{L^2}^{\frac{1}{2}}+\|P-\bar{P}\|_{L^2}^{\frac{1}{2}})(\|\nabla u\|_{L^{2}}+\|\nabla H\|_{L^{2}})dt\nonumber\\
& \quad + C\int_0^{\sigma(T)}(\|\nabla u\|_{L^2}+\|\nabla H\|_{L^2}+\|P-\bar{P}\|_{L^2})dt\nonumber\\
& \quad + C\int_0^{\sigma(T)}\|\nabla H\|_{L^2}(\|\nabla H\|_{L^2}+\|\curl^2  H\|_{L^2})dt \triangleq \sum_{i=1}^8 B_i.
\end{align}
We have to estimate $B_i, i=1,2,\cdots,8$ one by one. A directly computation gives
\begin{align}\label{bl-b1}
B_1 &\leq C\int_0^{\sigma(T)}\big(t^{\frac{3-2s}{4}}(\|\rho^{\frac{1}{2}}\dot{u}\|_{L^2}^2+\|\curl^2  H\|_{L^2}^2)\big)^{\frac{1}{4}}\big(t^{2-s}\|\nabla \dot{u}\|_{L^{2}}^2\big)^{\frac{1}{4}}t^{\frac{6s-11}{16}}dt\nonumber\\
&\leq C\left(\int_0^{\sigma(T)}t^{\frac{3-2s}{4}}(\|\rho^{\frac{1}{2}}\dot{u}\|_{L^2}^2+\|\curl^2  H\|_{L^2}^2)dt\right)^{\frac{1}{4}} \nonumber \\
 & \qquad \times \left(\int_0^{\sigma(T)}t^{2-s}\|\nabla \dot{u}\|_{L^{2}}^2dt\right)^{\frac{1}{4}}\left(\int_0^{\sigma(T)}t^{\frac{6s-11}{8}}dt\right)^{\frac{1}{2}} \leq CC_0^{\delta_0/4},
\end{align}
similarly,
\begin{align}
B_2 &\leq C\int_0^{\sigma(T)}\big(t^{2-s}(\|\rho^{\frac{1}{2}}\dot{u}\|_{L^2}^2+\|\curl^2  H\|_{L^2}^2)\big)^{\frac{1}{4}}\big(t^{\frac{3-2s}{4}}\|\nabla H\|_{L^{2}}^2\big)^{\frac{1}{8}}\nonumber \\
 & \qquad \times\big(t^{\frac{3-2s}{4}}\|\curl^2  H\|_{L^{2}}^2\big)^{\frac{3}{8}}t^{\frac{4s-7}{8}}dt
 \leq CC_0^{\delta_0/2},\label{bl-b2}\\
B_3 &\leq C\int_0^{\sigma(T)}\big(t^{\frac{3-2s}{4}}(\|\rho^{\frac{1}{2}}\dot{u}\|_{L^2}^2+\|\curl^2  H\|_{L^2}^2)\big)^{\frac{1}{4}}\nonumber \\
 & \qquad \times\big(t^{\frac{3-2s}{4}}(\|\nabla u\|_{L^{2}}^2+\|\nabla H\|_{L^{2}}^2)\big)^{\frac{1}{2}}t^{\frac{3(2s-3)}{8}}dt
 \leq CC_0^{3\delta_0/4},\label{bl-b3}\\
B_4 &\leq C\int_0^{\sigma(T)}\big(\|\nabla u\|_{L^2}^{2}+\|\nabla H\|_{L^2}^{2}+\|P-\bar{P}\|_{L^2}^{2}\big)^{\frac{1}{4}}\big(t^{2-s}\|\nabla \dot{u}\|_{L^{2}}^2\big)^{\frac{1}{4}}t^{\frac{s-2}{4}}dt\nonumber\\
&\leq CC_0^{1/4},\label{bl-b4}\\
B_5 &\leq C\int_0^{\sigma(T)}\big(t^{\frac{3-2s}{4}}(\|\nabla u\|_{L^2}^{2}+\|\nabla H\|_{L^2}^{2})\big)^{\frac{1}{4}}\big(t^{\frac{3-2s}{4}}\|\nabla H\|_{L^{2}}^2\big)^{\frac{1}{8}}\nonumber \\
 & \qquad \times\big(t^{\frac{3-2s}{4}}\|\curl^2  H\|_{L^{2}}^2\big)^{\frac{3}{8}}t^{\frac{3(2s-3)}{16}}dt \nonumber \\
 & \quad +CC_0^{\frac{1}{4}}\int_0^{\sigma(T)}\big(t^{\frac{3-2s}{4}}\|\nabla H\|_{L^{2}}^2\big)^{\frac{1}{8}}\big(t^{\frac{3-2s}{4}}\|\curl^2  H\|_{L^{2}}^2\big)^{\frac{3}{8}}t^{\frac{2s-3}{8}}dt\nonumber\\
&\leq CC_0^{3\delta_0/4}+CC_0^{1/4+\delta_0/2}\leq CC_0^{3\delta_0/4},\label{bl-b5}\\
B_6 &\leq C\int_0^{\sigma(T)}\big(t^{\frac{3-2s}{4}}(\|\nabla u\|_{L^2}^{2}+\|\nabla H\|_{L^2}^{2})\big)^{\frac{3}{4}}t^{\frac{3(2s-3)}{16}}dt \nonumber \\
 & +CC_0^{\frac{1}{4}}\int_0^{\sigma(T)}\big(t^{\frac{3-2s}{4}}(\|\nabla u\|_{L^2}^{2}+\|\nabla H\|_{L^2}^{2})\big)^{\frac{1}{2}}t^{\frac{2s-3}{8}}dt\nonumber\\
&\leq CC_0^{3\delta_0/4}+CC_0^{1/4+\delta_0/2}\leq CC_0^{3\delta_0/4},\label{bl-b6}\\
B_7 &  =\int_0^{\sigma(T)}(\|\nabla u\|_{L^2}+\|\nabla H\|_{L^2}+\|P-\bar{P}\|_{L^2})dt \leq CC_0^{1/2},\label{bl-b7}\\
B_8 &\leq \int_0^{\sigma(T)}\big(t^{\frac{3-2s}{4}}\|\nabla H\|_{L^{2}}^2\big)^{\frac{1}{2}}\big(t^{\frac{3-2s}{4}}\|\curl^2  H\|_{L^{2}}^2\big)^{\frac{1}{2}}t^{\frac{2s-3}{4}}dt+CC_0dt \nonumber \\&
\leq CC_0^{1/2+\delta_0/2}.\label{bl-b8}
\end{align}
Putting \eqref{bl-b1}-\eqref{bl-b8} into \eqref{bl1}, we have
\begin{align}\label{bl2}
|b(t_2)-b(t_1)| \leq C_{11}C_0^{\delta_0/4}.
\end{align}
Combining \eqref{bl2} with \eqref{rho1} and choosing $N_1=0$, $N_0=C_{11}C_0^{\delta_0/4}$, $\bar{\zeta}=\hat{\rho}$ in Lemma \ref{lem-z} give
\begin{align}\label{rho2}
\displaystyle  \sup_{t\in [0,\si(T)]}\|\rho\|_{L^\infty} \le \hat{\rho}
+C_{11}C_0^{\delta_0/4} \le\frac{3\hat{\rho}}{2},
\end{align}
provided $C_0\le \hat{\ve}_6\triangleq\min\{\varepsilon_5, \left(\frac{\hat{\rho}}{2C_{11}}\right)^{\frac{4}{\delta_0}}\}. $

On the other hand, for $t\in[\sigma(T),T],\,\,\sigma(T)\le t_1\le t_2\le T ,$ it follows from \eqref{tdf1}, \eqref{udot}, \eqref{key1}, \eqref{rho2-int} and Lemma \ref{lem-basic} that
\begin{align}\label{br1}
& \quad |b(t_2)-b(t_1)| \le C\int_{t_1}^{t_2}(\|F\|_{L^{\infty}}+\|H\|^2_{L^{\infty}})dt \nonumber\\
&\le \frac{a}{\lambda+2\mu}(t_2-t_1)+C\int_{t_1}^{t_2}\|F\|_{L^{\infty}}^{8/3}dt +C\int_{t_1}^{t_2}\|H\|_{L^{\infty}}^2dt \nonumber\\
& \le \frac{a}{\lambda+2\mu}(t_2-t_1)+CC_0^{\frac{1}{6}}\int_{\sigma(T)}^{T}(\|\nabla \dot{u}\|_{L^2}^{2}+\|\nabla H\|_{L^2}\|\curl^2  H\|_{L^2}^3+\|\nabla H\|_{L^2}^4)dt\nonumber\\
&\quad +CC_0 +C\int_{t_1}^{t_2}(\|\nabla H\|_{L^2}\|\curl^2  H\|_{L^2}+\|\nabla H\|_{L^2}^2)dt \nonumber \\
& \le \frac{a}{\lambda+2\mu}(t_2-t_1)+C_{12}C_0^{2/3}.
\end{align}
Now we choose $N_0=C_{12}C_0^{2/3}$, $N_1=\frac{a}{\lambda+2\mu}$ in \eqref{a100} and set $\bar\zeta= \frac{3\hat{\rho}}{2}$ in \eqref{a101}. Since for all $  \zeta \geq\bar{\zeta}=\frac{3\hat{\rho}}{2}>\bar{\rho}+1$,
$$ g(\zeta)=-\frac{ a\zeta}{2\mu+\lambda}(\zeta^{\gamma}-\bar{\rho}^{\gamma})\le -\frac{a}{\lambda+2\mu}= -N_1. $$
Together with \eqref{rho1} and \eqref{br1}, by Lemma \ref{lem-z}, we have
\begin{align}\label{rho3}
\displaystyle \sup_{t\in
[\si(T),T]}\|\rho\|_{L^\infty}\le \frac{ 3\hat \rho }{2} +C_{12}C_0^{2/3} \le
\frac{7\hat \rho }{4} ,
\end{align}
provided $C_0\le \ve_6 \triangleq\min\{\hat{\ve}_6, (\frac{ \hat \n }{4C_{12}})^{3/2}\}$.
The combination of \eqref{rho2} with \eqref{rho3} completes the
proof of Lemma \ref{lem-brho}.
\end{proof}

\section{\label{se4} A priori estimates (II): higher order estimates }
In this section, we derive the time-dependent higher order estimates, which are necessary for the global existence of classical solutions. Here we adopt the method of the article \cite{cl2019,lxz2013,jx01}, and follow their work with a few modifications. We sketch it here for completeness. Let $(\rho,u,H)$ be a smooth solution of \eqref{CMHD}-\eqref{boundary} satisfying Proposition \ref{pr1} and the initial energy $C_0\leq \ve_6$, and the positive constant $C $ may depend on $T,$ $\mu$, $\lambda$, $\nu$, $a$, $\ga$, $\on,$ $\hat{\rho},$  $s,$  $\Omega$, $M_1, M_2$,  $\|\na u_0\|_{H^1}, \|\na H_0\|_{H^1}, \|\n_0-\bar{\rho}\|_{W^{2,q}},  \| g\|_{L^2}, \|P(\n_0)-\bar{P}\|_{W^{2,q}}$ for $q\in(3,6)$ where $g\in L^2(\Omega)$ is given by compatibility condition \eqref{dt3}.

\begin{lemma}\label{lem-x1}
 There exists a positive constant $C,$ such that
\begin{align}
&\sup_{0\le t\le T}(\|\nabla u\|_{L^2}^2+\|\nabla H\|_{L^2}^2)+\int_0^T(\|\rho^{\frac{1}{2}}\dot{u}\|_{L^2}^2+\| H_t\|_{L^2}^2+\|\nabla^2 H\|_{L^2}^2)dt\leq C,\label{x1b1}\\
&\sup_{0\le t\le T}(\|\rho^{\frac{1}{2}}\dot{u}\|_{L^2}^2+\| H_t\|_{L^2}^2+\|\nabla^2 H\|_{L^2}^2)+\int_0^T(\|\nabla\dot{u}\|_{L^2}^{2}+\|\nabla H_t\|_{L^2}^2)dt\leq C,\label{x1b2}\\
&\sup_{0\le t\le T}(\|\nabla\rho\|_{L^6}+\|u\|_{H^2})+\int_0^T(\|\nabla u\|_{L^\infty}+\|\nabla^{2} u\|_{L^6}^{2})dt\leq C.\label{x2b1}
\end{align}
\end{lemma}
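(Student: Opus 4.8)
The plan is to upgrade the $\sigma$-weighted, time-\emph{independent} bounds of Section \ref{se3} into genuine estimates on all of $[0,T]$, now permitting constants that depend on $T$ and on the higher norms of the initial data. The mechanism is to repeat the energy identities behind Lemmas \ref{lem-a0} and \ref{lem-tdh} with the weight exponent $m=0$; the boundary and initial contributions that were previously annihilated by $\sigma$ are now controlled because $u_0,H_0\in H^2$ and, crucially, because the compatibility condition \eqref{dt3} forces $\rho_0^{1/2}\dot u(\cdot,0)=-g\in L^2$, so that $\|\rho^{1/2}\dot u(\cdot,0)\|_{L^2}=\|g\|_{L^2}<\infty$; likewise $\|H_t(\cdot,0)\|_{L^2}$ is finite through \eqref{CMHD1}$_3$ since $H_0\in H^2$. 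Throughout, the bounds already granted by Proposition \ref{pr1} (in particular $\sup\rho\le 2\hat\rho$ and the smallness of $A_1,A_2,A_3$) are used freely.

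For \eqref{x1b1} I would integrate \eqref{I01} with $m=0$ over $(0,T]$: the left side produces $\|\nabla u\|_{L^2}^2+\int_0^T\|\rho^{1/2}\dot u\|_{L^2}^2dt$, the boundary term is absorbed exactly as in \eqref{J0b1}, and the remaining right side is handled by \eqref{rho2-int}, Lemma \ref{lem-basic} and \eqref{basic3}. The magnetic part $\sup_t\|\nabla H\|_{L^2}^2+\int_0^T(\|\curl^2  H\|_{L^2}^2+\|H_t\|_{L^2}^2)dt\le C$ is precisely \eqref{tdh-4}, and $\|\nabla^2 H\|_{L^2}$ is controlled through \eqref{2tdh}. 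For \eqref{x1b2} I would add the $m=0$ versions of \eqref{J01} and \eqref{ht3} and run Gronwall. The only delicate term is $\int_0^T\|\nabla u\|_{L^4}^4dt$; by \eqref{tdu2} with $p=4$ and $\sup\rho\le2\hat\rho$ one gets $\|\nabla u\|_{L^4}^4\le C\|\rho^{1/2}\dot u\|_{L^2}^3+C\|\curl^2  H\|_{L^2}^3+C$, whence, using \eqref{x1b1}, $\int_0^T\|\nabla u\|_{L^4}^4dt\le C\bigl(\sup_t\|\rho^{1/2}\dot u\|_{L^2}\bigr)\int_0^T\|\rho^{1/2}\dot u\|_{L^2}^2dt+C\le C\bigl(\sup_t\|\rho^{1/2}\dot u\|_{L^2}\bigr)+C$; feeding this back and absorbing the resulting $\sup_t\|\rho^{1/2}\dot u\|_{L^2}$ by Young's inequality closes \eqref{x1b2}.

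The core of the lemma is \eqref{x2b1}. First, \eqref{2tdu} with $p=2$ together with \eqref{x1b2} gives $\|\nabla^2 u\|_{L^2}\le C(1+\|\nabla\rho\|_{L^2})$, so $\|u\|_{H^2}$ is controlled once $\|\nabla\rho\|_{L^6}$ is. Applying $\nabla$ to \eqref{CMHD}$_1$, testing against $|\nabla\rho|^4\nabla\rho$, and integrating by parts yields
\begin{align*}
\frac{d}{dt}\|\nabla\rho\|_{L^6}\le C\bigl(1+\|\nabla u\|_{L^\infty}\bigr)\|\nabla\rho\|_{L^6}+C\|\nabla^2 u\|_{L^6}.
\end{align*}
Here $\|\nabla^2 u\|_{L^6}$ is estimated by \eqref{2tdu} with $p=6$: the term $\|\rho\dot u\|_{L^6}\le C\|\dot u\|_{L^6}\le C(1+\|\nabla\dot u\|_{L^2})$ by \eqref{udot} and \eqref{x1b1}, $\|\nabla P\|_{L^6}\le C\|\nabla\rho\|_{L^6}$, and the magnetic terms are uniformly bounded since $\sup_t\|H\|_{H^2}\le C$ from \eqref{x1b1}--\eqref{x1b2} embeds into $W^{1,6}$; hence $\|\nabla^2 u\|_{L^6}\le C(1+\|\nabla\dot u\|_{L^2}+\|\nabla\rho\|_{L^6})$. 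Finally $\|\nabla u\|_{L^\infty}$ is handled by the logarithmic interpolation $\|\nabla u\|_{L^\infty}\le C\bigl(1+\|\div u\|_{L^\infty}+\|\omega\|_{L^\infty}\bigr)\ln\!\bigl(e+\|\nabla^2 u\|_{L^6}\bigr)+C\|\nabla u\|_{L^2}$, with $\|\div u\|_{L^\infty}+\|\omega\|_{L^\infty}\le C(1+\|\nabla\dot u\|_{L^2})$ deduced from $F$, $P-\bar P$, $|H|^2$ via \eqref{tdf1}--\eqref{tdxd-u1}.

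Assembling these into the functional $\psi(t)\triangleq e+\|\nabla\rho\|_{L^6}$ produces, after setting $g=\ln\psi$, a linear inequality $g'(t)\le C\,\Theta(t)\,g(t)+C\,\Theta(t)$ with $\Theta(t)\triangleq(1+\|\nabla\dot u\|_{L^2})^2\in L^1(0,T)$, the integrability coming from $\int_0^T\|\nabla\dot u\|_{L^2}^2dt\le C$ in \eqref{x1b2}; Gronwall then gives $\sup_{[0,T]}\|\nabla\rho\|_{L^6}\le C$, after which $\|u\|_{H^2}\le C$, $\int_0^T\|\nabla^2 u\|_{L^6}^2dt\le C$ and $\int_0^T\|\nabla u\|_{L^\infty}dt\le C$ follow directly. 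I expect the main obstacle to be precisely this last coupling: closing the loop between $\|\nabla\rho\|_{L^6}$, $\|\nabla^2 u\|_{L^6}$ and $\|\nabla u\|_{L^\infty}$ forces the logarithmic estimate and a careful verification that every coefficient multiplying $\psi\ln\psi$ is integrable in time, with the additional MHD coupling terms (the Lorentz force $H\cdot\nabla H$ and its derivatives) kept under control by the magnetic regularity \eqref{2tdh}--\eqref{3tdh} and the bounds \eqref{x1b1}--\eqref{x1b2}.
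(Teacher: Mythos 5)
Your treatment of \eqref{x1b2} and \eqref{x2b1} follows essentially the same route as the paper: the $m=0$ versions of \eqref{J01} and \eqref{ht3}, closed by bounding $\int_0^T\|\nabla u\|_{L^4}^4dt$ through \eqref{tdu2} and Young's inequality and by using the compatibility condition \eqref{dt3} to give $\|\rho^{1/2}\dot u(\cdot,0)\|_{L^2}\le\|g\|_{L^2}$; then the Beale--Kato--Majda/logarithmic Gronwall argument for $\|\nabla\rho\|_{L^6}$, which is exactly what the paper delegates to \cite{cl2019}, and your accounting of the extra MHD terms there is correct. The one place you genuinely diverge is \eqref{x1b1}: the paper simply takes $s=1$ in \eqref{uv1} and $\theta=1$ in \eqref{tdh-s} (legitimate in Section 4, where constants may depend on $\|\nabla u_0\|_{H^1}$ and $\|\nabla H_0\|_{H^1}$), whereas you integrate \eqref{I01} directly with $m=0$. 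That can be made to work, but your claim that the remaining right-hand side is ``handled by \eqref{rho2-int}, Lemma \ref{lem-basic} and \eqref{basic3}'' overlooks the now unweighted term $\int_0^T\|\nabla u\|_{L^3}^3\,dt$, which none of those references control; you would need the interpolation $\|\nabla u\|_{L^3}^3\le C\|\nabla u\|_{L^2}^{3/2}\|\nabla u\|_{L^6}^{3/2}$ together with \eqref{tdu2}, absorbing the resulting $\|\rho^{1/2}\dot u\|_{L^2}^{3/2}$ and $\|\curl^2 H\|_{L^2}^{3/2}$ factors into the left side and handling the leftover $\int_0^T\|\nabla u\|_{L^2}^6dt$ (a superlinear feedback that closes only by invoking the smallness of $C_0$, as in \eqref{al3}). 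Citing \eqref{uv1} as the paper does sidesteps this extra step entirely; otherwise your argument is sound.
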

\begin{proof}
First, taking $\theta=1$ in \eqref{tdh-s} and taking $s=1$ in \eqref{uv1} along with \eqref{2tdh} gives \eqref{x1b1}.
Then choosing $m=0$ in \eqref{J01} and \eqref{ht3}, integrating them over $(0,T)$, by \eqref{J0b1}, \eqref{x1b1} and the compatibility condition \eqref{dt3}, we have
\begin{align}\label{x1b4}
\displaystyle  &\quad\sup_{0\le t\le T}(\|\rho^{\frac{1}{2}}\dot{u}\|_{L^2}^2+\| H_t\|_{L^2}^2+\|\nabla^2 H\|_{L^2}^2)+\int_0^T(\|\nabla\dot{u}\|_{L^2}^{2}+\|\nabla H_t\|_{L^2}^2)dt \nonumber\\
& \leq C+C\int_0^T (\|\rho^{\frac{1}{2}}\dot{u}\|_{L^2}^3+\|\nabla^2 H\|_{L^2}^3+\|\nabla H\|_{L^2}^4\|\nabla^2 H\|_{L^2}^2) dt \nonumber \\ 
&\leq C+\frac{1}{2}\sup_{0\le t\le T}(\|\rho^{\frac{1}{2}}\dot{u}\|_{L^2}^2+\|\nabla^2 H\|_{L^2}^2),
\end{align}
where we have also used Lemma \ref{lem-gn}, Lemma \ref{lem-f-td}, \eqref{h-tdh} and \eqref{h2xd1}, then we deduce \eqref{x1b2} from \eqref{x1b4}.
Based on the Beale-Kato-Majda type inequality (see Lemma \ref{lem-bkm}), we can derive \eqref{x2b1}, the estimates on the gradient of density and velocity, in arguments similar to \cite{cl2019}. This finishes the proof.
\end{proof}

\begin{lemma}\label{lem-x3}
There exists a positive constant $C$ such that
\begin{align}
& \sup_{0\le t\le T}\|\rho^{\frac{1}{2}}u_t\|_{L^2}^2 + \ia\int|\nabla u_t|^2dxdt\le C, \label{x3b}\\
& \sup\limits_{0\le t\le T}\left(\|{\rho\!-\!\! \bar{\rho}}\|_{H^2}\! +\!
 \|{P\!-\!\! \bar{P}}\|_{H^2}\!+\!
   \|\rho_t\|_{H^1}\!+\!\|P_t\|_{H^1}\right)
    \!+\!\! \int_0^T\!\!\left(\|\n_{tt}\|_{L^2}^2\!+\!\|P_{tt}\|_{L^2}^2\right)dt
\le C, \label{x4b} \\
& \sup\limits_{0\le t\le T}\sigma (\|\nabla u_t\|_{L^2}^2+\|\nabla H_t\|_{L^2}^2)
    + \int_0^T\sigma(\|\rho^{\frac{1}{2}}u_{tt}\|_{L^2}^2+\|H_{tt}\|_{L^2}^2)dt
\le C.\label{x4bb}
\end{align}
\end{lemma}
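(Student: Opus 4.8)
The plan is to derive the three estimates in order, starting from Lemma \ref{lem-x1} and using throughout the elliptic estimates of Lemma \ref{lem-f-td} (in particular \eqref{2tdu}--\eqref{3tdu}) together with the Beale--Kato--Majda inequality (Lemma \ref{lem-bkm}). For \eqref{x3b} I would simply decompose $u_t=\dot u-u\cdot\nabla u$. Since \eqref{x1b2} already gives $\sup_{0\le t\le T}\|\rho^{\frac12}\dot u\|_{L^2}^2\le C$ and $\int_0^T\|\nabla\dot u\|_{L^2}^2\,dt\le C$, while \eqref{x2b1} gives $\sup_{0\le t\le T}\|u\|_{H^2}\le C$ (hence $\|u\|_{L^\infty}+\|\nabla u\|_{L^6}\le C$), I would estimate $\|\rho^{\frac12}(u\cdot\nabla u)\|_{L^2}\le C\|u\|_{L^\infty}\|\nabla u\|_{L^2}\le C$ and $\|\nabla(u\cdot\nabla u)\|_{L^2}\le C(\|\nabla u\|_{L^4}^2+\|u\|_{L^\infty}\|\nabla^2 u\|_{L^2})\le C$, and \eqref{x3b} follows at once.

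For the $H^2$ bounds in \eqref{x4b} I would work with the transport equations for $\rho$ and $P$, namely $\rho_t+u\cdot\nabla\rho+\rho\,\div u=0$ and $P_t+u\cdot\nabla P+\gamma P\,\div u=0$. Applying $\nabla^2$, testing against $\nabla^2\rho$ and $\nabla^2 P$ respectively, and adding yields a differential inequality of the form
\[
\frac{d}{dt}\big(\|\nabla^2\rho\|_{L^2}^2+\|\nabla^2 P\|_{L^2}^2\big)\le C\big(1+\|\nabla u\|_{L^\infty}\big)\big(\|\nabla^2\rho\|_{L^2}^2+\|\nabla^2 P\|_{L^2}^2\big)+C\|\nabla^3 u\|_{L^2}\big(\|\nabla^2\rho\|_{L^2}+\|\nabla^2 P\|_{L^2}\big)+C,
\]
where the lower order and magnetic contributions are absorbed using \eqref{x1b1}--\eqref{x1b2} and $\|\nabla\rho\|_{L^6}+\|\nabla P\|_{L^6}\le C$ from \eqref{x2b1}. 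The crucial point is to control $\|\nabla^3 u\|_{L^2}$ by the elliptic estimate \eqref{3tdu}, which bounds it by $\|\nabla\dot u\|_{L^2}$, $\|\nabla^2 P\|_{L^2}$, the magnetic terms and lower order quantities; the $\|\nabla^2 P\|_{L^2}$ produced in this way is absorbed back into the left side after a Young inequality, leaving the time-integrable source $C\|\nabla\dot u\|_{L^2}^2$. Gronwall's inequality then closes the estimate, the coefficient $1+\|\nabla u\|_{L^\infty}$ being time-integrable by \eqref{x2b1} and the source being time-integrable by \eqref{x1b2}. The remaining bounds in \eqref{x4b} follow algebraically: $\|\rho_t\|_{H^1}$ and $\|P_t\|_{H^1}$ from $\rho_t=-u\cdot\nabla\rho-\rho\,\div u$ (and its $P$-analogue) together with the $H^2$ bounds just obtained and $\|u\|_{H^2}\le C$; and $\int_0^T(\|\rho_{tt}\|_{L^2}^2+\|P_{tt}\|_{L^2}^2)\,dt$ by differentiating $\rho_t=-\div(\rho u)$ in time, writing $\rho_{tt}=-\div(\rho_t u+\rho u_t)$, and using \eqref{x3b} together with $\|\rho_t\|_{H^1}\le C$.

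For the $\sigma$-weighted second order estimates \eqref{x4bb} I would differentiate the momentum equation \eqref{CMHD1}$_2$ in time and test against $\sigma u_{tt}$, differentiate the magnetic equation \eqref{CMHD1}$_3$ in time and test against $\sigma H_{tt}$, then add the two identities. Because $u\cdot n=0$ and $\curl u\times n=0$ hold for all $t$ and $n$ is time-independent, we have $u_{tt}\cdot n=0$ and $\curl u_t\times n=0$ on $\partial\Omega$, so the boundary integrals produced by integrating the viscous and the $\nu\nabla\times\curl$ terms by parts all vanish; this leaves $\int\sigma\rho|u_{tt}|^2+\tfrac12\tfrac{d}{dt}[\sigma((\lambda+2\mu)\|\div u_t\|_{L^2}^2+\mu\|\curl u_t\|_{L^2}^2)]$ and the analogous magnetic quantity $\int\sigma|H_{tt}|^2+\tfrac{\nu}{2}\tfrac{d}{dt}(\sigma\|\curl H_t\|_{L^2}^2)$ on the left. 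The $\sigma'$ terms are controlled on $[0,1]$ by $\int_0^1(\|\nabla u_t\|_{L^2}^2+\|\nabla H_t\|_{L^2}^2)\,dt\le C$ from \eqref{x3b} and \eqref{x1b2}, which is precisely why the weight $\sigma$ is needed to compensate the lack of control of $u_{tt}$ at $t=0$. On the right-hand side the pressure term is integrated by parts in time to bring out $\int\sigma P_{tt}\,\div u_t$, controlled via \eqref{x4b}; the low-regularity terms containing $\rho_t$ are handled by writing $\rho_t=-\div(\rho u)$ and integrating by parts; and the fluid--magnetic coupling terms are matched between the two identities. Young's inequality and Gronwall then give \eqref{x4bb}.

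I expect the main obstacle to be the $H^2$ estimate of the density and pressure in \eqref{x4b}: the simultaneous coupling among $\|\nabla^2\rho\|_{L^2}$, $\|\nabla^2 P\|_{L^2}$ and $\|\nabla^3 u\|_{L^2}$ through the elliptic estimate \eqref{3tdu}, together with the verification that the Gronwall coefficient is integrable in time via the Beale--Kato--Majda inequality and \eqref{x2b1}, are the delicate points. The genuinely new feature relative to the Navier--Stokes case of \cite{cl2019} is the bookkeeping of the magnetic source terms $H\cdot\nabla H$ and $\nabla\times(u\times H)$ and their derivatives, which must be absorbed using the $\curl H$ and $\curl^2 H$ bounds of \eqref{tdhk}, \eqref{2tdh}, \eqref{3tdh} rather than $\nabla H$ and $\nabla^2 H$ directly.
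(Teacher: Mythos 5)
Your proposal is correct and follows essentially the same route as the paper: the paper defers \eqref{x3b}--\eqref{x4b} to the Navier--Stokes argument of \cite{cl2019} (which is exactly the decomposition $u_t=\dot u-u\cdot\nabla u$ and the transport/elliptic/Gronwall scheme you describe), and for \eqref{x4bb} it likewise differentiates $\eqref{CMHD}_{2,3}$ once in time, tests with $u_{tt}$ and $H_{tt}$, kills the boundary terms via $u_{tt}\cdot n=H_{tt}\cdot n=0$ and $\omega_t\times n=\curl H_t\times n=0$, moves time derivatives off $u_{tt}$ in the $\rho_t$-, $P_t$- and $(H\otimes H)_t$-terms, and closes with the weight $\sigma$ and Gronwall. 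The only cosmetic difference is that the paper derives the unweighted identity for $K(t)=(\lambda+2\mu)\|\div u_t\|_{L^2}^2+\mu\|\omega_t\|_{L^2}^2+\nu\|\curl H_t\|_{L^2}^2$ first and multiplies by $\sigma$ afterwards, rather than testing against $\sigma u_{tt}$ directly.
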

\begin{proof} Based on Lemma \ref{lem-x1}, \eqref{x3b}-\eqref{x4b} can be obtained by the same method as that in \cite{cl2019}.
It remains to prove \eqref{x4bb}. Introducing the function
$$K(t)=(\lambda+2\mu)\int(\div u_t)^{2}dx+\mu\int|\omega_t|^{2}dx +\nu \int|\curl H_t|^{2}dx.$$
Since $u_t\cdot n = 0, H_t \cdot n=0$ on $\partial\Omega$, by Lemma \ref{lem-vn}, we have
\begin{align}\label{x4b6}
\displaystyle  \|\nabla u_t\|_{L^2}^2+\|\nabla H_t\|_{L^2}^2\leq C(\Omega)K(t).
\end{align}
Differentiating  $\eqref{CMHD}_{2,3}$  with respect to $t,$
\begin{align}\label{utt}\rho u_{tt}\!-\!(\lambda\!+\!2 \mu)\nabla \div u_t\!+\!\mu \nabla \!\times\! \omega_t\!=-\nabla\! P_t\!-\!\rho_t u_t\!-\!(\rho u\! \cdot\! \nabla\! u)_t\!+\!(H\! \cdot\! \nabla\! H\!-\!\nabla |H|^2/2)_t, \end{align} 
and 
\begin{align}\label{htt} \quad H_{tt}-\nu \nabla \times \curl H_t =(H \cdot \nabla u-u \cdot \nabla H-H\div u)_t, \end{align} 
then multiplying \eqref{utt} by $2u_{tt}$, multiplying \eqref{htt} $2H_{tt}$ respectively, we obtain
\begin{align}\label{x4b7}
\displaystyle &\quad\frac{d}{dt}K(t)+2\int(\rho|u_{tt}|^2+|H_{tt}|^2)dx \nonumber \\
&=\frac{d}{dt}\Big(-\int\rho_t|u_t|^{2}dx-2\int\rho_tu\cdot\nabla u\cdot u_tdx+2\int P_t\div u_tdx \nonumber \\
&\qquad\quad -\int (2(H \otimes H)_t : \nabla u_t-|H|^2_t\div u_t dx)\Big)\nonumber \\
&\quad +\int\rho_{tt}|u_t|^{2}dx + 2\int(\rho_tu\cdot\nabla u)_t\cdot u_tdx-2\int\rho (u\cdot\nabla u)_t\cdot u_{tt}dx \nonumber\\
&\quad - 2\int P_{tt}\div u_tdx+\int (2(H \otimes H)_{tt} : \nabla u_t-|H|^2_{tt}\div u_t) dx\nonumber\\
&\quad +2\int (H \cdot \nabla u-u \cdot \nabla H- H \div u)_t \cdot H_{tt} dx\nonumber\\
&\triangleq\frac{d}{dt}K_0 + \sum\limits_{i=1}^6 K_i .
\end{align}
Let us estimate $K_i$, $i=0,1,\cdots, 6.$
We conclude from $\eqref{CMHD}_1$, \eqref{udot}, \eqref{x1b2}, \eqref{x2b1}, \eqref{x3b}, \eqref{x4b}, \eqref{x4b6} and Sobolev's, Poincar\'{e}'s  inequalities that
\begin{align}\label{x4k0}
K_0 & \le \left|\int{\rm div}(\rho u)\,|u_t|^2dx\right|+C\norm[L^3]{\rho_t}\| u\|_{L^\infty}\|\nabla u\|_{L^2}\norm[L^6]{u_t}+C\|P_t\|_{L^2}\|\nabla u_t\|_{L^2} \nonumber \\
&\quad + C \|H\|_{L^\infty}\|H_t\|_{L^2}\|\nabla u_t\|_{L^2} \nonumber \\
&\le \frac{1}{2}K(t)+C,
\end{align}
\begin{align} \label{x4k1}
K_1 &\leq \left|\int_{ }\rho_{tt}\, |u_t|^2 dx\right|= \left|\int_{ }\div(\rho u)_t\,|u_t|^2 dx\right|= 2\left|\int_{ }(\rho_tu + \rho u_t)\cdot\nabla u_t\cdot u_tdx\right|\nonumber\\
& \le C\|\nabla u_t\|_{L^2}^2 K(t)+C\|\nabla u_t\|_{L^2}^2+C,\\
K_2&+K_3+K_4 
 \le C\norm[L^2]{\rho_{tt}}^2 + C\norm[L^2]{\nabla u_t}^2+\norm[L^2]{\rho^{{1/2}}u_{tt}}^2 +C\norm[L^2]{P_{tt}}^2 +C,\label{x4k2} \\
K_5 
 &\leq \frac{1}{2}\|H_{tt}\|_{L^2}^2+C\|H_{t}\|_{L^2}^2K(t)+C(\|\nabla H_t\|_{L^2}^2+\|\nabla u_t\|_{L^2}^2),\label{x4k5}\\
K_6 
 &\leq \frac{1}{2}\|H_{tt}\|_{L^2}^2+C(\|\nabla H_t\|_{L^2}^2+\|\nabla u_t\|_{L^2}^2).\label{x4k6}
\end{align}
Consequently, multiplying \eqref {x4b7} by $\sigma$, together with \eqref{x4k1}-\eqref{x4k6}, we get
\begin{align}\label{x4b8}
\displaystyle  &\quad\frac{d}{dt}(\sigma K(t)-\sigma K_0)+\sigma\int(\rho|u_{tt}|^{2}+|H_{tt}|^2)dx \nonumber \\
&\le C(1+\|\nabla u_t\|_{L^2}^2)\sigma K(t)+C(1+\|\nabla u_t\|_{L^2}^2+\|\nabla H_t\|_{L^2}^2+\|\rho_{tt}\|_{L^2}^2+\|P_{tt}\|_{L^2}^2),
\end{align}
By Gronwall's inequality, \eqref{x1b2}, \eqref{x3b}, \eqref{x4b} and \eqref{x4k0}, we derive that
\begin{align}\label{x4b9}
\displaystyle \sup_{0\le t\le T}(\sigma K(t))+\int_0^T\sigma(\|\rho^{\frac{1}{2}}u_{tt}\|_{L^2}^2+\|H_{tt}\|_{L^2}^2)dt\le C .
\end{align}
As a result, by \eqref{x4b6}, we get \eqref{x4bb}. This finishes the proof .
\end{proof}

\begin{lemma}\label{lem-x5}
There exists a positive constant $C$ so that for any $q\in(3,6),$
\begin{align}
&  \sup_{t\in[0,T]}\left(\|\rho- \bar{\rho}\|_{W^{2,q}} +\|P-\bar{P}\|_{W^{2,q}}\right)\le C,\label{x5b}\\
& \sup_{t\in[0,T]} \si (\|\nabla u\|_{H^2}^2+\|\nabla H\|_{H^2}^2)\nonumber \\
 & \qquad  +\int_0^T \left(\|\nabla u\|_{H^2}^2+\|\nabla H\|_{H^2}^2 +\|\na^2 u\|^{p_0}_{W^{1,q}}+\si\|\na u_t\|_{H^1}^2\right)dt\le C,\label{x5bb}
\end{align}
where $p_0=\frac{9q-6}{10q-12}\in(1,\frac{7}{6}).$
\end{lemma}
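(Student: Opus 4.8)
The plan is to reduce every quantity in \eqref{x5b}--\eqref{x5bb} to controllable norms of the high derivatives of $u$ and $H$ via the elliptic estimates already recorded, and then to close a coupled Gronwall argument in which the transport equation for $\rho$ and the Lam\'e estimate for $u$ feed into one another. Throughout I work on the fixed interval $[0,T]$ and allow constants to depend on $T$ and on the data, as declared before Lemma \ref{lem-x1}.

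First I would establish the $H^2$ bounds on $\nabla u$ and $\nabla H$ in \eqref{x5bb}. For the velocity, \eqref{2tdu}--\eqref{3tdu} express $\|\nabla^2 u\|_{L^2}$ and $\|\nabla^3 u\|_{L^2}$ through $\rho\dot u$, $\nabla(\rho\dot u)$, $\nabla P$, $H\cdot\nabla H$ and their gradients; inserting \eqref{x1b2}, \eqref{x4bb} and the pressure/magnetic bounds from Lemmas \ref{lem-x1}--\ref{lem-x3} yields $\sup_t\sigma\|\nabla u\|_{H^2}^2+\int_0^T\|\nabla u\|_{H^2}^2\,dt\le C$. For the magnetic field I would differentiate \eqref{CMHD1}$_3$, run an $L^2$ energy estimate for $\curl^2 H$ and its gradient using the diffusive structure, and convert back to $\nabla^k H$ through \eqref{tdhk}, \eqref{2tdh}, \eqref{3tdh}, the forcing terms $H\cdot\nabla u$, $u\cdot\nabla H$, $H\div u$ being absorbed by the velocity bounds just obtained. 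The term $\int_0^T\sigma\|\nabla u_t\|_{H^1}^2\,dt$ is then obtained by viewing the $t$-differentiated momentum equation \eqref{utt} as a Lam\'e system for $u_t$, applying Lemma \ref{lem-lame} to gain $\|\nabla^2 u_t\|_{L^2}\le C(\|\rho u_{tt}\|_{L^2}+\cdots)$, and integrating against $\sigma$ using $\int_0^T\sigma\|\rho^{1/2}u_{tt}\|_{L^2}^2\,dt\le C$ from \eqref{x4bb}.

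Next I would prove \eqref{x5b}. Applying $\nabla^2$ to $\rho_t+u\cdot\nabla\rho+\rho\div u=0$, testing against $|\nabla^2\rho|^{q-2}\nabla^2\rho$, and handling the commutator together with the term $\rho\,\nabla^2\div u$ gives a differential inequality of the form $\frac{d}{dt}\|\nabla^2\rho\|_{L^q}\le C(1+\|\nabla u\|_{L^\infty}+\|\nabla^2 u\|_{L^q})\|\nabla^2\rho\|_{L^q}+C\|\nabla^2 u\|_{W^{1,q}}$, where I use $\|\nabla\rho\|_{L^\infty}\le C(1+\|\nabla^2\rho\|_{L^q})$ from the Sobolev embedding $W^{1,q}\hookrightarrow L^\infty$. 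Since the Lam\'e estimate for $\nabla^3 u$ produces $\|\nabla^2 P\|_{L^q}\sim\|\nabla^2\rho\|_{L^q}$ on the right, the density and velocity bounds are genuinely coupled, so I would close them simultaneously, absorbing the $\|\nabla^2\rho\|_{L^q}$ part of $\|\nabla^2 u\|_{W^{1,q}}$ into the Gronwall coefficient. As $\int_0^T\|\nabla u\|_{L^\infty}\,dt\le C$ by \eqref{x2b1}, Gronwall's inequality reduces everything to the single bound $\int_0^T\|\nabla^2 u\|_{W^{1,q}}\,dt\le C$; the $W^{2,q}$ estimate for $P-\bar P$ then follows from $P=a\rho^\gamma$ and the chain rule, using $\|\nabla\rho\|_{L^{2q}}\le C$.

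The hard part is the integrability of $\|\nabla^2 u\|_{W^{1,q}}$ near $t=0$, and this is where the exponent $p_0=\frac{9q-6}{10q-12}$ is forced. The elliptic bound \eqref{3tdu} controls $\|\nabla^3 u\|_{L^q}$ by $\|\nabla(\rho\dot u)\|_{L^q}$ plus lower-order terms, whose dominant constituent is $\|\nabla\dot u\|_{L^q}$. After writing $\dot u=u_t+u\cdot\nabla u$ and absorbing the convective contribution through the $H^2$ bounds on $u$, I would estimate it by the Gagliardo--Nirenberg interpolation $\|\nabla u_t\|_{L^q}\le C\|\nabla u_t\|_{L^2}^{1-\theta}\|\nabla^2 u_t\|_{L^2}^{\theta}$ with $\theta=\frac{3(q-2)}{2q}\in(\tfrac12,1)$, and then integrate the $p_0$-th power in time by Hölder, balancing the non-weighted bound $\int_0^T\|\nabla u_t\|_{L^2}^2\,dt\le C$ and the weighted bounds $\sup_t\sigma\|\nabla u_t\|_{L^2}^2\le C$, $\int_0^T\sigma\|\nabla^2 u_t\|_{L^2}^2\,dt\le C$ against the appropriate negative power of $\sigma$. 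The requirement that this power of $\sigma$ remain integrable at $t=0$ is precisely what constrains $p_0$, and I expect the bookkeeping of these competing weights---rather than any single inequality---to be the principal technical obstacle. Once $\int_0^T\|\nabla^2 u\|_{W^{1,q}}^{p_0}\,dt\le C$ is in hand, Hölder (with $p_0>1$ and finite $T$) gives $\int_0^T\|\nabla^2 u\|_{W^{1,q}}\,dt\le C$, and the coupled Gronwall argument of the previous paragraph closes, completing the proof.
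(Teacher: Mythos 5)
Your proposal follows essentially the same route as the paper's proof: the Lam\'e/elliptic estimates reduce $\|\nabla u\|_{H^2}$, $\|\nabla H\|_{H^2}$ and $\|\nabla^2 u_t\|_{L^2}$ to $\|\nabla u_t\|_{L^2}$, $\|\nabla H_t\|_{L^2}$ and $\|\rho^{1/2}u_{tt}\|_{L^2}$, the $\sigma$-weighted interpolation of $\|\nabla\dot u\|_{L^q}$ between the unweighted and weighted bounds of Lemma \ref{lem-x3} yields $\int_0^T\|\nabla(\rho\dot u)\|_{L^q}^{p_0}dt\le C$, and a Gronwall argument on the transported second derivatives (the paper does $P$ first, you do $\rho$ first, which is immaterial) closes the coupled system via \eqref{x5b10}. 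Your identification of the origin of $p_0$ and of the absorption of the $\|\nabla^2 P\|_{L^q}$ contribution into the Gronwall coefficient matches \eqref{x5b7}--\eqref{x5b12}, so the plan is correct and not genuinely different from the paper's.
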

\begin{proof}
Let's start with \eqref{x5bb}.  By Lemma \ref{lem-x1} and Poincar\'{e}'s, Sobolev's inequalities, one can check that
\begin{align}\label{x5b1}
\|\nabla (\n \dot u) \|_{L^2}&\le \||\nabla \n ||  u_t|  \|_{L^2}\!+\! \|\n\nabla   u_t  \|_{L^2}\! +\! \||\nabla \n|| u||\nabla u| \|_{L^2}\! +\! \|\n|\nabla  u|^2\|_{L^2}\!+\! \|  \n |u || \nabla^2 u| \|_{L^2}\nonumber \\
 &\le C+C\| \nabla   u_t  \|_{L^2}.
\end{align}
Consequently, together with \eqref{x4b} and Lemma \ref{lem-x1}, it yields
\begin{align}\label{x5b2}
\|\nabla^2 u\|_{H^1} &\le C (\|\rho \dot u\|_{H^1}+\|H \cdot \nabla H\|_{H^1}+ \| P-\bar{P}\|_{H^2}+ \| |H|^2\|_{H^2}+\|u\|_{L^2})\nonumber \\
 &\le C+C \|\na  u_t\|_{L^2}.
\end{align}
It then follows from \eqref{x5b2}, \eqref{x2b1}, \eqref{x3b} and \eqref{x4bb} that
\begin{align}\label{x5b3}
\displaystyle \sup\limits_{0\le
t\le T}\si\|\nabla  u\|_{H^2}^2+\ia \|\nabla  u\|_{H^2}^2dt \le
 C.
\end{align}
Next, from \eqref{CMHD1}$_3$, \eqref{3tdh}, it follows
\begin{align}\label{x5h1}
\|\nabla^2 H\|_{H^1} 
&\le C (\|H_t\|_{H^1}+\|u \cdot \nabla H\|_{H^1}+ \|H \cdot \nabla u\|_{H^1}+ \|H \div u\|_{H^1}+\|\nabla H\|_{L^2})\nonumber \\
 &\le C+C \|\nabla H_t\|_{L^2}.
\end{align}
Similarly, from \eqref{x5b1}, \eqref{x1b1} and \eqref{x2b1}, we obtain
\begin{align}\label{x5h2}
\displaystyle \sup\limits_{0\le
t\le T}\si\|\nabla  H\|_{H^2}^2+\ia \|\nabla  H\|_{H^2}^2dt \le
 C.
\end{align}
 Next, we deduce from Lemma \ref{lem-x1} and \eqref{x4b} that
\begin{align}\label{x5b4}
\displaystyle  \|\na^2u_t\|_{L^2}
&\le C(\|(\rho\dot{u})_t\|_{L^2}+\|\nabla P_t\|_{L^2}+\|((\nabla \times H)\times H)_t\|_{L^2}+\|u_t\|_{L^2}) \nonumber \\
&\le C\|\n^{\frac{1}{2}}  u_{tt}\|_{L^2}+C\|\nabla  u_t\|_{L^2} +C\|\nabla  H_t\|_{L^2}+C,
\end{align}
where in the first inequality, we have utilized the $L^p$-estimate for the following elliptic system
\begin{equation}\label{x5b5}
\begin{cases}
  \mu\Delta u_t+(\lambda+\mu)\nabla\div u_t=(\rho\dot{u})_t+\nabla P_t+((\nabla \times H)\times H)_t \,\,\, &\text{in} \,\,\Omega,\\
  u_t\cdot n=0\,\,\,\text{and} \,\,\,\omega_t\times n=0\,\,&\text{on} \,\,\partial\Omega.
\end{cases}	
\end{equation}
Together with \eqref{x5b4} and \eqref{x4bb} yields
\begin{align}\label{x5b6}
\displaystyle  \int_0^T\sigma\|\nabla u_t\|_{H^1}^2dt\leq C.
\end{align}

By Sobolev's inequality, \eqref{udot}, \eqref{x2b1}, \eqref{x4b} and \eqref{x4bb}, we get for any $q\in (3,6)$,
\begin{align}\label{x5b7}
\displaystyle \|\na(\n\dot u)\|_{L^q}
&\le C \|\na \n\|_{L^q}(\|\nabla\dot{u}\|_{L^q}+\|\nabla\dot{u}\|_{L^2}+\|\nabla u\|_{L^2}^2)+C\|\na\dot u \|_{L^q}\nonumber\\
&\le C\sigma^{-\frac{1}{2}}+C\|\nabla u\|_{H^2}+C\sigma^{-\frac{1}{2}}(\sigma\|\nabla u_t\|_{H^1}^2)^{\frac{3(q-2)}{4q}}+C.
\end{align}
Integrating this inequality over $[0,T],$ by \eqref{x1b2} and \eqref{x5b6}, we have
\begin{align}\label{x5b8}
\displaystyle  \int_0^T\|\nabla(\rho\dot{u})\|_{L^q}^{p_0}dt\leq C .
\end{align}

On the other hand, \eqref{x4b} gives
\begin{align}\label{x5b9}
\displaystyle (\|\na^2 P\|_{L^q})_t & \le C \|\na u\|_{L^\infty} \|\na^2 P\|_{L^q}   +C  \|\na^2 u\|_{W^{1,q}}   \nonumber \\
& \le C (1+\|\na u\|_{L^\infty} )\|\na^2 P\|_{L^q}+C(1+ \|\na  u_t\|_{L^2})+ C\| \na(\n \dot u )\|_{L^{q}},
\end{align}
where in the last inequality we have used the  following simple fact that
\begin{align}\label{x5b10}
\displaystyle \|\na^2 u\|_{W^{1,q}}
 &\le C(1 + \|\na  u_t\|_{L^2}+ \| \na(\n\dot u )\|_{L^{q}}+\|\na^2  P\|_{L^{q}}),
\end{align}
due to \eqref{2tdu}, \eqref{3tdu}, \eqref{x1b2} and \eqref{x4b}.

Hence, applying Gronwall's inequality in \eqref{x5b9}, we deduce from \eqref{x2b1}, \eqref{x3b}  and \eqref{x5b8} that
\begin{align}\label{x5b11}
\displaystyle  \sup_{t\in[0,T]}\|\nabla^{2}P\|_{L^q}\leq C ,
\end{align}
which along with \eqref{x3b}, \eqref{x4b}, \eqref{x5b10} and \eqref{x5b8} also gives
\begin{align}\label{x5b12}
\displaystyle  \sup_{t\in[0,T]}\|P-\bar{P}\|_{W^{2,q}}+\int_0^T\|\nabla^{2}u\|_{W^{1,q}}^{p_0}dt\leq C .
\end{align}
Similarly, one has
\begin{align}
\displaystyle \sup\limits_{0\le t\le T}\|
\n-\bar{\rho}\|_{W^{2,q}} \le
 C,
\end{align}
which together with \eqref{x5b12}  gives \eqref{x5b}. The proof of Lemma \ref{lem-x5} is finished.
\end{proof}

\begin{lemma}\label{lem-x6}
There exists a positive constant $C$ such that
\begin{align}\label{x6b}
\displaystyle & \sup_{0\le t\le T}\sigma\left(\|\rho^{1/2} u_{tt}\|_{L^2}+\|H_{tt}\|_{L^2}+\|\nabla u_t\|_{H^1}+\|\nabla H_t\|_{H^1}+\|\nabla^2 H\|_{H^2}+\|\na u\|_{W^{2,q}}\right)\nonumber \\
& +\int_{0}^T\sigma^2(\|\nabla u_{tt}\|_{2}^2+\|\nabla H_{tt}\|_{2}^2)dt\le C ,
\end{align}
for any $q\in (3,6)$.
\end{lemma}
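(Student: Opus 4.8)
The plan is to reach the top order by differentiating the once-differentiated equations \eqref{utt} and \eqref{htt} one more time in $t$ and testing against the time-weighted second time derivatives. Concretely, I would differentiate \eqref{utt} in $t$ to obtain the equation for $u_{ttt}$,
\[
\rho u_{ttt}+\rho_t u_{tt}-(\lambda+2\mu)\nabla\div u_{tt}+\mu\nabla\times\omega_{tt}=\big[-\nabla P_t-\rho_t u_t-(\rho u\cdot\nabla u)_t+(H\cdot\nabla H-\nabla|H|^2/2)_t\big]_t,
\]
multiply by $\sigma^2 u_{tt}$ and integrate over $\Omega$; symmetrically, differentiate \eqref{htt} in $t$ and test with $\sigma^2 H_{tt}$. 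Since $u\cdot n=0$ and $\curl u\times n=0$ hold for all $t$, we have $u_{tt}\cdot n=0$, $\omega_{tt}\times n=0$ and likewise $H_{tt}\cdot n=0$, $\curl H_{tt}\times n=0$ on $\partial\Omega$, so every boundary contribution from the integrations by parts in the $\nabla\div$, $\nabla\times\omega$ and $\nabla\times\curl$ terms vanishes, exactly as in the treatment of $I_2$ and $I_3$. The leading terms then assemble into $\frac{d}{dt}\big(\tfrac{\sigma^2}{2}(\|\rho^{1/2}u_{tt}\|_{L^2}^2+\|H_{tt}\|_{L^2}^2)\big)$ together with the good dissipation $\sigma^2(\|\nabla u_{tt}\|_{L^2}^2+\|\nabla H_{tt}\|_{L^2}^2)$.

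The bulk of the work is to bound the right-hand sides. The term $\int\sigma^2\rho_t|u_{tt}|^2$ I would rewrite via $\rho_t=-\div(\rho u)$ and integrate by parts into $\int\sigma^2\rho\,u\cdot\nabla u_{tt}\cdot u_{tt}$, absorbed by the dissipation after using $\|u\|_{L^\infty}\le C$ from \eqref{x2b1} and Young's inequality; the factor term $\int\sigma\sigma'\rho|u_{tt}|^2$ from differentiating $\sigma^2$ is already integrable by \eqref{x4bb}. The pressure contribution $\int\sigma^2\nabla P_{tt}\cdot u_{tt}=-\int\sigma^2 P_{tt}\div u_{tt}$ (no boundary term) is controlled by $\delta\sigma^2\|\nabla u_{tt}\|_{L^2}^2+C\sigma^2\|P_{tt}\|_{L^2}^2$, with $\int_0^T\sigma^2\|P_{tt}\|_{L^2}^2\le C$ by \eqref{x4b}. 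The delicate pieces are the twice-differentiated couplings $(\rho u\cdot\nabla u)_{tt}$, $(H\cdot\nabla H-\nabla|H|^2/2)_{tt}$ and $(H\cdot\nabla u-u\cdot\nabla H-H\div u)_{tt}$: after expansion each produces products containing one of $u_{tt},H_{tt},\rho_{tt}$ against first-order quantities, plus crossed terms of two first time derivatives. I would control these by H\"older's inequality, the Gagliardo--Nirenberg and trace estimates of Lemmas \ref{lem-gn} and \ref{lem-f-td}, and Young's inequality, throwing top derivatives onto $\sigma^2\|\nabla u_{tt}\|_{L^2}^2$, $\sigma^2\|\nabla H_{tt}\|_{L^2}^2$ and onto $\|\rho^{1/2}u_{tt}\|_{L^2}^2$, with all remaining factors bounded by \eqref{x1b1}--\eqref{x5bb} and $\int_0^T\|\rho_{tt}\|_{L^2}^2\le C$ from \eqref{x4b}. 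A Gronwall argument then yields
\[
\sup_{0\le t\le T}\sigma^2\big(\|\rho^{1/2}u_{tt}\|_{L^2}^2+\|H_{tt}\|_{L^2}^2\big)+\int_0^T\sigma^2\big(\|\nabla u_{tt}\|_{L^2}^2+\|\nabla H_{tt}\|_{L^2}^2\big)dt\le C,
\]
which is the first two families of bounds in \eqref{x6b}.

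The remaining quantities I would recover by elliptic/curl regularity and bootstrapping. Viewing \eqref{utt} as the Lam\'e system \eqref{x5b5} for $u_t$, the estimate \eqref{x5b4} gives $\|\nabla^2 u_t\|_{L^2}\le C\|\rho^{1/2}u_{tt}\|_{L^2}+C\|\nabla u_t\|_{L^2}+C\|\nabla H_t\|_{L^2}+C$; multiplying by $\sigma$ and using the bound just proved together with \eqref{x4bb} yields $\sup\sigma\|\nabla u_t\|_{H^1}\le C$. Treating \eqref{htt} as the curl system $-\nu\nabla\times\curl H_t=-H_{tt}+(H\cdot\nabla u-u\cdot\nabla H-H\div u)_t$ with $H_t\cdot n=0$, $\curl H_t\times n=0$, Lemma \ref{lem-curl} (as in \eqref{tdhk}) bounds $\|\nabla^2 H_t\|_{L^2}$ by $\|H_{tt}\|_{L^2}$ plus lower-order nonlinearities, giving $\sup\sigma\|\nabla H_t\|_{H^1}\le C$. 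Then $\sup\sigma\|\nabla^2 H\|_{H^2}$ follows by iterating the higher curl estimates \eqref{2tdh}--\eqref{3tdh}, expressing $\curl^2 H$ through \eqref{CMHD1}$_3$ in terms of $H_t$ and nonlinearities and reinserting the new $H_t$ and $\nabla^2 H$ controls, while $\sup\sigma\|\nabla u\|_{W^{2,q}}$ follows from \eqref{x5b10} and the $L^q$-estimate \eqref{x5b7} for $\nabla(\rho\dot u)$, now upgraded through the bound on $\sigma\|\nabla u_t\|_{H^1}$ and the $\|\nabla^2P\|_{L^q}$ control in \eqref{x5b}.

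The main obstacle is the estimation of the twice time-differentiated coupling terms on the right-hand sides of the $u_{tt}$- and $H_{tt}$-equations, where second time derivatives of the strongly coupled nonlinearities must be kept under the dissipation without destroying the $\sigma$-weight near $t=0$. The possible vacuum forces additional care: only $\|\rho^{1/2}u_{tt}\|_{L^2}$ rather than $\|u_{tt}\|_{L^2}$ is at hand, so each bare occurrence of $u_{tt}$ must either be paired with a factor $\rho^{1/2}$ or estimated through $\|u_{tt}\|_{L^6}\le C\|\nabla u_{tt}\|_{L^2}$ by Poincar\'e together with $u_{tt}\cdot n=0$; and the crossed magnetic--velocity contributions must be balanced so that the small factors embedded in the prior estimates are enough to close the Gronwall loop.
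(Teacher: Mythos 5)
Your proposal is correct and follows essentially the same route as the paper: differentiate the momentum and magnetic equations twice in time, test against the time-weighted second time derivatives (the paper tests with $2u_{tt}$, $2H_{tt}$ and multiplies the resulting differential inequality by $\sigma^2$, which is equivalent to your weighted test functions), use the boundary conditions $u_{tt}\cdot n=0$, $\omega_{tt}\times n=0$, $H_{tt}\cdot n=0$, $\curl H_{tt}\times n=0$ together with Lemma \ref{lem-vn} to recover the full gradient dissipation, estimate the expanded nonlinearities exactly as in $R_1$--$R_6$, and close by Gronwall using \eqref{x3b}, \eqref{x4b}, \eqref{x4bb}. The recovery of $\sigma\|\nabla u_t\|_{H^1}$, $\sigma\|\nabla H_t\|_{H^1}$, $\sigma\|\nabla^2H\|_{H^2}$ and $\sigma\|\nabla u\|_{W^{2,q}}$ via the Lam\'e system \eqref{x5b5}, the curl estimates \eqref{2tdh}--\eqref{3tdh}, and \eqref{x5b7}, \eqref{x5b10} also matches the paper's \eqref{x6b6}--\eqref{x6b7}.
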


\begin{proof} Differentiating $\eqref{CMHD}_{2,3}$ with respect to $t$ twice,
multiplying them by $2u_{tt}$ and $2H_{tt}$ respectively, and integrating over $\Omega$ lead to
\begin{align}\label{x6b2}
&\quad \frac{d}{dt}\int(\rho |u_{tt}|^2+|H_{tt}|^2)dx \nonumber \\
 &\quad +2(\lambda+2\mu)\int(\div u_{tt})^2dx+2\mu\int|\omega_{tt}|^2dx+2\nu\int|\curl H_{tt}|^2dx \nonumber \\
&=-8\int_{ }  \n u^i_{tt} u\cdot\na
 u^i_{tt} dx-2\int_{ }(\n u)_t\cdot \left[\na (u_t\cdot u_{tt})+2\na
u_t\cdot u_{tt}\right]dx \nonumber \\
&\quad -2\int_{}(\n_{tt}u+2\n_tu_t)\cdot\na u\cdot u_{tt}dx-2\int (\n
u_{tt}\cdot\na u\cdot  u_{tt}-P_{tt}{\rm div}u_{tt})dx \nonumber \\
&\quad -2\int (H \cdot \nabla H-\nabla |H|^2/2)_{tt}u_{tt}dx+2 \int(H \cdot \nabla u- u \cdot \nabla H- H \div u)_{tt}H_{tt}dx \nonumber \\
&\triangleq\sum_{i=1}^6 R_i.
\end{align}
Let us estimate $R_i$ for $i=1,\cdots,6$. H\"{o}lder's inequality and \eqref{x2b1} give
\begin{align}\label{x6r1}
\displaystyle  R_1 &\le
C\|\sqrt{\rho}u_{tt}\|_{L^2}\|\na u_{tt}\|_{L^2}\| u \|_{L^\infty}
\le \de \|\na u_{tt}\|_{L^2}^2+C(\de)\|\sqrt{\rho}u_{tt}\|^2_{L^2} .
\end{align}
By \eqref{x1b2}, \eqref{x3b}, \eqref{x4b} and \eqref{x4bb}, we conclude that
\begin{align}
 R_2
&\le \de \|\na u_{tt}\|_{L^2}^2+C(\de)\|\nabla u_t\|_{L^2}^3+C(\de)\|\nabla u_t\|_{L^2}^2,\label{x6r2}\\
 R_3 
&\le \de \|\na u_{tt}\|_{L^2}^2+C(\de)\|\n_{tt}\|_{L^2}^2+C(\de)\|\nabla u_t\|_{L^2}^2,\label{x6r3}\\
 R_4 
&\le \de \|\na u_{tt}\|_{L^2}^2+C(\de)\|\sqrt{\rho}u_{tt}\|^2_{L^2}
+C(\de)\|P_{tt}\|^2_{L^2},\label{x6r4}\\
 R_5 
&\le \de \|\na u_{tt}\|_{L^2}^2+C(\de)\|H_{tt}\|^2_{L^2}+C(\de)\|\nabla H_{t}\|^3_{L^2},\label{x6r5}\\
 R_6 
 &\le \de (\|\na u_{tt}\|_{L^2}^2+\|\na H_{tt}\|_{L^2}^2)+C(\de)\|H_{tt}\|^2_{L^2} \nonumber \\
 & \quad +C(\de)(\|\nabla u_{t}\|_{L^2}\|\nabla H_{t}\|^2_{L^2}+\|\nabla u_{t}\|^2_{L^2}\|\nabla H_{t}\|^2_{L^2}),\label{x6r6}
\end{align}

Substituting these estimates of $R_i(i=1,\cdots,6)$ into \eqref{x6b2}, utilizing the fact that
\begin{align}\label{x6b3}
\displaystyle  \|\nabla u_{tt}\|_{L^2}\leq C(\|\div u_{tt}\|_{L^2}+\|\omega_{tt}\|_{L^2}), \quad \|\nabla H_{tt}\|_{L^2}\leq C\|\curl H_{tt}\|_{L^2},
\end{align}
due to Lemma \ref{lem-vn} since $u_{tt}\cdot n=0, H_{tt}\cdot n=0,$ on $\partial\Omega,$ and then choosing $\de$ small enough, we can get
\begin{align}\label{x6b4}
&\frac{d}{dt}(\|\sqrt{\rho}u_{tt}\|^2_{L^2}+\|H_{tt}\|^2_{L^2})+\|\na u_{tt}\|_{L^2}^2+\|\na H_{tt}\|_{L^2}^2 \nonumber \\
&\le C (\|\sqrt{\rho}u_{tt}\|^2_{L^2}+\|H_{tt}\|^2_{L^2}+\|\rho_{tt}\|^2_{L^2}+\|P_{tt}\|^2_{L^2}+\|\nabla u_{t}\|^3_{L^2}+\|\nabla H_{t}\|^3_{L^2}) \nonumber \\
& \quad+C(\|\nabla u_{t}\|_{L^2}\|\nabla H_{t}\|^2_{L^2}+\|\nabla u_{t}\|^2_{L^2}\|\nabla H_{t}\|^2_{L^2}),
\end{align}
which together with \eqref{x4b}, \eqref{x4bb}, and by Gronwall's inequality yields that
\begin{align}\label{x6b5}
&\quad\sup_{0\leq t\leq T}\sigma^2(\|\sqrt{\rho}u_{tt}\|^2_{L^2}+\|H_{tt}\|^2_{L^2})+\int_0^T\sigma^2(\|\na u_{tt}\|_{L^2}^2+\|\na H_{tt}\|_{L^2}^2)dt \nonumber \\
&\le C \int_0^{\sigma(T)} \sigma(\|\sqrt{\rho}u_{tt}\|^2_{L^2}+\|H_{tt}\|^2_{L^2})dt+\int_0^T \sigma^2(\|\rho_{tt}\|^2_{L^2}+\|P_{tt}\|^2_{L^2}+\|\nabla u_{t}\|^2_{L^2})dt \nonumber \\
&\quad +\int_0^T \sigma^2(\|\nabla u_{t}\|^3_{L^2}+\|\nabla H_{t}\|^3_{L^2}+\|\nabla u_{t}\|_{L^2}\|\nabla H_{t}\|^2_{L^2}+\|\nabla u_{t}\|^2_{L^2}\|\nabla H_{t}\|^2_{L^2})dt \nonumber \\
 &\leq C.
\end{align}
Furthermore, it follows from \eqref{3tdh}, \eqref{3tdu}, \eqref{x5b4} and \eqref{x4bb} that
\begin{align}\label{x6b6}
\displaystyle &\quad \sup_{0\le t\le T}(\sigma\|\nabla^2 u_t\|_{L^2}+\sigma\|\nabla^2 H_t\|_{L^2}) \nonumber \\
 &\leq C \sigma(1+\|\rho^{1/2}u_{tt}\|_{L^2}+\|H_{tt}\|_{L^2}+\|\nabla u_t\|_{L^2}+\|\nabla H_t\|_{L^2}) \leq C.
\end{align}
Finally, we deduce from \eqref{x4bb}, \eqref{x5b}, \eqref{x5bb}, \eqref{x5h1}, \eqref{x5b7}, \eqref{x5b10}, \eqref{x6b5} and \eqref{x6b6} that
\begin{align}\label{x6b7}
&\displaystyle \quad \sigma\|\na^2 u\|_{W^{1,q}}
 \le C\sigma (1+\|\na  u_t\|_{L^2}+\|\na  H_t\|_{L^2}+\| \na(\n
\dot u )\|_{L^{q}}+\|\na^2  P\|_{L^{q}})\nonumber \\
& \le C(1+ \sigma\|\na u\|_{H^2}+\sigma^{\frac{1}{2}}(\sigma\|\na u_t\|_{H^1}^2)^{\frac{3(q-2)}{4q}})
\le C+C\sigma^{\frac{1}{2}}(\sigma^{-1})^{\frac{3(q-2)}{4q}} \le C ,
\end{align}
and
\begin{align}
\displaystyle  \sigma \|\nabla^2 H\|_{H^2}\leq C \sigma(1+\|\nabla H_t\|_{H^1}+\|\nabla u\|_{H^2}\|\nabla H\|_{H^2})\leq C,
\end{align}
together with \eqref{x6b5} and \eqref{x6b6} yields \eqref{x6b} and this completes the proof of Lemma \ref{lem-x6}.
\end{proof}

\section{Proof of  Theorem  \ref{th1}}\label{se5}

With all the a priori estimates in Section \ref{se3} and Section \ref{se4} at hand, we are going to  prove the main result of the paper in this section.

{\it Proof of Theorem \ref{th1}.}
 By Lemma \ref{lem-local}, there exists a $T_*>0$ such that the  system \eqref{CMHD}-\eqref{boundary} has a unique classical solution $(\rho,u,H)$ on $\Omega\times
(0,T_*]$. One may use the a priori estimates, Proposition \ref{pr1} and Lemmas \ref{lem-x3}-\ref{lem-x6} to extend the classical
solution $(\rho,u,H)$ globally in time.

First, by the definition of \eqref{As1}-\eqref{As5}, the assumption of the initial data \eqref{dt2} and \eqref{ba35}, one immediately checks that
\begin{align}\label{pf1}
\displaystyle  0\leq\rho_0\leq \hat{\rho},\,\, A_1(0)+A_2(0)=0, \,\,  A_3(0)\leq C_0^{\delta_0},\,\,A_4(0)+A_5(0)\leq C_0^{\delta_0}.
\end{align}
Therefore, there exists a $T_1\in(0,T_*]$ such that
\begin{equation}\label{pf2}
\begin{cases}
0\leq\rho_0\leq2\hat{\rho}, \,\, A_1(T)+A_2(T)\leq 2C_0^{\frac{1}{2}}, \\
A_3(T)\leq 2C_0^{\delta_0}, \,\, A_4(\sigma(T))+A_5(\sigma(T))\leq 2C_0^{\delta_0},
\end{cases}	
\end{equation}
hold for $T=T_1.$
Next, we set
\begin{align}\label{pf3}
\displaystyle  T^*=\sup\{T\,|\,{\rm \eqref{pf2} \ holds}\}.
\end{align}
Then $T^*\geq T_1>0$. Hence, for any $0<\tau<T\leq T^*$
with $T$ finite, it follows from Lemmas \ref{lem-x1}-\ref{lem-x6}
that
\begin{equation}\label{pf4}
\begin{cases}
\rho-\bar{\rho} \in C([0,T]; H^2 \cap W^{2,q}), \\
(u, H) \in C([\tau ,T]; H^2),\quad ( \nabla u_t, \nabla H_t) \in C([\tau ,T]; L^q);
\end{cases}	
\end{equation}
where one has taken advantage of the standard embedding
$$L^\infty(\tau ,T;H^1)\cap H^1(\tau ,T;H^{-1})\hookrightarrow
C\left([\tau ,T];L^q\right),\quad\mbox{ for any } q\in [2,6).  $$
Due to \eqref{x3b}, \eqref{x4bb}, \eqref{x6b} and $\eqref{CMHD}_1$,
we obtain
\begin{align*}
&\quad\int_{\tau}^T \left|\left(\int\n|u_t|^2dx\right)_t\right|dt
\le\int_{\tau}^T\left(\|  \n_t  |u_t|^2 \|_{L^1}+2\|  \n  u_t\cdot u_{tt} \|_{L^1}\right)dt\\
&\le C\int_{\tau}^T\left( \| \n^{\frac{1}{2}} |u_t|^2 \|_{L^2}\|\na u\|_{L^\infty}+\|  u\|_{L^6}\|\na\n\|_{L^2} \|u_t  \|^2_{L^6}+  \|\rho^{\frac{1}{2}}u_{tt} \|_{L^2}\right)dt\le C,
\end{align*}
which together with \eqref{pf4} yields
\begin{align}\label{pf5}
\displaystyle  \rho^{1/2}u_t, \quad\rho^{1/2}\dot u \in C([\tau,T];L^2).
\end{align}
Finally, we claim that
\begin{align}\label{pf6}
 \displaystyle  T^*=\infty.
 \end{align}
Otherwise, $T^*<\infty$. Then by Proposition \ref{pr1}, it holds that
\begin{equation}\label{pf7}
\begin{cases}
0\leq\rho\leq\frac{7}{4}\hat{\rho},\,\,A_1(T^*)+A_2(T^*)\leq C_0^{\frac{1}{2}},\\
A_3(T^*)\leq C_0^{\delta_0}, \,\, A_4(\sigma(T^*))+A_5(\sigma(T^*))\leq C_0^{\delta_0},	
\end{cases}	
\end{equation}
It follows from Lemmas \ref{lem-x5}, \ref{lem-x6} and \eqref{pf5} that $(\rho(x,T^*),u(x,T^*), H(x,T^*))$ satisfies the initial data condition \eqref{dt1}-\eqref{dt2}, \eqref{dt3}, where  $g(x)\triangleq\sqrt{\rho}\dot u(x, T^*),\,\,x\in \Omega.$
Thus, Lemma \ref{lem-local} implies that there exists some $T^{**}>T^*$ such that \eqref{pf2} holds for $T=T^{**}$, which contradicts the definition of $ T^*.$ As a result, \eqref{pf6} holds.
By Lemmas \ref{lem-local} and \ref{lem-x1}-\ref{lem-x6}, it indicates that $(\rho,u,H)$ is in fact the unique classical solution defined on $\Omega\times(0,T]$ for any  $0<T<T^*=\infty.$

Finally, with \eqref{m2}, \eqref{tdu1}, \eqref{tdh-2}, \eqref{grho}, \eqref{p-time}, \eqref{tdh-3} and \eqref{I01} at hand, \eqref{esti-t}  can be obtained in similar arguments as used in \cite{cl2019}, and we omit the details.
The proof of Theorem \ref{th1} is finished.   \endproof





\appendix
\section{Some basic theories and lemmas}\label{appendix-a}
In this appendix, we review some elementary inequalities and important lemmas that are used extensively in this paper.

First, we recall the well-known Gagliardo-Nirenberg inequality (see \cite{nir}).
\begin{lemma}[Gagliardo-Nirenberg]\label{lem-gn}
Assume that $\Omega$ is a bounded Lipschitz domain in $\r^3$. For  $p\in [2,6],\,q\in(1,\infty), $ and
$ r\in  (3,\infty),$ there exist two generic
 constants
$C_1,\,\,C_2>0$ which may depend  on $p$, $q$ and $r$ such that for any  $f\in H^1({\O }) $
and $g\in  L^q(\O )\cap D^{1,r}(\O), $
\be\label{g1}\|f\|_{L^p(\O)}\le C_1 \|f\|_{L^2}^{\frac{6-p}{2p}}\|\na
f\|_{L^2}^{\frac{3p-6}{2p}}+C_2\|f\|_{L^2} ,\ee
\be\label{g2}\|g\|_{C\left(\ol{\O }\right)} \le C_1
\|g\|_{L^q}^{q(r-3)/(3r+q(r-3))}\|\na g\|_{L^r}^{3r/(3r+q(r-3))} + C_2\|g\|_{L^2}.
\ee
Moreover, if $f\cdot n|_{\partial\Omega}=0$ ,$g\cdot n|_{\partial\Omega}=0$ , then the constant $C_2=0.$
\end{lemma}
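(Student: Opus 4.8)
The plan is to reduce both inequalities to their counterparts on the whole space $\mathbb{R}^3$, where they are the classical Gagliardo--Nirenberg interpolation inequalities, and then to remove the lower-order correction term under the boundary condition by a Poincar\'e-type estimate. The two exponents in the statement are precisely the scale-invariant ones: a dilation check on $\mathbb{R}^3$ forces, for \eqref{g1}, the interpolation parameter $\theta=\frac{3p-6}{2p}$ with $1-\theta=\frac{6-p}{2p}$, and, for \eqref{g2}, $\theta=\frac{3r}{3r+q(r-3)}$ with $1-\theta=\frac{q(r-3)}{3r+q(r-3)}$. Hence it suffices to transplant the homogeneous $\mathbb{R}^3$ inequalities to $\Omega$ and to keep track of the lower-order error produced by the transplantation.

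First I would fix a Stein (or Calder\'on) extension operator $E\colon W^{k,p}(\Omega)\to W^{k,p}(\mathbb{R}^3)$, which exists because a bounded Lipschitz domain is a universal extension domain and which is bounded simultaneously on $L^2$, $H^1$, $L^q$ and $W^{1,r}$. For \eqref{g1} I apply the classical estimate $\|Ef\|_{L^p(\mathbb{R}^3)}\le C\|Ef\|_{L^2(\mathbb{R}^3)}^{1-\theta}\|\nabla Ef\|_{L^2(\mathbb{R}^3)}^{\theta}$, restrict to $\Omega$, and bound $\|Ef\|_{L^2}\le C\|f\|_{L^2}$ and $\|\nabla Ef\|_{L^2}\le C\|f\|_{H^1}$. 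Using $(\|f\|_{L^2}^2+\|\nabla f\|_{L^2}^2)^{\theta/2}\le C(\|f\|_{L^2}^{\theta}+\|\nabla f\|_{L^2}^{\theta})$ together with $\|f\|_{L^2}^{1-\theta}\|f\|_{L^2}^{\theta}=\|f\|_{L^2}$ yields exactly the additive form
\[
\|f\|_{L^p}\le C\|f\|_{L^2}^{1-\theta}\|\nabla f\|_{L^2}^{\theta}+C\|f\|_{L^2},
\]
which is \eqref{g1}. Inequality \eqref{g2} is obtained the same way from the Morrey-type Gagliardo--Nirenberg inequality $\|w\|_{L^\infty(\mathbb{R}^3)}\le C\|w\|_{L^q}^{1-\theta}\|\nabla w\|_{L^r}^{\theta}$ (valid since $r>3$) applied to $w=Eg$, using $W^{1,r}(\Omega)\hookrightarrow C(\bar\Omega)$ for the continuity of $g$ and the boundedness of $E$; the transplantation again produces a lower-order term, which I would reduce to $C\|g\|_{L^2}$ by interpolating the intermediate $L^r$ norm between $L^q$ and $L^\infty$.

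Finally, for the ``moreover'' statement I would delete the term $C_2\|f\|_{L^2}$ by means of the Poincar\'e-type inequality $\|f\|_{L^2}\le C\|\nabla f\|_{L^2}$, valid for vector fields with $f\cdot n=0$ on $\partial\Omega$. Its proof is a standard Rellich compactness argument: a normalized sequence with vanishing gradient converges in $L^2$ to a constant field $c$ with $c\cdot n=0$ on $\partial\Omega$, and the identity $|c|^2|\Omega|=\int_{\partial\Omega}(c\cdot n)(c\cdot x)\,dS=0$ forces $c=0$, a contradiction. Inserting $\|f\|_{L^2}\le C\|\nabla f\|_{L^2}$ into the additive form and writing $\|f\|_{L^2}=\|f\|_{L^2}^{1-\theta}\|f\|_{L^2}^{\theta}\le C\|f\|_{L^2}^{1-\theta}\|\nabla f\|_{L^2}^{\theta}$ collapses the two terms into the single homogeneous product, giving \eqref{g1} with $C_2=0$; the same substitution, combined with $\|\nabla g\|_{L^2}\le C\|\nabla g\|_{L^r}$, handles \eqref{g2}. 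The main obstacle I anticipate is not the $\mathbb{R}^3$ inequalities themselves but the clean bookkeeping of the transplantation error in \eqref{g2} and its absorption when $q<2$, where $\|g\|_{L^2}$ is not directly dominated by $\|g\|_{L^q}$ and one must interpolate $L^2$ between $L^q$ and $L^\infty$ and then close the estimate with Young's inequality.
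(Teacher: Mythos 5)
Your proposal is correct, but note that the paper does not actually prove this lemma: it simply records it as ``the well-known Gagliardo--Nirenberg inequality'' with a citation to Nirenberg's paper, so there is no in-paper argument to compare against. What you supply is the standard self-contained route: a Stein extension to reduce to the homogeneous inequalities on $\r^3$ (with the scale-forced exponents, which you verify correctly for both \eqref{g1} and \eqref{g2}), the splitting $\|f\|_{H^1}^{\theta}\le C(\|f\|_{L^2}^{\theta}+\|\na f\|_{L^2}^{\theta})$ to produce the additive lower-order term, and a compactness Poincar\'e inequality for fields with vanishing normal trace to kill $C_2$; your divergence identity $|c|^2|\O|=\int_{\partial\O}(c\cdot x)(c\cdot n)\,dS$ is exactly the right way to rule out a nonzero constant limit on a bounded Lipschitz domain. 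The two points that genuinely require care are the ones you already flag: in \eqref{g2} the transplantation error $\|g\|_{L^{r}}$ must be interpolated between $L^{q}$ and $L^{\infty}$ and absorbed by Young's inequality to land on the stated $C_2\|g\|_{L^2}$ (and, when $q<2$, the residual $\|g\|_{L^q}$ term must be run through the Poincar\'e inequality once more, since $\|g\|_{L^2}\le C\|g\|_{L^q}$ fails there); both closures work as you sketch them. In short, your argument is sound and fills in a proof the paper leaves to the literature.
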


In order to get the uniform (in time) upper bound of the density $\n,$ we need the following Zlotnik  inequality in \cite{z2000}.
\begin{lemma}\label{lem-z}
Suppose the function $y$ satisfy
\bnn y'(t)= g(y)+b'(t) \mbox{  on  } [0,T] ,\quad y(0)=y^0, \enn
with $ g\in C(R)$ and $y, b\in W^{1,1}(0,T).$ If $g(\infty)=-\infty$
and \be\label{a100} b(t_2) -b(t_1) \le N_0 +N_1(t_2-t_1)\ee for all
$0\le t_1<t_2\le T$
  with some $N_0\ge 0$ and $N_1\ge 0,$ then
\bnn y(t)\le \max\left\{y^0,\overline{\zeta} \right\}+N_0<\infty
\mbox{ on
 } [0,T],
\enn
where $\overline{\zeta} $ is a constant such
that \be\label{a101} g(\zeta)\le -N_1 \quad\mbox{ for }\quad \zeta\ge \overline{\zeta}.\ee
\end{lemma}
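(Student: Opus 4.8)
The plan is to prove the pointwise bound by isolating the mechanism that forces $y$ to decrease whenever it becomes large. Since $y,b\in W^{1,1}(0,T)$ are absolutely continuous, $y$ is continuous and the fundamental theorem of calculus applies, so for any $0\le t_1\le t_2\le T$ one has $y(t_2)-y(t_1)=\int_{t_1}^{t_2}\big(g(y(s))+b'(s)\big)\,ds$; only this integrated form of the equation will ever be used, so the mere almost-everywhere differentiability furnished by $W^{1,1}$ suffices. First I would record the key one-sided estimate: on any subinterval $[t_1,t_2]$ along which $y(s)>\overline{\zeta}$ for every $s$, the defining property $g(\zeta)\le -N_1$ for $\zeta\ge\overline{\zeta}$ yields $g(y(s))\le -N_1$, whence
\[ y(t_2)-y(t_1)\le -N_1(t_2-t_1)+\big(b(t_2)-b(t_1)\big)\le N_0, \]
after invoking the hypothesis $b(t_2)-b(t_1)\le N_0+N_1(t_2-t_1)$. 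In words, so long as $y$ stays strictly above the threshold $\overline{\zeta}$, its total increase cannot exceed $N_0$.

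With this estimate in hand, I would fix $t\in[0,T]$ and distinguish three cases. If $y(s)>\overline{\zeta}$ for all $s\in[0,t]$, then applying the estimate on the whole interval $[0,t]$ gives $y(t)\le y^0+N_0$. If $y(t)\le\overline{\zeta}$, the conclusion is immediate since $\overline{\zeta}\le\max\{y^0,\overline{\zeta}\}$. The remaining case is that $y(t)>\overline{\zeta}$ while $y(s)\le\overline{\zeta}$ for some $s\in[0,t]$; here I would set $t_*=\sup\{s\in[0,t]:y(s)\le\overline{\zeta}\}$ and use the continuity of $y$ to conclude that $t_*<t$, that $y(t_*)=\overline{\zeta}$, and that $y(s)>\overline{\zeta}$ on $(t_*,t]$. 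Applying the key estimate on $[t_*,t]$ then gives $y(t)\le y(t_*)+N_0=\overline{\zeta}+N_0$. Combining the three cases produces $y(t)\le\max\{y^0,\overline{\zeta}\}+N_0$ for every $t$, which is the assertion.

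It is worth flagging where the structural hypotheses enter: $g(\infty)=-\infty$ is exactly what guarantees the existence of a threshold $\overline{\zeta}$ beyond which $g\le -N_1$, so that the decay estimate has a regime on which to operate, while the one-sided control on $b$ is precisely tuned so that the $N_1(t_2-t_1)$ terms cancel and only the additive constant $N_0$ survives. The main obstacle is not any computation but the topological bookkeeping of the last case—verifying from continuity that the final crossing time $t_*$ satisfies $y(t_*)=\overline{\zeta}$ and that $y$ indeed remains above $\overline{\zeta}$ on the whole of $(t_*,t]$, since only then may the one-sided estimate be legitimately applied on that interval. This is the one spot requiring genuine care, and I would verify it by choosing a sequence $s_n\uparrow t_*$ with $y(s_n)\le\overline{\zeta}$ to get $y(t_*)\le\overline{\zeta}$, and the one-sided limit on $(t_*,t]$ to get $y(t_*)\ge\overline{\zeta}$.
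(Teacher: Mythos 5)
The paper does not prove this lemma at all: it is quoted verbatim from Zlotnik \cite{z2000} as a black-box tool in Appendix A, so there is no in-paper argument to compare against. Your proof is correct and is the standard argument for this estimate: the reduction to the one-sided bound $y(t_2)-y(t_1)\le N_0$ on intervals where $y\ge\overline{\zeta}$, the three-case split on the last crossing time $t_*$, and the continuity argument showing $y(t_*)=\overline{\zeta}$ are all sound (note that at $s=t_*$ one still has $g(y(t_*))\le -N_1$ since $g(\zeta)\le -N_1$ holds for $\zeta\ge\overline{\zeta}$ including equality, so the key estimate applies legitimately on the closed interval $[t_*,t]$). The only hypothesis you use beyond what you state is that one works with the absolutely continuous representative of $y$, which you correctly flag at the outset.
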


Consider the Lam\'{e}'s system
\be\label{lame1}\begin{cases}
-\mu\Delta u-(\lambda+\mu)\nabla\div u=f \,\, &in~ \Omega, \\
u\cdot n=0\,\,\text{and}\,\,\curl u\times n=0\,\,&on\,\,\partial\Omega,
\end{cases} \ee
Then, the following estimate is standard (see \cite{adn}).
\begin{lemma}  \label{lem-lame}
For the Lam\'{e}'s equation \eqref{lame1}, one has

(1) If $f\in W^{k,q}$ for some $q\in(1,\infty),\,\, k\geq0,$ then there exists a unique solution $u\in W^{k+2,q},$ such that
$$\|u\|_{W^{k+2,q}}\leq C(\|f\|_{W^{k,q}}+\|u\|_{L^q});$$

(2) If $f=\nabla g$ and $g\in W^{k,q}$ for some $q\geq1,\,\,k\geq0,$ then there exists a unique weak solution $u\in W^{k+1,q},$ such that
$$\|u\|_{W^{k+1,q}}\leq C(\|g\|_{W^{k,q}}+\|u\|_{L^q}).$$
\end{lemma}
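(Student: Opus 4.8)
The plan is to regard \eqref{lame1} as an elliptic boundary value problem of Agmon--Douglis--Nirenberg (ADN) type and to invoke the $L^q$ regularity theory of \cite{adn}; the only problem-specific work is to verify that the slip boundary operators $u\mapsto u\cdot n$ and $u\mapsto\curl u\times n$ satisfy the complementing (Lopatinski--Shapiro) condition relative to the Lam\'e operator. First I would record the ellipticity of $Lu\triangleq-\mu\Delta u-(\lambda+\mu)\na\div u$. Its principal symbol is $\mathcal{A}(\xi)=\mu|\xi|^2 I_3+(\lambda+\mu)\xi\otimes\xi$, and since $\mu>0$ together with $2\mu+3\lambda\ge0$ forces $\lambda+2\mu\ge\tfrac{4}{3}\mu>0$, one has $\zeta^*\mathcal{A}(\xi)\zeta=\mu|\xi|^2|\zeta|^2+(\lambda+\mu)|\xi\cdot\zeta|^2>0$ for all $\xi\ne0$, $\zeta\ne0$; hence $L$ is strongly elliptic and of ADN type. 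A short computation also gives $\det\mathcal{A}(\xi)=\mu^2(\lambda+2\mu)|\xi|^6$, which will be used for the root count below.

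Second, and this is the heart of the matter, I would verify the complementing condition for $\{n\cdot u,\ \curl u\times n\}$. This is a pointwise-on-$\partial\Omega$ check: after localizing near a boundary point, flattening $\partial\Omega$ and freezing coefficients, one passes to the half-space model $L_0 u=0$ on $\{x_3>0\}$ with fixed tangential frequency $\xi'\ne0$. From $\det\mathcal{A}(\xi',-i\tau)=\mu^2(\lambda+2\mu)(|\xi'|^2-\tau^2)^3$ the only decaying normal exponent is the triple root $\tau=-|\xi'|$, so the space of solutions of the ODE system decaying as $x_3\to+\infty$ is three-dimensional (spanned by $e^{-|\xi'|x_3}$ and $x_3e^{-|\xi'|x_3}$ modes). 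Correspondingly the homogenized slip conditions at $x_3=0$ amount to exactly three scalar relations, namely $u_3=0$ and the two tangential components of $\curl u\times e_3=0$. I would insert the general decaying solution into these three relations and show that the resulting $3\times3$ determinant does not vanish for any $\xi'\ne0$; this is the step most likely to be delicate, since it is precisely where the algebraic structure of the Navier-type conditions enters.

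With ellipticity and the complementing condition established, the ADN theorem directly yields the a priori estimate $\|u\|_{W^{k+2,q}}\le C(\|f\|_{W^{k,q}}+\|u\|_{L^q})$ of part (1). For existence and uniqueness I would pass to the weak formulation: testing $Lu=f$ against $v$ with $v\cdot n=0$ and integrating by parts (using $-\Delta=-\na\div+\curl\curl$) produces the symmetric bilinear form $a(u,v)=(\lambda+2\mu)\int\div u\,\div v+\mu\int\curl u\cdot\curl v$, in which the condition $\curl u\times n=0$ appears as the natural boundary condition. By Lemma \ref{lem-vn} one has $a(u,u)\ge c\|\na u\|_{L^2}^2$, so $a$ is coercive on $\{v\in H^1:\,v\cdot n=0\}$ modulo the finite-dimensional space of harmonic Neumann fields (which is trivial when $\Omega$ is simply connected). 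Lax--Milgram then provides an $H^1$ weak solution, whose regularity is upgraded by the ADN estimate; the lower-order term $\|u\|_{L^q}$ on the right absorbs the kernel.

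For part (2) the gain of one derivative is forced by the gradient structure $f=\na g$: in the weak formulation $\int\na g\cdot v=-\int g\,\div v$ for $v\cdot n=0$, so the equation is driven by $g$ rather than $\na g$. When $k\ge1$ this is immediate, since $\na g\in W^{k-1,q}$ and part (1) applied with $k-1$ gives $\|u\|_{W^{k+1,q}}\le C(\|\na g\|_{W^{k-1,q}}+\|u\|_{L^q})\le C(\|g\|_{W^{k,q}}+\|u\|_{L^q})$. The borderline case $k=0$, where $g\in L^q$ only and $\na g$ is a mere distribution, needs the refined assertion that the solution operator $g\mapsto\na u$ is bounded on $L^q$; I would obtain this from the harmonic structure of the problem, namely that taking $\div$ and $\curl$ of \eqref{lame1} shows $\div u+g/(\lambda+2\mu)$ and each component of $\curl u$ are harmonic, whence Calder\'on--Zygmund estimates for these harmonic quantities plus Lemma \ref{lem-vn} control $\|\na u\|_{L^q}$ by $\|g\|_{L^q}+\|u\|_{L^q}$; symmetry of $a$ makes a duality argument against the adjoint Lam\'e problem an equivalent route.
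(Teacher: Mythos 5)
Your proposal follows exactly the route the paper takes: the paper gives no proof of Lemma \ref{lem-lame} and simply cites the Agmon--Douglis--Nirenberg theory of \cite{adn}, which is precisely the machinery you set up (strong ellipticity of the Lam\'e symbol, the complementing condition for the slip operators, and the variational form $(\lambda+2\mu)\int\div u\,\div v+\mu\int\curl u\cdot\curl v$ for existence and for the gain of one derivative in part (2)). The only substantive step you defer --- the nonvanishing of the $3\times 3$ Lopatinski--Shapiro determinant for $\{u\cdot n,\ \curl u\times n\}$ at the triple root $\tau=-|\xi'|$ --- is exactly the content the citation is meant to cover, so your attempt is consistent with, and considerably more detailed than, the paper's treatment.
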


The following two lemmas are given in Theorem 3.2 in \cite{vww} and Propositions 2.6-2.9 in \cite{ar2014}.
\begin{lemma}   \label{lem-vn}
Let $k\geq0$ be a integer, $1<q<+\infty$, and assume that $\Omega$ is a simply connected bounded domain in $\r^3$ with $C^{k+1,1}$ boundary $\partial\Omega$. Then for $v\in W^{k+1,q}$ with $v\cdot n=0$ on $\partial\Omega$, it holds that
$$\|v\|_{W^{k+1,q}}\leq C(\|\div v\|_{W^{k,q}}+\|\curl v\|_{W^{k,q}}).$$
In particular, for $k=0$, we have
$$\|\nabla v\|_{L^q}\leq C(\|\div v\|_{L^q}+\|\curl v\|_{L^q}).$$
\end{lemma}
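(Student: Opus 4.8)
The plan is to regard the first-order operator $v\mapsto(\div v,\curl v)$, supplemented by the boundary condition $v\cdot n=0$, as an elliptic boundary value problem in the Agmon--Douglis--Nirenberg (ADN) sense: first I would prove the Calder\'on--Zygmund estimate up to a lower-order remainder, and then absorb that remainder by a compactness argument that relies on the triviality of divergence- and curl-free tangential fields on a simply connected domain.

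The first step is to establish the a priori bound
\begin{equation}\label{vn-lo}
\|v\|_{W^{k+1,q}}\leq C\big(\|\div v\|_{W^{k,q}}+\|\curl v\|_{W^{k,q}}+\|v\|_{L^q}\big).
\end{equation}
The interior contribution rests on the identity $-\Delta v=\na(\div v)-\na\times(\curl v)$: passing to Fourier variables, the full gradient $\na v$ is recovered from $\div v$ and $\curl v$ by zeroth-order singular integral (Riesz-transform) operators, which are bounded on $L^q$ for $1<q<\infty$, and differentiating this representation $k$ times yields the interior form of \eqref{vn-lo}. Near $\partial\Omega$ I would flatten the boundary using the assumed $C^{k+1,1}$ regularity and invoke the ADN estimates for the overdetermined system $(\div,\curl)$ with the scalar boundary condition $v\cdot n=0$; the only nontrivial analytic point is to verify that this boundary operator satisfies the complementing (Lopatinski--Shapiro) condition for the half-space model, a direct symbol computation.

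To remove the term $\|v\|_{L^q}$ I would argue by contradiction. If \eqref{vn-lo} held only with that term, there would exist a sequence $(v_j)$ with $\|v_j\|_{W^{k+1,q}}=1$ and $\|\div v_j\|_{W^{k,q}}+\|\curl v_j\|_{W^{k,q}}\to0$. By the compact Sobolev embedding $W^{k+1,q}\hookrightarrow W^{k,q}$ a subsequence converges strongly in $W^{k,q}$, and feeding this back into \eqref{vn-lo} shows that $(v_j)$ is Cauchy in $W^{k+1,q}$; hence $v_j\to v$ in $W^{k+1,q}$ with $\|v\|_{W^{k+1,q}}=1$, $\div v=0$, $\curl v=0$, and $v\cdot n=0$ on $\partial\Omega$. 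Since $\curl v=0$ on the simply connected domain $\Omega$, one can write $v=\na h$ for a single-valued potential $h$; then $\Delta h=\div v=0$ and $\pa_n h=v\cdot n=0$, so $h$ is constant and $v\equiv0$, contradicting $\|v\|_{W^{k+1,q}}=1$. This forces \eqref{vn-lo} to hold with $\|v\|_{L^q}$ deleted, which is exactly the claimed inequality, the case $k=0$ being a special instance.

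The step I expect to be the main obstacle is the boundary analysis underlying \eqref{vn-lo}: checking the complementing condition for $(\div,\curl)$ paired with $v\cdot n=0$, and organizing the higher-order ($k\geq1$) estimates in tangential and normal directions so that the ADN machinery applies uniformly. By comparison, the interior Riesz-transform estimate and the kernel-triviality argument (where simple connectedness enters decisively, guaranteeing that a curl-free field is a gradient) are comparatively soft.
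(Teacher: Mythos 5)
Your strategy is sound, but note first that the paper does not actually prove this lemma: it is quoted from Theorem 3.2 of \cite{vww} and Propositions 2.6--2.9 of \cite{ar2014}, so there is no in-paper argument to compare against. What you propose is essentially the standard proof carried out in those references: a Calder\'on--Zygmund-type a priori estimate with a lower-order remainder $\|v\|_{L^q}$, followed by a compactness--uniqueness (Peetre-type) argument in which simple connectedness kills the kernel, since a field with $\div v=\curl v=0$ and $v\cdot n=0$ is $\nabla h$ with $h$ harmonic and $\partial_n h=0$, hence zero; that step is correct as written and is exactly where the topological hypothesis enters. Two remarks on the implementation. First, the vector identity should read $-\Delta v=-\nabla(\div v)+\nabla\times(\curl v)$ (the signs on your right-hand side are flipped); this does not affect the Riesz-transform argument. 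Second, and more substantively, verifying the complementing condition directly for the first-order pair $(\div,\curl)$ with the single scalar condition $v\cdot n=0$ places you in the theory of \emph{overdetermined} elliptic boundary value problems (four equations for three unknowns), which the determined-system ADN machinery does not cover as stated; the cleaner and more standard route is to pass to the determined second-order system $-\Delta v=-\nabla(\div v)+\nabla\times(\curl v)$ supplemented by $v\cdot n=0$ and $(\curl v)\times n$ prescribed as boundary data (controlled via the trace theorem by $\|\curl v\|_{W^{k,q}}$ for $k\geq 1$, and by a variational formulation when $k=0$), for which the complementing condition is classical --- this is precisely the slip-type boundary value problem the paper already invokes in Lemma \ref{lem-lame}. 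With that substitution your outline closes.
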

\begin{lemma}   \label{lem-curl}
Let $k\geq0$ be a integer, $1<q<+\infty$. Suppose that $\Omega$ is a bounded domain in $\r^3$ and its $C^{k+1,1}$ boundary $\partial\Omega$ only has a finite number of 2-dimensional connected components. Then for $v\in W^{k+1,q}$ with $v\times n=0$ on $\partial\Omega$, we have
$$\|v\|_{W^{k+1,q}}\leq C(\|\div v\|_{W^{k,q}}+\|\curl v\|_{W^{k,q}}+\|v\|_{L^q}).$$
In particular, if  $\Omega$ has no holes, then
$$\|v\|_{W^{k+1,q}}\leq C(\|\div v\|_{W^{k,q}}+\|\curl v\|_{W^{k,q}}).$$
\end{lemma}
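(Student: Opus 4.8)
The plan is to read Lemma~\ref{lem-curl} as an elliptic a priori estimate for the first–order \emph{div--curl system} $v\mapsto(\div v,\curl v)$ supplemented by the boundary condition $v\times n=0$, i.e.\ the vanishing of the tangential trace of $v$. The fact that the left-hand side gains exactly one derivative over $\div v$ and $\curl v$ on the right is the signature of an \emph{overdetermined first-order elliptic boundary value problem} in the sense of Agmon--Douglis--Nirenberg (ADN). Thus the whole statement should follow once (i) this ellipticity is verified for the specific boundary operator $v\mapsto v\times n$, and (ii) the lower-order term $\|v\|_{L^q}$ is correctly accounted for by the topology of $\O$. This is precisely the program carried out in \cite{vww,ar2014}, so my proof would reconstruct their argument.

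I would first treat the base case $q=2$, $k=0$, which fixes the geometric mechanism transparently. Starting from the pointwise identity $-\Delta v=\curl\curl v-\na\div v$ and integrating by parts, one obtains a Gaffney-type identity
\[
\int_\O|\na v|^2\,dx=\int_\O\big(|\div v|^2+|\curl v|^2\big)\,dx+\int_{\pa\O}\mathcal{K}(v,v)\,dS ,
\]
where $\mathcal{K}$ is a quadratic form built from the second fundamental form of $\pa\O$. Under $v\times n=0$ the tangential part of $v$ vanishes on $\pa\O$, so $|\mathcal{K}(v,v)|\le C|v|^2$ there and the boundary term is bounded by $C\|v\|_{L^2(\pa\O)}^2\le\epsilon\|\na v\|_{L^2}^2+C_\epsilon\|v\|_{L^2}^2$ via the trace inequality; absorbing the $\epsilon$-term gives $\|\na v\|_{L^2}\le C(\|\div v\|_{L^2}+\|\curl v\|_{L^2}+\|v\|_{L^2})$, hence the $k=0$, $q=2$ estimate.

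Next I would pass to general $1<q<\infty$ and $k\ge0$. The essential point is to check that the principal symbol of $(\div,\curl)$ together with the boundary operator $v\mapsto v\times n$ satisfies the complementing (Lopatinskii--Shapiro) condition at each boundary point: after flattening $\pa\O$ and freezing coefficients, the constant-coefficient half-space problem with zero data must admit only the trivial bounded solution. Granting this, a partition of unity combined with the interior and boundary Calder\'on--Zygmund/ADN estimates for the overdetermined elliptic system yields $\|v\|_{W^{k+1,q}}\le C(\|\div v\|_{W^{k,q}}+\|\curl v\|_{W^{k,q}}+\|v\|_{L^q})$ for all such $q,k$. I expect the verification of this complementing condition to be the main obstacle, since it is exactly the step where the tangential condition $v\times n=0$ enters (and where the symbolic computation differs from the normal condition $v\cdot n=0$ used in Lemma~\ref{lem-vn}).

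Finally, for the sharper inequality when $\O$ ``has no holes,'' I would delete the $\|v\|_{L^q}$ term by a standard Peetre--Ehrling compactness argument. The space of fields satisfying $\div v=0$, $\curl v=0$ and $v\times n=0$ is the space of harmonic Dirichlet fields, whose dimension is a topological invariant of $\O$ that vanishes when $\O$ has no holes. If the estimate without $\|v\|_{L^q}$ failed, there would be a sequence $v_j$ with $\|v_j\|_{W^{k+1,q}}=1$ and $\|\div v_j\|_{W^{k,q}}+\|\curl v_j\|_{W^{k,q}}\to0$; the compact embedding $W^{k+1,q}\hookrightarrow\hookrightarrow L^q$ gives (along a subsequence) $v_j\to v$ in $L^q$ with $\div v=\curl v=0$ and $v\times n=0$, so $v$ lies in the trivial kernel and $v=0$. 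But then $\|v_j\|_{L^q}\to0$, and the already-proven estimate forces $\|v_j\|_{W^{k+1,q}}\to0$, contradicting $\|v_j\|_{W^{k+1,q}}=1$. This removes the lower-order term and completes the proof.
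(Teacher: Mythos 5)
The paper does not prove this lemma at all: it is quoted verbatim from the literature (Theorem 3.2 of \cite{vww} and Propositions 2.6--2.9 of \cite{ar2014}), so there is no in-paper argument to compare against. Your proposal is a correct outline of exactly the proof one finds in those references: the Gaffney identity with the curvature boundary term for the $L^2$, $k=0$ case (valid because $v\times n=0$ forces $v=(v\cdot n)n$ on $\partial\Omega$, so the boundary quadratic form is zeroth order in $v$), the ADN machinery for general $q$ and $k$, and a Peetre--Ehrling compactness argument to delete $\|v\|_{L^q}$ when the kernel of harmonic Dirichlet fields is trivial. Two ingredients remain deferred rather than executed: the verification of the Lopatinskii--Shapiro complementing condition for the boundary operator $v\mapsto v\times n$ (which is the actual technical content of von Wahl's theorem; in practice one reduces to the second-order problem $-\Delta v=\curl g-\nabla f$ with the Hodge--Dirichlet conditions $v\times n=0$, $\div v=f$ on $\partial\Omega$ and applies ADN there), and the identification of the kernel $\{\div v=\curl v=0,\ v\times n=0\}$ with a cohomology space whose dimension is the number of boundary components minus one --- this is what makes ``no holes'' the correct triviality hypothesis for the tangential condition, as opposed to simple connectivity for the normal condition in Lemma \ref{lem-vn}. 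In the weak-limit step of your compactness argument you should also pass to a weak limit in $W^{k+1,q}$ (not merely the strong $L^q$ limit) so that the trace condition and the vanishing of $\div$ and $\curl$ survive in the limit; with that minor repair the argument is sound.
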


Next, similar to \cite{bkm,hlx,hl1}, we need a Beale-Kato-Majda type inequality with respect to the slip boundary condition \eqref{navier-b} which is given in \cite{cl2019}.
\begin{lemma}\label{lem-bkm}
For $3<q<\infty$, assume that $u\cdot n=0$ and $\curl u\times n=0$ on $\partial\Omega$, $ u\in W^{2,q}$, then there is a constant  $C=C(q,\Omega)$ such that  the following estimate holds
\bnn\ba
\|\na u\|_{L^\infty}\le C\left(\|{\rm div}u\|_{L^\infty}+\|\curl u\|_{L^\infty} \right)\ln(e+\|\na^2u\|_{L^q})+C\|\na u\|_{L^2} +C .
\ea\enn
\end{lemma}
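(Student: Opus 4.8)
The plan is to deduce the estimate by combining a logarithmic interpolation inequality of Brezis--Gallouet--Wainger type with a $\mathrm{BMO}$ bound for $\nabla u$ expressed through $\div u$ and $\curl u$. Setting $v=\nabla u$, I would first establish
\[
\|v\|_{L^\infty}\le C\big(1+\|v\|_{\mathrm{BMO}}\ln(e+\|v\|_{W^{1,q}})\big)+C\|v\|_{L^2},
\]
valid for $v\in W^{1,q}(\Omega)$ with $q>3$ on the bounded smooth domain $\Omega$, together with the singular-integral estimate
\[
\|\nabla u\|_{\mathrm{BMO}}\le C\big(\|\div u\|_{L^\infty}+\|\curl u\|_{L^\infty}\big).
\]
Granting these, I would substitute the second into the first with $v=\nabla u$, control $\|\nabla u\|_{W^{1,q}}\le C(1+\|\nabla^2 u\|_{L^q}+\|\nabla u\|_{L^2})$ by Lemma \ref{lem-gn} and Lemma \ref{lem-vn}, and use $\ln(e+ab)\le\ln(e+a)+\ln(e+b)$ to collapse the logarithm into $\ln(e+\|\nabla^2 u\|_{L^q})$, after which the additive $\|\nabla u\|_{L^2}$ coming from the interpolation yields the stated form up to renaming constants.

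For the logarithmic interpolation inequality I would rely on the fact that for $q>3$ one has $W^{1,q}(\Omega)\hookrightarrow C^{0,1-3/q}(\Omega)$, so that the $L^\infty$ norm is dominated by the $\mathrm{BMO}$ seminorm times the logarithm of a higher Sobolev norm, with an additive low-frequency correction. The crucial structural point is that the $\|v\|_{L^2}$ contribution enters \emph{additively} and does not multiply the logarithm; this is exactly the mechanism producing the harmless $C\|\nabla u\|_{L^2}$ summand in the conclusion rather than a log-amplified one.

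For the $\mathrm{BMO}$ estimate I would exploit the identity
\[
-\Delta u=\nabla\times\curl u-\nabla\div u,
\]
so that $\nabla u$ is recovered from $\div u$ and $\curl u$ through the Green's operator of the Laplacian adapted to the slip conditions; the resulting map is a Calder\'on--Zygmund type operator, and such operators are bounded from $L^\infty$ into $\mathrm{BMO}$, while the smoothing (lower-order) part of the solution operator lands in a higher-regularity space and feeds only into the additive $\|\nabla u\|_{L^2}$ term. Concretely I would localize with a finite partition of unity: interior patches reduce to the classical Riesz-transform bound, and near $\partial\Omega$ I would flatten the boundary and use $u\cdot n=0$ together with $\curl u\times n=0$ to extend $u$ by a reflection preserving the $L^\infty$ control of $\div u$ and $\curl u$ across $\partial\Omega$, reducing the boundary estimate to the interior one.

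The main obstacle is precisely this boundary analysis. On the half-space model the reflection must be chosen so that the extended field still has divergence and curl bounded in $L^\infty$ by their original norms, which forces simultaneous use of both slip conditions---the vanishing of $u\cdot n$ governing the normal component and $\curl u\times n=0$ governing the tangential vorticity---and one must verify that the reflected solution operator remains genuinely Calder\'on--Zygmund so that the $L^\infty\to\mathrm{BMO}$ mapping survives up to $\partial\Omega$ without degrading $L^\infty$ into a stronger norm. Once this uniform-up-to-the-boundary $\mathrm{BMO}$ bound is secured, the remaining steps are the routine interpolation and constant absorption sketched above.
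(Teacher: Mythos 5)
The paper itself offers no proof of this lemma: it is quoted from Cai and Li \cite{cl2019}, whose argument (in the tradition of \cite{bkm} and \cite{hlx}) is the direct potential-theoretic one. There $u$ is split according to the two elliptic problems driven by $\nabla\div u$ and $\nabla\times\curl u$ under the slip conditions, and the kernel of the solution operator is integrated separately over $|x-y|<\delta$, $\delta<|x-y|<1$ and $|x-y|>1$; optimizing $\delta$ against $\|\nabla^2 u\|_{L^q}$ produces the logarithm multiplying exactly $\|\div u\|_{L^\infty}+\|\curl u\|_{L^\infty}$, while the far-field piece is estimated by Cauchy--Schwarz and yields the purely additive $C\|\nabla u\|_{L^2}$. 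Your route through $\mathrm{BMO}$ and a Kozono--Taniuchi type logarithmic inequality is a recognized alternative, but as sketched it has two genuine gaps.

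First, the up-to-the-boundary bound on $\|\nabla u\|_{\mathrm{BMO}}$ for the slip problem is the entire technical content of the lemma, and you only describe it. The flat-boundary reflection you have in mind (even extension of the tangential components, odd extension of the normal one, which keeps $\div u$ even and makes the tangential components of $\curl u$ odd and continuous across the boundary precisely because $\curl u\times n=0$ and $u\cdot n=0$) is the right one, but the curved-boundary commutators and the verification that the reflected solution operator remains Calder\'on--Zygmund are where all the work lies. Second, and more concretely, your final bookkeeping does not produce the stated inequality. After replacing $\|\nabla u\|_{W^{1,q}}$ by $C(\|\nabla^2u\|_{L^q}+\|\nabla u\|_{L^2})$ inside the logarithm, you are left with a term of the form $\|\nabla u\|_{\mathrm{BMO}}\,\ln(e+\|\nabla u\|_{L^2})$, in which the $L^2$ norm \emph{multiplies} the $\mathrm{BMO}$ norm rather than entering additively; your remark that the $L^2$ contribution ``does not multiply the logarithm'' applies to the additive correction in the interpolation inequality, not to this term. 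What your argument actually delivers is an estimate of the shape $\|\nabla u\|_{L^\infty}\le C\big(1+\|\div u\|_{L^\infty}+\|\curl u\|_{L^\infty}+\|\nabla u\|_{L^2}\big)\ln\big(e+\|\nabla^2 u\|_{L^q}+\|\nabla u\|_{L^2}\big)$, which would suffice for every application in this paper (where $\|\nabla u\|_{L^2}$ is a priori bounded) but is not the lemma as stated; the kernel-splitting proof is what gives the cleaner form in which the logarithm is weighted only by $\|\div u\|_{L^\infty}+\|\curl u\|_{L^\infty}$.
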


Finally, we consider the problem
\begin{equation}\label{divf}
\begin{cases}
{\rm div}v=f \,\,\,\,  in \,\,\Omega, \\
v=0\,\,\,\text{on}\,\,\,{\partial\Omega}.
\end{cases}
\end{equation}
One has the following conclusion (see \cite{GPG}, Theorem III.3.1).
\begin{lemma} \label{lem-divf}
There exists a linear operator operator $\mathcal{B} = [\mathcal{B}_1 , \mathcal{B}_2 , \mathcal{B}_3 ]$ enjoying
the properties:

1) $$\mathcal{B}:\{f\in L^p(\O)|\int_\O fdx=0\}\mapsto (W^{1,p}_0(\O))^3$$ is a bounded linear operator, that is,
\be \|\mathcal{B}[f]\|_{W^{1,p}_0(\O)}\le C(p)\|f\|_{L^p(\O)}, \mbox{ for any }p\in (1,\infty),\ee

2) The function $v = \mathcal{B}[f]$ solve the problem \eqref{divf}.

3) if $f$ can be written in the form $f = \div  g$ for a certain $g\in L^r(\O), g\cdot n|_{\pa\O}=0,$  then
\be \|\mathcal{B}[f]\|_{L^{r}(\O)}\le C(r)\|g\|_{L^r(\O)}, \mbox{ for any }r  \in (1,\infty).\ee
\end{lemma}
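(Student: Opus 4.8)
The plan is to prove Lemma~\ref{lem-divf} by first constructing $\mathcal{B}$ on domains star-shaped with respect to a ball, where an explicit integral formula is available, and then patching these local operators together over a finite cover of $\Omega$. I would begin by recalling that every bounded Lipschitz domain $\Omega\subset\r^3$ can be written as a finite union $\Omega=\bigcup_{i=1}^{m}\Omega_i$ in which each $\Omega_i$ is star-shaped with respect to an open ball $B_i\Subset\Omega_i$. Taking a partition of unity subordinate to this cover, one decomposes a mean-zero datum $f$ as $f=\sum_i f_i$ and then, by successively subtracting correction functions supported in the overlaps so as to redistribute the masses $\int_{\Omega_i}f_i\,dx$ along a connecting chain of subdomains, arranges that each corrected piece has zero mean on its own $\Omega_i$. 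This reduces everything to the case of a single star-shaped domain.

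On such a domain $\Omega_0$, star-shaped with respect to $B\Subset\Omega_0$, I would fix $\omega\in C_0^\infty(B)$ with $\int\omega\,dx=1$ and define, for $f$ with $\int_{\Omega_0}f\,dx=0$,
\begin{align}\label{prop-bog}
\mathcal{B}[f](x)\triangleq\int_{\Omega_0}N(x,y)f(y)\,dy,\qquad N(x,y)=\frac{x-y}{|x-y|^3}\int_{|x-y|}^{\infty}\omega\Big(y+\xi\,\frac{x-y}{|x-y|}\Big)\xi^2\,d\xi.
\end{align}
Differentiating under the integral and using $\int\omega=1$ gives $\div\mathcal{B}[f]=f$ in $\Omega_0$, while the compact support of $\omega$ together with the mean-zero hypothesis forces $v=\mathcal{B}[f]$ to vanish on $\partial\Omega_0$; this yields $v\in(W^{1,p}_0)^3$ and settles property~2.

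For the boundedness in property~1, I would compute $\nabla\mathcal{B}[f]$ and split it as a principal-value singular integral, whose kernel is homogeneous of degree $-3$ in $x-y$ and has vanishing mean on the unit sphere, plus a remainder with weakly singular kernel of order $-2$. The first operator is of Calder\'on--Zygmund type and the second maps $L^p\to L^p$ by Young's inequality, so the Calder\'on--Zygmund theory gives $\|\nabla\mathcal{B}[f]\|_{L^p}\le C\|f\|_{L^p}$ for every $p\in(1,\infty)$; since $v=0$ on $\partial\Omega_0$, Poincar\'e's inequality upgrades this to $\|\mathcal{B}[f]\|_{W^{1,p}_0}\le C\|f\|_{L^p}$. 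For property~3, where $f=\div g$ with $g\cdot n|_{\partial\Omega_0}=0$, I would substitute this into \eqref{prop-bog} and integrate by parts in $y$, transferring the divergence onto $N(x,y)$; the boundary integral vanishes precisely because $g\cdot n=0$, leaving $\mathcal{B}[\div g](x)=\int_{\Omega_0}K(x,y)g(y)\,dy$ with $K=\nabla_yN$ homogeneous of degree $-3$, i.e.\ again a Calder\'on--Zygmund kernel. Hence $\mathcal{B}\circ\div$ is bounded on $L^r$, $1<r<\infty$, which is exactly the estimate $\|\mathcal{B}[f]\|_{L^r}\le C\|g\|_{L^r}$.

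Finally I would reassemble the global operator from the local ones and verify that the correction functions introduced during the partition step are themselves bounded in $W^{1,p}_0$ (respectively $L^r$) by $\|f\|_{L^p}$ (respectively $\|g\|_{L^r}$), so that the three properties survive the finite summation. The main obstacle is the Calder\'on--Zygmund step: one must check that the kernels of $\nabla\mathcal{B}$ and of $\mathcal{B}\circ\div$ genuinely have the correct homogeneity, the requisite smoothness away from the diagonal, and the spherical cancellation needed for $L^p$ boundedness. A secondary difficulty is carrying out the reduction to star-shaped-with-respect-to-a-ball subdomains in a manner that preserves the zero-mean constraint on each localized datum, since without that constraint neither the solvability of $\div v=f$ nor the corresponding $L^p$ bound can hold.
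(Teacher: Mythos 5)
Your sketch is correct in outline and coincides with the paper's ``proof,'' which consists solely of the citation to Theorem III.3.1 of Galdi \cite{GPG}: the star-shaped decomposition with mass redistribution, the explicit Bogovskii kernel, the Calder\'on--Zygmund estimate for $\nabla\mathcal{B}$, and the integration by parts giving the $L^r$ bound for $\mathcal{B}\circ\div$ are exactly the arguments of that reference. The one slip is attributing the vanishing of $v=\mathcal{B}[f]$ on $\partial\Omega_0$ to the mean-zero hypothesis: the localization of $\operatorname{supp}v$ inside $\Omega_0$ follows from the star-shape geometry together with $\operatorname{supp}\omega\subset B$ (plus a density argument for general $f\in L^p$), while the mean-zero condition is what ensures $\div v=f$ rather than $f-\omega\int_{\Omega_0}f\,dy$.
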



\section*{Acknowledgements} This research was partially supported by National Natural Sciences Foundation of China No. 11671027, 11901025, 11971020, 11971217.







\end{document}